\documentclass[12pt]{amsart}
\usepackage[utf8]{inputenc}
\usepackage[dvipsnames]{xcolor}
\usepackage{fontenc,fancybox}
\usepackage{enumitem} 
\usepackage{tikzsymbols}
\usepackage{amsthm,amsmath,amsfonts,graphics,graphicx,geometry}
\usepackage{cite, enumitem, color, xargs, xifthen, fancyhdr, subcaption, wrapfig}
\usepackage{tikz, tkz-tab, pgfplots, float, caption, listings, mdframed}
\pgfplotsset{compat=1.18}
\usepackage{amssymb, amsmath, esvect, mathtools, siunitx}

\usepackage{stmaryrd}
\usepackage{dsfont} 
\usepackage[colorlinks=true, urlcolor=blue,linkcolor=blue, citecolor=blue]{hyperref}

\makeatletter

\@namedef{subjclassname@2010}{
  \textup{2010} Mathematics Subject Classification}

\makeatother
\newtheorem{thm}{Theorem}[section]

\newtheorem*{AssL}{Assumption (Law)}
\newtheorem*{AssT}{Assumption (Tightness)}

\newtheorem{cor}[thm]{Corollary}
\newtheorem{lem}[thm]{Lemma}
\newtheorem{pro}[thm]{Proposition}
\theoremstyle{definition}

\newtheorem{rem}{Remark}

\numberwithin{equation}{section}

\numberwithin{equation}{section}

\def\A{\mathcal{A}}

\def\F{\mathbb{F}}

\def\E{\mathbb{E}}

\def\N{\mathbb{N}}
\def\C{\mathbb{C}}
\def\Ecal{\mathcal{E}}
\def\Fcal{\mathcal{F}}
\def\Ccal{\mathcal{C}}
\def\Ncal{\mathcal{N}}

\def\Scal{\mathcal{S}}
\def\R{\mathbb{R}}
\def\P{\mathbb{P}}

\def\Z{\mathbb{Z}}
\def\Bi{\mathrm{Bi}}
\def\AS{\mathrm{AS}}
\def\Kl{\mathrm{Kl}}
\def\mod{\mathrm{mod}}
\def\B{\mathrm{S}}
\def\M{\mathcal{M}}
\def\Mcal{\mathcal{M}}
\def\Mp{C_\varphi}

\def\S{\mathrm{S}}

\def\X{\mathbb{X}}

\def\un{\mathds{1}}

\def\E{\mathbb{E}}

\title{On the distribution of the maximum of partial sums}

\newcommand{\newabstract}[1]{%
  \par\bigskip
  \csname otherlanguage*\endcsname{#1}%
  \csname captions#1\endcsname
  \item[\hskip\labelsep\scshape\abstractname.]
}

\title[The maximum of partial sums in families of trace functions]{The distribution and the structure of the maximum of partial sums in families of trace functions}

\author{Kilian Lebreton}

\address{Universit\'e de Lorraine, CNRS, IECL, 
F-54000 Nancy, France}

\email{kilian.lebreton@univ-lorraine.fr}

\begin{document}

\baselineskip=17pt


\begin{abstract} 
In \cite{ABL21}, Autissier, Bonolis and Lamzouri obtained uniform estimates for the distribution function of the maximum of partial sums of a class of families of $m$-periodic complex-valued functions under certain conditions. For example, these conditions are verified by Kloosterman or Birch sums.

In this article, under the same assumptions, we obtain an improved estimate for the tail of the distribution of the maximum of partial sums in these families, which gives for the first time an asymptotic formula for the logarithm of the distribution function, in a large uniform range. 

Furthermore, we prove a "structure theorem" for the maximum of partial sums of a family of complex $m$-periodic functions in our class, which shows that universally over this class, most of the partial sums with large norm are almost purely imaginary, and the maximum is attained around $m/2$. This is in sharp contrast with the results of Bober-Goldmakher-Granville-Koukoulopoulos \cite{BGGK18}, Lamzouri \cite{Lam24} and Lamzouri-Nath \cite{LN24} for the distribution of the maximum of character sums in various families of Dirichlet characters.
\end{abstract}

\subjclass[2020]{Primary 11L03, 11T23, 60F10;
Secondary 11L05, 60G50}

\maketitle

\section{Introduction}

\subsection{Introduction}
Let $m\geq 2$ be an integer, and $\varphi:\Z/m\Z\to \C$ a complex-valued function which we extend to an $m$-periodic function $\varphi:\Z\to \C$.   
Understanding the true size of the maximum of partial sums 
\[\M(\varphi) := 
\max_{t\in[0,1]}\left|
\frac{1}{\sqrt{m}}\sum_{0\leq n\leq mt} \varphi(n)
\right|,
\] is a notoriously difficult problem.
To study this question, we introduce the normalized discrete Fourier transform of $\varphi$.
For $h\in\Z$, it is defined by
\[
\widehat{\varphi}(h):= \frac{1}{\sqrt{m}}\sum_{n\in \Z/m\Z} \varphi(n)e_m(-hn),
\]
where $e_m(n) = e^{\frac{2\pi i}{m}n}$. The best general estimate for $\M(\varphi)$ remains the classical inequality of Pólya and Vinogradov (1918), which states that
\[
\M(\varphi) \ll ||\widehat{\varphi}||_\infty \log m .
\] 
In many situations arising in analytic number theory, one has bounds of the form
\[
|\widehat{\varphi}(h)|\leq \Mp \qquad \text{for all } h\in\Z,
\]
which typically follow from deep results in algebraic geometry such as the Weil bounds or Deligne’s equidistribution theorem.

In the case where $\varphi =\chi$ is a non-principal Dirichlet character modulo $m$, Montgomery and Vaughan \cite{MV77} obtained the stronger bound
\[
\M(\chi )\ll \log\log m,\] assuming the Generalized Riemann Hypothesis (GRH).
This bound is optimal by a result of Paley \cite{P32}, who obtained a corresponding lower bound for an infinite family of primitive quadratic characters.
More recently, for a primitive character $\chi$ modulo $m$ of fixed odd order $g$, the works \cite{GS07}, \cite{Gol12}, \cite{GL12}, \cite{LM22}, and \cite{Man26} led to the following improved bounds
\[
\M(\chi )
\ll 
\left\{
\begin{array}{l cc l}
         (\log m)^{\beta_g} (\log_2 m)^{-1/4+o(1)}& \text{unconditionally},\\
     (\log_2 m)^{\beta_g}
(\log_3 m)^{-1/4}
(\log_5 m)^{1-\beta_g} & \text{assuming GRH}. 
\end{array}
\right.
\]
where $\beta_g := \dfrac{g}{\pi}\sin\left(\dfrac{\pi}{g}\right)$ and
$\log_{k+1} m := \log(\log_k m)$ for $k\geq 1$.
Moreover, Mangerel \cite{Man26} unconditionally obtained a matching lower bound of the above conditional bound, proving the correct order of magnitude under GRH.

Let $p\geq 3$ be a prime and let $\mathbb{F}_p$ denote the finite field of $p$ elements. Livné \cite{Liv87} proved that the complete normalized Birch sums
\[
\Bi_p(a):= \frac{1}{\sqrt{p}}\sum_{n\in \F_p} e_p(n^3+an),
\]as $a$ varies over $\F_p$ with the uniform probability measure, become equidistributed in $[-2,2]$ with respect to the Sato--Tate measure, 
\[
    \frac{1}{\pi}\sqrt{1-\frac{x^2}{4}}dx,
    \]
as $p$ tends to $\infty$. 
In \cite{KS16}, Kowalski and Sawin investigated the following partial Birch sums. 
For $t \in [0,1]$, they define
\[
\Bi_p(a, t):= \frac{1}{\sqrt{p}}\sum_{0\leq n\leq pt} e_p(n^3+an).
\]
They also consider the associated continuous polygonal path 
$\big(\widetilde{\Bi}_p(a,t)\big)_{t\in[0,1]}$, obtained by linear interpolation 
between consecutive partial sums. More precisely, setting  
$z_k=\Bi_p(a,k/p)$ for $0\leq k\leq p$, the segment $[z_k,z_{k+1}]$ 
is parametrized linearly for $t\in[k/p,(k+1)/p]$, that is
\[
\widetilde{\Bi}_p(a,t)
= \frac{1}{\sqrt{p}}\sum_{0\leq n\leq \lfloor pt \rfloor}\varphi_a(n)
+ \frac{1}{\sqrt{p}}\big(pt-\lfloor pt\rfloor\big)\varphi_a(\lfloor pt\rfloor+1).
\]
Kowalski and Sawin
proved that as $a$ varies in $\F_p$, the process $(\overset{\sim}{\Bi_p}(a,t))_{t\in[0,1]}$ converges in law in $\Ccal([0,1])$ as $p$ tends to $\infty$ to the process $(\S_\X(t))_{t\in [0,1]}$, defined for $t\in[0,1]$ by
\begin{equation}\label{def S_X(t)}
\S_\X(t) := t \X(0)+ \sum_{h\in\Z^*} \frac{e^{2\pi i ht}-1}{2\pi i h} \X(h),
\end{equation}
where $(\X(h))_{h\in\Z}$ is a sequence of independent random variables with the Sato--Tate measure on $[-2,2]$.
This means that for any bounded map $F : \Ccal([0,1]) \to \C$ continuous with respect to the topology of the uniform convergence, we have 
\[
\underset{p \to \infty }{\lim}\E[F(\overset{\sim}{\Bi_p}(a,\cdot) )] = \E [ F(\S_\X) ].
\] 
Furthermore, by taking $F = ||\cdot||_\infty$, Kowalski and Sawin showed in \cite{KS16} that
there exists a positive constant $c$ such that we have for all $V>0$,
\begin{align*}
c^{-1}\exp(-\exp(cV))\leq
& \underset{p\rightarrow \infty}{\lim} \frac{1}{p}\left| \Big\{
a\in \F_p 
\:: \
||\Bi_p(a,\cdot) ||_\infty \geq V
\Big\}\right| 
\leq c\,\exp(-\exp(V/c)).
\end{align*}
This suggests that the maximum of partial sums can be as large as $c\log\log p$. In \cite{Lam20}, Lamzouri improved this result, showing that for all real numbers $1\leq V\leq \frac{2}{\pi}\log_2 p-2\log_3 p$, we have:
\begin{align}\label{Lamzouri estimate}
\exp\left(-A_0\exp\left(\frac{\pi}{2}V\right)\left( 1+O\left(\sqrt{V}e^{-\pi V/4}\right)\right)\right)&\leq 
\frac{1}{p}\left| \Big\{
a\in \F_p 
\:: \
||\Bi_p(a,\cdot) ||_\infty \geq V
\Big\}\right| \\
\label{Lamzouri estimate2}
&\leq \exp\left(-C\exp\left(\left(\frac{\pi}{2}-\delta\right)V\right)\right).
\end{align}
where
$
\delta = \frac{4\pi-\pi^2}{2\pi+8},
$
\(C\) is a positive constant and
$
A_0:=\exp\left(
-\gamma-1-\frac12\int_0^\infty \frac{f_{\X}(u)}{u^2}\,du
\right),
$
with \(f_{\X}\) defined in \eqref{def f_X} for a Sato--Tate random variable \(\X\), and \(\gamma\) denoting the Euler--Mascheroni constant.
An immediate consequence of the lower bound \eqref{Lamzouri estimate} is the existence of $a\in\F_p$ such that
\[\left|\frac{1}{\sqrt{p}} \sum_{0\leq n\leq p/2} e_p(n^3+an)\right| \geq \left(\frac{2}{\pi}+o(1)\right)\log\log p.
 \]

We should note that Bonolis \cite{Bon22} obtained this result with a smaller constant, using a different method. In \cite{ABL21}, Autissier, Bonolis and Lamzouri improved the upper bound \eqref{Lamzouri estimate2} for the distribution function, showing that there exists a constant $V_0$ such that for all real numbers $V_0 \leq V \leq \frac{2}{\pi}(\log_2 p- 2\log_3 p) -V_0$ we have
\[\frac{1}{p}\left| \Big\{
a\in \F_p 
\:: \
||\Bi_p(a,\cdot) ||_\infty \geq V
\Big\}\right| =
\exp\left(-\exp\left(\frac{\pi }{2} V
+O(1)\right) \right). 
\]
As a corollary of our general Theorem \ref{THM Ncal_m(V)} below, we improve this result, showing that the lower bound \eqref{Lamzouri estimate} gives the correct order of magnitude of this distribution function. More precisely, for the same range of values of $V$, one has
\[\frac{1}{p}\left| \Big\{
a\in \F_p 
\:: \
||\Bi_p(a,\cdot) ||_\infty \geq V
\Big\}\right| = \exp\left(-A_0\exp\left(\frac{\pi}{2}V\right)\left( 1+O\left(\frac{1}{V}\right)\right)\right). 
\]

\subsection{Main results}

Let $\{\varphi_a\}_{a\in\Omega_m}$ be a family of $m-$periodic 
complex-valued functions. We define the partial sums of these $\varphi_a$, for all $a\in\Omega_m$, by
\begin{equation}\label{def S_m(a,t)}
\B_m(a,t):= \frac{1}{\sqrt{m}}\sum_{0\leq n \leq mt} \varphi_a(n).
\end{equation}
By the discrete Plancherel formula, the partial sums can be expressed in terms of complete sums, as follows:
\begin{equation}\label{S_m(a,t) fourier transform}
    \B_m(a,t)= \sum_{h\in\Z/m\Z}
    \gamma_m(h,t)\widehat{\varphi_a}(h), \qquad \text{where}\;\;\gamma_m(h,t) 
:= \frac{1}{m}\sum_{0\leq k \leq mt}  e_m(hk).
\end{equation}

We now assume that for all $h \in \Z $ the distribution of $\widehat{\varphi_a}(h)$ converges to that of a bounded real random variable $\X(h)$, as $a$ varies over $ \Omega_m$ and $ m$ tends to $\infty$. More precisely, we assume the following conditions, corresponding to Assumptions $2$ and $3$ with $\eta = 1/2$ in \cite{ABL21}. The only difference is the inclusion of the index $h=0$. We describe several situations where these conditions are satisfied.
\begin{AssL}\label{Ass Law}
    \hypertarget{Ass Law}{The convergence in law of the Fourier transform:}\\ There exist $\Mp>0$ and a random variable $\X$ on $[-\Mp,\Mp]$ such that
\begin{itemize}
    \item $\X$ is symmetric, that is, $\X \sim -\X$, where $\sim$ denotes equality in distribution.

    \item For all $\varepsilon >0$, we have $\P(\X>\Mp-\varepsilon)\gg \varepsilon^{C_2},$ for some positive constant $C_2$.

    \item For all $a\in\Omega_m$ and $h\in \Z /m\Z,$ we have $\widehat{\varphi_a}(h)\in [-\Mp,\Mp].$

    \item Let $\{\X(h)\}_{h\in\Z}$ be a sequence of independent and identically distributed random variables with the same distribution as $\X$. There exists an absolute constant $C_1>1$, such that for all positive integers $k\leq \log m/\log\log m,$ and all $(h_1, ...,h_k)\in (-m/2, m/2]^k,$ we have
\begin{equation}\label{eq moment Ass Law}
\frac{1}{|\Omega_m|}\sum_{a\in\Omega_m} \widehat{\varphi_a}(h_1)\cdots \widehat{\varphi_a}(h_k)
 =
 \E[\X(h_1)\cdots \X(h_k)] +O\left(\frac{C_1^k}{m^{1/2}}\right).
\end{equation}

\end{itemize}
\end{AssL}

Assumption (\hyperlink{Ass Law}{Law}) appears naturally
for many families of $\ell-$adic trace functions, thanks to several ingredients from algebraic geometry, notably Deligne's equidistribution theorem, Katz's work on monodromy groups and the Goursat-Kolchin-Ribet criterion.  
Under Assumption (\hyperlink{Ass Law}{Law}), we can already obtain information on the distribution of the maximum of the partial sums at a fixed $t_0 \in [0,1]$, namely
\begin{equation}\label{def Nm(t_0, V)}
   N_m(t_0, V) := \frac{1}{|\Omega_m|}\left|\left\{
a\in \Omega_m\;\; \big|\;\; \left|
\frac{1}{\sqrt{m}}\sum_{0\leq n\leq t_0 m} \varphi_a(n)
\right|> V
\right\}\right|. 
\end{equation}
For instance, by assuming Assumption (\hyperlink{Ass Law}{Law}),  it follows from \cite[Theorem 7.1]{ABL21} that there exists a constant $V_0$ such that 
for all \(V\) satisfying $1\leq V \leq$
$\frac{\Mp}{\pi} \left(\log_2 m-2\log_3 m\right)-V_0$, we have
\begin{equation}\label{eq ABL Nm(1/2,V)}
N_m(1/2,V)
=
\exp\left[- \frac{\Mp}{2 e} \exp\left(-\gamma-I_\X \right)\exp\left(\frac{\pi }{\Mp}V\right)
\left(1+O\left(\frac{1}{V}\right)\right) 
\right].
\end{equation}
where $
I_\X $ is defined by
\begin{equation}\label{def f_X}
I_\X :=\frac{1}{\Mp}\int_0^\infty \frac{f_{\X}(u)}{u^2}du, \qquad
f_\X(u) :=\left\{
\begin{array}{l cc l}
         \log \E[e^{u\X}] & \text{if}  & |u|<1,\\
     \log \E[e^{u\X}]- \Mp|u| & \text{if}  & |u|\geq 1. 
\end{array}
\right.
\end{equation}
In particular, this yields the following lower bound for the distribution function of the maximum $\M(\varphi_a)$, as $m$ tends to $\infty$,
\[\Ncal_m(V)\geq \exp\left[- \frac{\Mp}{2 e} \exp\left(-\gamma-I_\X \right)\exp\left(\frac{\pi }{\Mp}V\right)
\left(1+O\left(\frac{1}{V}\right)\right) 
\right],\] 
where
\[
\Ncal_m(V):=
\frac{1}{|\Omega_m|}\left|\left\{
a\in \Omega_m\;\; \big|\;\;\M(\varphi_a)> V
\right\}\right|.
\]
However, to obtain a matching upper bound, we need another assumption.
\begin{AssT}\label{Ass Tightness} \hypertarget{Ass Tightness}{
There exists $\alpha\geq 1$ and $0<\delta<1/4$ such that:}
\[
\frac{1}{|\Omega_m|}\sum_{a\in\Omega_m}
\max_{|I|\leq m^{1/2+\delta}}\left|
\frac{1}{\sqrt{m}}\sum_{n\in I} \varphi_a(n)
\right|^{\alpha } \ll
m^{-\delta}.
\]
\end{AssT}
When the cardinality of the interval $I$ is close to $\sqrt{m}$, the trivial bound is no longer effective, and the $O(1/\sqrt{m})$ error term in the probabilistic approximation is not sufficient to conclude. 
Proposition \ref{Pro Assumptions implication} shows that each of the three sets of assumptions considered in \cite{ABL21} implies Assumption (\hyperlink{Ass Tightness}{Tightness}).
As a consequence, the results of Autissier, Bonolis and Lamzouri \cite{ABL21}, together
with the translation-invariance of the families considered (see Remark \ref{rem hat varphi_a(h) = hat varphi a-h(0)}), show
that Assumptions (\hyperlink{Ass Law}{Law}) and (\hyperlink{Ass Tightness}{Tightness}) are satisfied by many families of trace
functions when $m = p$ is a large prime:
\begin{itemize}
    \item The classical Kloosterman sums, for $(a,b)\in (\F_p^*)^2 = \Omega_p $ and $n\in \F_p^*,\, \varphi_{a,b}(n) = e_p(an+b\overline{n})$ where $\overline{n}$ denotes the inverse of $n $ modulo $p$, and by convention $\varphi_{a,b}(0)=0$. Thus, we have
    \[
    \Kl_{p}(a,b) = \frac{1}{\sqrt{p}}\sum_{n\in \F_p^*}
    e_p(an+b\overline{n}).
    \] For this family, the limiting distribution
    $\X$ is a random variable with distribution equal to the Sato--Tate measure on $[-2,2]$.
Fixing \(b=1\) yields the corresponding one-parameter family, which satisfies Assumption~(\hyperlink{Ass Law}{Law}), whereas Assumption~(\hyperlink{Ass Tightness}{Tightness}) remains open in this case; see Remark 3.4 in \cite{KS16}.
    
    \item The Birch sums, for \(a\in\F_p=\Omega_p\), let
\(\varphi_a(n)=e_p(n^3+an)\). Then
    \[
    \Bi_p(a) = \frac{1}{\sqrt{p}}\sum_{n\in \F_p}  e_p(n^3 + an).
    \]
    where the limiting distribution $\X$ is a random variable with the Sato--Tate distribution on $[-2,2]$.

    \item The Artin-Schreier sums. Let $g\in\Z[T]$ be an odd polynomial of degree $2r+1\geq 3$. For any prime $p> 2r+1$ and for any $a\in  \F_p^*  = \Omega_p$, we let $\varphi_a(n) = e_p(g(n) + an)$ and
    \[
    \AS_p(a) = \frac{1}{\sqrt{p}}
    \sum_{n\in \F_p}  e_p(g(n) + an).
    \]

    \item The twisted $r$-th hyper-Kloosterman sums, for $r\geq 2$ and $(a,b) \in (\F_p^*)^2 = \Omega_p,$ 
    \[
    \varphi_{a,b}(n) := \frac{1}{p^{(r-1)/2}}
    \;\left(
    \sum_{x_1,...,x_{r-1}\in\F_p^*}
    e_p(x_1+...+x_{r-1}+ 
    \overline{an x_1\cdots x_{r-1}})\right)e_p(bn).
    \]
    and
    \[
    \Kl_{p,r}(a,b) = \frac{1}{p^{r/2}}\sum_{n = 1}^{p-1}
    \;\left(
    \sum_{x_1,...,x_{r-1}\in\F_p^*}
    e_p(x_1+...+x_{r-1}+ 
    \overline{an x_1\cdots x_{r-1}})\right)e_p(bn).
    \]

    \item In general, the true order of magnitude of the maximum of partial sums in these families remains unknown. However, in \cite{ABL21}, the authors construct a family $\{\varphi_a\}_{a\in\Omega_m}$ of $m$-periodic complex-valued functions satisfying Assumptions (\hyperlink{Ass Law}{Law}) and (\hyperlink{Ass Tightness}{Tightness}) for which
\[
\frac{1}{\sqrt{m}}\sum_{0\leq n\leq m/2}\varphi_1(n)
=
\frac{1}{\pi i}\log m + O(1).
\]
Thus, some normalized partial sums of
\(\{\varphi_a\}_{a\in\Omega_m}\) may be as large as \(\log m\), illustrating the difficulty of improving the Pólya–Vinogradov inequality in complete generality.

\item We may also add the following example, which does not appear in \cite{ABL21}: the family of quadratic Gauss sums associated with $\chi=\left(\frac{\cdot}{p}\right)$. For all $a\in \Omega_p= \F_p^*$, we define
    \[
    \varphi_a(n) =\frac{\sqrt{p}}{\tau(\chi)} \left(\frac{n}{p}\right)e_p(an),
    \]
    where $\tau(\chi) = \sum_{n=1}^{p-1} \left(\frac{n}{p}\right)e_p(n)$.
\end{itemize}

In \cite[Theorem 1.2]{ABL21}, the authors prove that if  $\{ \varphi_a\}_{a\in\Omega_m}$ satisfies certain assumptions that are roughly equivalent to Assumptions (\hyperlink{Ass Law}{Law}) and (\hyperlink{Ass Tightness}{Tightness}), there exists a constant $V_0$ such that for all \(V\) satisfying $V_0 \leq V \leq (\Mp/\pi )(\log_2 m- 2\log_3 m) -V_0$ we have
\[\Ncal_m(V)=
\exp\left(-\exp\left(\frac{\pi }{\Mp}V
+O(1)\right) \right). 
\]
In this article, we improve this result by obtaining a very accurate estimate for $\Ncal_m(V)$, which in particular leads to an asymptotic formula for $\log \Ncal_m(V)$ in a similar range of $V$.

\begin{thm}\label{THM Ncal_m(V)} Let $m$ be large enough and $\{\varphi_a\}_{a\in\Omega_m}$ be a family of $m-$periodic complex-valued functions satisfying Assumptions (\hyperlink{Ass Law}{Law}) and (\hyperlink{Ass Tightness}{Tightness}) above. There exists a constant $ V_0>0$ depending only on $\X$ such that for all \(V\) satisfying
\[ V_0\leq V \leq \frac{\Mp}{\pi} \left(
\log\log m-2\log\log\log m\right)-V_0,\] 
we have 
\begin{align*}
\Ncal_m(V)
&=
\exp\left[- \frac{\Mp}{2 e} \exp\left(-\gamma-I_\X \right)\exp\left(\frac{\pi }{\Mp}V\right)
\left(1+O\left(\frac{1}{V}\right)\right) \right],
\end{align*}
where $\gamma$ is the Euler-Mascheroni constant and  $
I_\X $ is defined in \eqref{def f_X}.

\end{thm}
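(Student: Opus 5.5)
The lower bound is already in hand: since $\Ncal_m(V)\geq N_m(1/2,V)$, estimate \eqref{eq ABL Nm(1/2,V)} from \cite[Theorem 7.1]{ABL21} gives it directly in the stated range. The work is the matching upper bound. The plan is to show that the event $\M(\varphi_a)>V$ is, up to a negligible set, covered by events in which the partial sum at some $t$ very close to $1/2$ exceeds $V-O(1/V)$. On the probabilistic side, the moment hypothesis \eqref{eq moment Ass Law} transfers everything to the random model $\S_\X(t)$ in \eqref{def S_X(t)}.

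\textbf{Step 1 (reduction to a discrete set and to the model).} Using \eqref{S_m(a,t) fourier transform}, I truncate the Fourier expansion at $|h|\leq H$ with $H=m^{1/2-\delta'}$. I split $\gamma_m(h,t)$ into its main part $(e(ht)-1)/(2\pi i h)$ and an error term. I then use Assumption (\hyperlink{Ass Tightness}{Tightness}) to control variations of $\B_m(a,t)$ over short intervals of length $\leq m^{1/2+\delta}$, and a Markov-type bound for the tail $\sum_{|h|>H}$. With $\alpha$-th moments this reduces $\max_t|\B_m(a,t)|$ to a maximum over a grid $\mathcal{T}$ of $\ll m^{1/2-\delta}$ points. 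The cost is $V\mapsto V-m^{-\epsilon}$ plus an exceptional set of measure $m^{-\epsilon}$, which is far smaller than $\exp(-\exp(\pi V/\Mp))$ in the given range since $V\leq (\Mp/\pi)(\log_2m-2\log_3m)$. Next, for each grid point and each direction $\theta$, I compute high moments $\E|\mathrm{Re}(e^{-i\theta}\cdot)|^{2k}$ of the truncated sum with $k\asymp e^{\pi V/\Mp}\leq \log m/\log_2 m$. Via \eqref{eq moment Ass Law} these equal the corresponding moments of $\S_\X^{(H)}(t)$ up to $C_1^{2k}H^{2k}m^{-1/2}$. I expect the smaller truncation $H\approx \log m$-powers, as in \cite{ABL21}, to be needed here so that this error is admissible. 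This is essentially the ABL machinery, and I would import it.

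\textbf{Step 2 (structure of large values in the model).} For the random model I need a uniform saddle-point estimate for $\P(\mathrm{Re}(e^{-i\theta}\S_\X(t))>V)$. Write $\mathrm{Re}(e^{-i\theta}\S_\X(t))=\sum_h c_h(t,\theta)\X(h)$ with $c_h=\sin(\pi h t)\cos(\pi h t-\theta)/(\pi h)$-type coefficients, up to the $h=0$ term. The Laplace transform is $\exp(\sum_h f$-type terms$)$, and the growth rate is governed by $\sum_h \Mp|c_h|$ at scale $u$, i.e. by $\frac{\Mp}{\pi}\cdot\frac{1}{N}\sum_{h\leq N}|\sin(\pi h t)\cos(\pi ht-\theta)|\cdot \log N$. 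This average is maximized, with value $\frac{\Mp}{\pi}\log N$ times the maximal mean, exactly at $t=1/2,\theta=\pi/2$. There the coefficients are $|c_h|=1/(\pi h)$ for odd $h$ and $0$ for even $h$, which gives the constant $\frac{\Mp}{2e}e^{-\gamma-I_\X}$. For $(t,\theta)$ at distance $\eta$ from $(1/2,\pi/2)$, the log-Laplace transform at $u\asymp e^{\pi V/\Mp}$ loses a factor $\exp(-c\eta^2\log u)$-ish in the leading coefficient, or equivalently the tail is at most $\exp(-(1+c\eta)\,A e^{\pi V/\Mp})$. For rational $t=b/q$ with small $q$, the averages of $|\sin(\pi h b/q)|$ equal explicit constants $<2/\pi$ unless $q=2$; for generic $t$ they approach $4/\pi^2<2/\pi$ by equidistribution.

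\textbf{Step 3 (union bound) and the main obstacle.} Cover $\mathcal{T}\times[0,2\pi)$ with $O(V^{C})$ directions near $\theta=\pi/2$ and use $|z|\leq \max_j \mathrm{Re}(e^{-i\theta_j}z)(1+O(V^{-2}))$. Points far from $(1/2,\pm\pi/2)$ contribute $\ll m\exp(-(1+c)Ae^{\pi V/\Mp})$, which is negligible because $e^{\pi V/\Mp}\geq \log m\cdot$ something only when $V$ is large. I would instead first localize $t$ using the short-interval argument, since the partial sum is Lipschitz at scale $1/H$ in $t$ up to tightness errors, so only $O(H)$ points matter. The points within $1/V$-scale windows of $1/2$ contribute $V^{C}\exp(-Ae^{\pi V/\Mp}(1+O(1/V)))$, and the polynomial factor is absorbed into the $O(1/V)$. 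The hardest step is the uniform, quantitative version of Step 2: I need an upper bound for the model tail, uniform over all $t\in[0,1]$ and $\theta$, with a gain $(1+c\,\mathrm{dist}^2)$ in the exponent's constant sufficient to beat the $O(H)$ union bound. Near $t=1/2$ I would do a direct Taylor expansion of $\sum_h \Mp|c_h|$ in $(t-1/2,\theta-\pi/2)$. Elsewhere I would use Diophantine case analysis, with small-denominator rationals handled explicitly and all others by equidistribution of $ht$ modulo 1 over $h\leq N$.
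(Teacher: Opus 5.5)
Your lower bound matches the paper's (via $N_m(1/2,V)$ and \cite[Theorem 7.1]{ABL21}). The upper bound, however, has a genuine gap in Step~2, which is where the whole difficulty lies.

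Your heuristic for the model is wrong. In your normalization, at $\theta=\pi/2$ one has $c_h=\sin^2(\pi h t)/(\pi h)$, and $\frac1N\sum_{h\le N}\sin^2(\pi h t)\to\frac12$ for every $t\notin\{0,1\}$. This is exactly the mean at $t=1/2$, where only odd $h$ contribute. So the leading term $\frac{\Mp}{\pi}s\log s$ is the same along the whole line $\theta=\pi/2$, and moving $t$ away from $1/2$ costs nothing in the leading coefficient. The averages of $|\sin(\pi h t)|$ you quote are not the relevant quantity; at $t=1/2$ that average is $1/2<2/\pi$. Any gain lives in the linear term: Theorem~\ref{THM asymp t theta} gets $C_3 s|t-1/2|^4$.

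Consequently the upper bound in Theorem~\ref{THM Ncal_m(V)} needs, uniformly in $(t,\theta)$,
$\sup_{t,\theta}\log\E[\exp(s\sum_h\gamma(h,t,\theta)\X(h))]\le\frac{\Mp}{\pi}s\log s+\kappa s+O(s/\log s)$.
This is a comparison with $(1/2,1/2)$ at the level of the constant $\kappa$, which involves the nonlinear part $f_\X$ (through $I_\X$), not just $\sum_h\Mp|c_h|$. Neither of your tools can deliver it. A Taylor expansion in $t$ is valid only for $|t-1/2|\ll 1/h$, i.e.\ $\ll 1/s$ at the relevant frequencies. Equidistribution of $ht$ modulo $1$ gives errors that are not uniform near rationals, let alone $O(s/\log s)$.

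The missing idea is the convexity comparison. Take $g(x)=\log\E[e^{sx\X/\pi}]$, which is even, convex, with $g(0)=0$. Then $g(a_h/|h|)\le a_h g(1/|h|)$, and the even-frequency terms are bounded by convex interpolation between $g(\frac1{2n-1})$ and $g(\frac1{2n+1})$. Abel summation against the nonnegative weights $(2N-1)g(\frac1{2N-1})-(2N+1)g(\frac1{2N+1})$ then reduces everything to the deterministic inequality $S_{2N}(t,\theta)\ge0$ (Propositions~\ref{pro conv methode} and~\ref{thm S_2N > marge (t,theta)}). That inequality is uniform in $(t,\theta)$ and independent of $\X$ and $s$.

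Step~3 also fails as written. With a $t$-grid of $O(H)$ points, $H$ a fixed power of $\log m$, the union bound costs $\asymp\log\log m$ in the exponent. For $V$ near $V_0$ the main exponent $e^{\pi V/\Mp}$ is bounded, so no gain of size $O(s)$ away from $(1/2,1/2)$ can absorb that cost. Also, ``$V^C$ points in a $1/V$-window'' is incompatible with a Lipschitz scale $1/H$.

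The paper (Proposition~\ref{Pro final canonic unif T,Theta}) instead takes a $V$-dependent cutoff $H_0$ with $H_0^{1/2}\asymp s(\log H_0)^2$. It discards the multiscale exceptional set $\Ecal_m(H_0)$, built from $E^0_m,E^1_m,E^2_m,E_m(H_j)$, of measure $\ll\exp(-\delta H_0^{1/2}/2)$. It then needs only $H_0^2=e^{O(V)}$ grid points in $(t,\theta)$, a cost negligible against $e^{\pi V/\Mp}/V$. In that setup no gain away from $(1/2,1/2)$ is needed for Theorem~\ref{THM Ncal_m(V)} at all, only the uniform bound with constant $\kappa$. The gain is used only for Theorem~\ref{thm structure simp}.
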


\begin{rem}
Let $C_0 := \lim_{m\to \infty} \frac{C_\varphi}{\pi}\left(\log \left(\frac{2 \log |\Omega_m| }{\Mp \log m}\right)
+1+\gamma+I_\X\right)$.
If the range of validity of $V$ in Theorem \ref{THM Ncal_m(V)} extends to $ (\Mp/\pi)\log\log m+C_0+\varepsilon$, for $\varepsilon>0$, we would obtain
\begin{align*}\sup_{a\in \Omega_m}\Mcal(\varphi_a)&
= \frac{C_\varphi}{\pi}\log\log m +\frac{C_\varphi}{\pi}\left(\log 2- \log  \Mp 
+1+\gamma+I_\X\right) \\&
+\frac{C_\varphi}{\pi}\log \left(\frac{\log |\Omega_m| }{\log m}\right)
+O\left(\frac{1}{\log\log m}\right).
\end{align*}  

\end{rem}

For each \(a\in \Omega_m\), we define \(t_a \in [0,1]\) and \(\theta_a \in [0,1]\) such that
\[\M(\varphi_a) = 
\max_{t\in[0,1]}\max_{\theta\in[0,1]}\left|\Re\left(
\frac{e^{\pi i\theta}}{\sqrt{m}}\sum_{0\leq n\leq mt} \varphi_a(n)\right)
\right| = \left|\Re\left(
\frac{e^{\pi i\theta_a}}{\sqrt{m}}\sum_{0\leq n\leq m t_a} \varphi_a(n)\right)\right|.
\]

In the regime where the maximum is large, the following structure theorem describes precisely the location of \(t_a\) and \(\theta_a\) for the vast majority of elements \(a\in \Omega_m\).

\begin{thm}\label{thm structure simp} Assume Assumptions (\hyperlink{Ass Law}{Law}) and (\hyperlink{Ass Tightness}{Tightness}). There exists a constant $ V_0>0$ depending only on $\X$ such that for all \(V\) satisfying $V_0\leq V \leq$
$ \frac{\Mp}{\pi}(\log_2 m-2\log_3 m)-V_0$, there exists a subset $\A_m(V)\subset \left\{
a\in \Omega_m\: \big|\:\M(\varphi_a)> V
\right\}$ and a constant $C>0$ such that:
\[
\left|\A_m(V) \right|= \left|\left\{
a\in \Omega_m\;\big|\;\M(\varphi_a)> V
\right\}\right|
\left[1 +O\left(\exp\left(- 
\frac{Ce^{\pi V/\Mp}}{V^{4/5}}
\right)\right)\right],
\]
and such that for all $ a \in \A_m(V),$ we have
\begin{enumerate}
    \item $\M(\varphi_a)=V+ O\left(V^{-4/5}\right)$,
    \item $t_a = 1/2+ O\left(V^{-1/5}\right)$,
    \item $\theta_a = 1/2+ O\left(V^{-4/5}\right)$.
\end{enumerate}

\end{thm}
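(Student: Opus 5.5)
The plan is to transfer the question to the random model and prove the structure statement there, then pass back to $\Omega_m$ using the moment matching of Assumption (\hyperlink{Ass Law}{Law}) and the short-interval control of Assumption (\hyperlink{Ass Tightness}{Tightness}). For the random model $\S_\X(t)$, rotating by $e^{\pi i\theta}$ and taking real parts gives
\[
\Re\big(e^{\pi i\theta}\S_\X(t)\big)=\sum_h c_h(t,\theta)\X(h),
\]
with explicit real coefficients. At $t=1/2$, $\theta=1/2$ one has $c_h=\frac{1-\cos(\pi h)}{2\pi h}$, which equals $\frac{1}{\pi h}$ for $h$ odd and $0$ for $h$ even; this is the configuration behind \eqref{eq ABL Nm(1/2,V)}. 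The key quantitative input is a saddle-point/large deviation estimate for $\P(\Re(e^{\pi i\theta}\S_\X(t))>V)$, uniform in $(t,\theta)$. I would obtain it via the Laplace transform
\[
\E\exp\Big(s\sum_h c_h(t,\theta)\X(h)\Big)=\exp\Big(\sum_h \log\E[e^{sc_h\X}]\Big).
\]
Since $\log \E[e^{u\X}]=\Mp|u|+f_\X(u)$ for $|u|\ge1$, the exponent is governed by $\Mp s\sum_{|c_h|\ge 1/s}|c_h|$. This sum equals $\frac{\Mp s}{\pi}\log s$ plus a constant times $s$, and the leading coefficient $1/\pi$ is maximised only when the coefficients have the harmonic profile $1/(\pi h)$ on a set of density one-half, that is, at $(t,\theta)=(1/2,1/2)$.

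The main step is a deficiency estimate. Writing $L(t,\theta)=\limsup_{y\to\infty}\frac{1}{\log y}\sum_{|h|\le y}|c_h(t,\theta)|$, I would show
\[
L(1/2,1/2)-L(t,\theta)\gg |t-1/2|^2+|\theta-1/2|^2 .
\]
Actually the truncated sums $\sum_{|h|\le y}|c_h|$ must be compared with the $y$ relevant at the saddle point $s\asymp e^{\pi V/\Mp}$, and the deficiency at scale $y$ is what matters. Plugging this into Chernoff's bound gives
\[
\P\big(\Re(e^{\pi i\theta}\S_\X(t))>V\big)\le \P(\text{at }(1/2,1/2))\cdot\exp\big(-c\,e^{\pi V/\Mp}\,\Delta(t,\theta)\big).
\]
Here $\Delta\asymp|t-1/2|^2\log(\cdot)$ for $\theta$-deviations and a corresponding amount for $t$-deviations. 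The exponents $V^{-1/5}$ and $V^{-4/5}$ are what result from balancing this loss against the entropy of a net, together with the $O(1/V)$ relative error in Theorem \ref{THM Ncal_m(V)}. I expect this exponent bookkeeping to be the main obstacle: making the deficiency estimate uniform and sharp enough in both variables. The weaker $t$-localisation reflects that moving $t$ only changes the phases $e^{2\pi i h t}$ slowly for small $h$.

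Concretely, I would proceed in four steps.
\begin{enumerate}
\item Discretise $(t,\theta)$ on a grid of mesh $m^{-1/2-\delta}$ (or coarser, using Lipschitz control of the model). Tightness handles the gaps between grid points on the arithmetic side, exactly as in the proof of Theorem \ref{THM Ncal_m(V)}.
\item Bound $\P(\exists (t,\theta)\text{ with } |t-1/2|>V^{-1/5}\text{ or }|\theta-1/2|>V^{-4/5} : \Re(\cdot)>V)$ by a union bound over the grid. The deficiency saves a factor $\exp(-Ce^{\pi V/\Mp}V^{-4/5})$ relative to $\Ncal_m(V)$, which absorbs the polynomial number of grid points.
\item Transfer these probabilities to $\Omega_m$ via moments of order $k\asymp \log m/\log\log m$, as in \cite{ABL21}. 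This step forces the restriction $V\le \frac{\Mp}{\pi}(\log_2 m-2\log_3 m)-V_0$.
\item Show that $\M(\varphi_a)>V+V^{-4/5}$ has relative probability $\exp(-ce^{\pi V/\Mp}V^{-4/5})$. This follows from Theorem \ref{THM Ncal_m(V)} applied at $V$ and at $V+V^{-4/5}$: the ratio is $\exp(-A e^{\pi V/\Mp}(e^{\pi V^{-4/5}/\Mp}-1)(1+O(1/V)))$. The $O(1/V)$ error is smaller than $V^{-4/5}$, so the ratio is indeed tiny.
\end{enumerate}
Defining $\A_m(V)$ as the set of $a$ with $\M(\varphi_a)>V$ that avoid the three exceptional events, the statements (1)–(3) and the cardinality estimate then follow.
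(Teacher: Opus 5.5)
\textbf{The gap is in the key deficiency estimate.} Your overall architecture matches the paper's. You use a uniform upper bound for the log-Laplace transform of $\Re(e^{\pi i\theta}\S_\X(t))$ with a loss away from $(1/2,1/2)$. You transfer it to $\Omega_m$ (Proposition \ref{Pro final canonic unif T,Theta}) and compare with the lower bound in Theorem \ref{THM Ncal_m(V)}. Your Step 4 is exactly the paper's argument for item (1). However, the claim $L(1/2,1/2)-L(t,\theta)\gg|t-1/2|^2+|\theta-1/2|^2$ is false in the $t$-direction. At $\theta=1/2$ one has $|c_h(t,1/2)|=\sin^2(\pi ht)/(\pi|h|)$, hence for $t\in(0,1)$
\[
\sum_{1\le|h|\le y}|c_h(t,1/2)|=\frac{1}{\pi}\Big(\log y+\gamma+\log\big(2\sin(\pi t)\big)\Big)+o(1).
\]
So $L(t,1/2)=1/\pi=L(1/2,1/2)$ for every $t\in(0,1)$, and the coefficient $\Mp/\pi$ of $s\log s$ is not maximised only at $(1/2,1/2)$. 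Your $\theta$-half is essentially right: it is Proposition \ref{pro Delta(ht)> C}, a loss $\asymp\cos^2(\pi\theta)$ in the $\log$-coefficient. But any localisation of $t_a$ must come from a deficiency in the coefficient of $s$, not of $s\log s$.

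\textbf{Why your heuristic cannot supply it.} An $O(s)$ deficiency, uniform in $(t,\theta)$ with an $o(s)$ error, cannot be read off from $\Mp s\sum_{|c_h|\ge 1/s}|c_h|$. The cutoff $\pi|h|\le s\,a_h$ depends on $h$. Also, the $f_\X$-terms (which produce $I_\X$ in $\kappa$) contribute at the same order $s$ and depend on the whole profile $(a_h)$.

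\textbf{What the paper does instead.} It uses the convexity comparison of Proposition \ref{pro conv methode}. With $g(x)=\log\E[e^{sx\X/\pi}]$ convex, even and $g(0)=0$, it uses $g(a_h x)\le a_h g(x)$ and interpolates each even index between the neighbouring odd ones. It then Abel-sums against the nonnegative weights $(2N-1)g(\frac{1}{2N-1})-(2N+1)g(\frac{1}{2N+1})$. This reduces everything to the uniform-in-$N$ bound
\[
S_{2N}(t,\theta)\ge\frac{\cos^2(\pi\theta)}{128}\log(3\pi(2N-1))+\sin^4\Big(\frac{\pi}{3}\Big(t-\frac12\Big)\Big).
\]
That gives Theorem \ref{THM asymp t theta}, whose $t$-deficiency is only $C_3|t-1/2|^4s$.

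\textbf{Where the exponents come from.} They are dictated by this quartic loss, not by net entropy. You need $C_3\Delta^4\gg 1/V$ to beat the $O(1/V)$ relative error in Theorem \ref{THM Ncal_m(V)}. Choosing $\Delta=V^{-1/5}$ gives $B_{T,\Theta}=\kappa-C_3V^{-4/5}$ on the four rectangles covering $[0,1]^2\setminus[\frac12-\Delta,\frac12+\Delta]\times[\frac12-\Delta^4,\frac12+\Delta^4]$. Your claimed deficiency $|t-1/2|^2 s\log s$ would instead give $t_a=1/2+O(1/V)$, which shows the exponents in your plan are not derived from it.

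\textbf{Grid size in Step 2.} A union bound over a grid of mesh $m^{-1/2-\delta}$ cannot be absorbed. In the allowed range $e^{\pi V/\Mp}\le\log m/(\log\log m)^2$, so the saving $\exp(-Ce^{\pi V/\Mp}V^{-4/5})$ is only $m^{-o(1)}$. You need the coarse grid with $H_0^{2}$ points, $H_0$ polynomial in $s$, together with the oscillation bounds of Proposition \ref{Pro a not in Ep} outside $\Ecal_m(H_0)$.
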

 
This structure theorem is closely related to \cite[Theorem 1.2]{BGGK18} and \cite[Theorem 1.3]{LN24}, which study the location of the point $t_\chi$ at which the maximum of $\left| \sum_{1\leq n\leq q t } \chi(n) \right|$ is attained for a given non-principal
Dirichlet character $\chi (\mod \,  q)$. In \cite[Theorem 1.2]{BGGK18}, the authors show that, \(t_\chi\) is close to $1/2$ for the vast majority of non-principal Dirichlet characters $\chi (\mod \,q)$, where $q$ is an odd prime. Moreover, \cite[Theorem 2.3]{BGGK18} shows that for the vast majority of non-principal Dirichlet characters $\chi (\mod \,q)$ satisfying $\chi(-1) = 1$, $t_\chi $ is close to $ 1/3$ or $2/3$.
In a different direction, \cite[Theorem 1.3]{LN24} shows that, for most primitive cubic characters \(\chi \,(\mathrm{mod}\, q)\) with \((q,3)=1\), the point \(t_\chi\) is close to a rational number with a large denominator.

\section{Estimate of the Laplace transform for the probabilistic model}
\subsection{Probabilistic model}
Let \(t\in[0,1]\). Endowing \(\Omega_m\) with the uniform probability measure, Assumption~(\hyperlink{Ass Law}{Law}) implies that the random variable \(\B_m(a,t)\) converges in law to
\begin{equation}\label{eq def gamma(h,t)}
\S_\X(t) 
:= \sum_{h\in\Z} \gamma(h,t)\X(h),
\qquad\text{where}\qquad
\gamma(h,t):=
\begin{cases}
t & \text{if } h=0,\\[0.2cm]
\dfrac{e^{2\pi i ht}-1}{2\pi i h} & \text{if } h\neq 0,
\end{cases}
\end{equation}
and where \(\{\X(h)\}_{h\in\Z}\) is a sequence of independent and identically distributed random variables with the same distribution as \(\X\) in Assumption~(\hyperlink{Ass Law}{Law}).

We study the Laplace transform of the probabilistic model \(\S_\X(t)\). Since \(\S_\X(t)\) is complex-valued, we study instead the real-valued random variables
$
\Re\!\left(e^{\pi i\theta}\S_\X(t)\right)\!,
$  for $(t,\theta)\in[0,1]^2.
$
Since \(\X\) is symmetric, we define
\[
\gamma(h,t,\theta)
:= \left|\Re\!\left(e^{\pi i\theta}\gamma(h,t)\right)\right|,
\qquad (t,\theta)\in[0,1]^2,\ h\in\Z.
\]
Then
\begin{equation*}
\Re\!\left(e^{\pi i\theta}\S_\X(t)\right)
\sim
\sum_{h\in\Z}\gamma(h,t,\theta)\X(h)
=
t|\cos(\pi\theta)|\,\X(0)
+
\sum_{h\in\Z^*}
\frac{|\cos(\pi\theta+\pi ht)\sin(\pi ht)|}{\pi|h|}
\,\X(h).
\end{equation*}
Indeed, for \((t,\theta)\in[0,1]^2\), $
\gamma(0,t,\theta)
=
t|\cos(\pi\theta)|,
$
while for \(h\in\Z^*\),
\begin{align*}
\gamma(h,t,\theta) =
\left|
\Re\!\left(
\frac{
e^{\pi i\theta+\pi i ht}
\left(e^{\pi i ht}-e^{-\pi i ht}\right)
}{2\pi i h}
\right)
\right|
&=
\frac{|\cos(\pi\theta+\pi ht)\sin(\pi ht)|}{\pi|h|}.
\end{align*}

\subsection{Optimal case $t=1/2$ and $\theta = 1/2$}
In this section, we find a uniform upper bound for the Laplace transform of the probabilistic model. Indeed, we will show that for all $t,\theta\in [0,1]$,
\[
\log \E \left[\exp \left(s \cdot \sum_{h\in \Z} \gamma(h,t,\theta)\X(h)\right) \right] 
\leq 
\frac{\Mp}{\pi}s\log s +B_{t,\theta}s +O\left(\frac{s}{\log s}\right),
\]
where $B_{t,\theta}$ is a constant depending on $t$ and $\theta$.
Here, the uniformity of the error term is crucial. However, we begin with the imaginary part of the half sums, corresponding to \(t=1/2\) and \(\theta=1/2\). In this case, we have
\begin{equation}\label{eq gamma(h,1/2,1/2)}
    \forall k \in \Z, \qquad
\gamma\left(2k,\frac{1}{2},\frac{1}{2}\right)=0,
\quad
\gamma\left(2k+1,\frac{1}{2},\frac{1}{2}\right)
=
\frac{1}{|2k+1|\pi}.
\end{equation}
Recently, Autissier, Bonolis, and Lamzouri proved in \cite{ABL21} the following asymptotic formula for the Laplace transform of $
\Im\!\left(\S_\X(1/2)\right),$ which in particular yields \eqref{eq ABL Nm(1/2,V)}.

\begin{pro}\label{pro kappa ABL21}\cite[Proposition 5.2]{ABL21} For all $s\geq 2$, we have
\begin{equation}\label{eq asymp 1/2 1/2}
\log \E \left[\exp \left(s \cdot \sum_{h\in \Z} \gamma\left(h,\frac{1}{2},\frac{1}{2}\right)\X(h)\right)\right] 
= \frac{\Mp}{\pi}s\log s+
\kappa s +O\left(\log^2 s \right),
\end{equation}
where
\begin{equation}\label{def kappa}
\kappa :=\frac{\Mp}{\pi}
\left(\log\left(\frac{2}{\pi}\right)+\gamma+ I_\X \right).
\end{equation}
where $\gamma$ is the Euler-Mascheroni constant and  $
I_\X $ is defined in \eqref{def f_X}.

\end{pro}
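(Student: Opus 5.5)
The plan is to write the random sum as $\sum_{k\in\Z}\frac{\X(2k+1)}{\pi|2k+1|}$, using \eqref{eq gamma(h,1/2,1/2)}. By independence, its log-Laplace transform is
\[
\sum_{k\in\Z}\log\E\!\left[e^{s\X/(\pi|2k+1|)}\right]=2\sum_{n\ge1,\ n \text{ odd}} L\!\left(\frac{s}{\pi n}\right),\qquad L(u):=\log\E[e^{u\X}].
\]
Here we use the symmetry of $\X$ to pair $h$ with $-h$; note $L(u)=L(-u)$. I then split according to the definition of $f_\X$ in \eqref{def f_X}. For $n\le s/\pi$ we write $L(u)=\Mp u+f_\X(u)$, and for $n> s/\pi$ we write $L(u)=f_\X(u)$. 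This gives
\[
\frac{2\Mp s}{\pi}\sum_{\substack{n\le s/\pi\\ n\ \mathrm{odd}}}\frac1n \;+\; 2\sum_{n\ \mathrm{odd}} f_\X\!\left(\frac{s}{\pi n}\right).
\]

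The first term is elementary. Since $\sum_{n\le x,\ n \text{ odd}}1/n=\frac12\log x+\frac12\gamma+\log 2+O(1/x)$, it equals
\[
\frac{\Mp}{\pi}s\Big(\log s-\log\pi+\gamma+2\log2\Big)+O(1).
\]
The second term is a Riemann sum for an integral. Setting $g(n)=f_\X(s/(\pi n))$ and summing over odd $n$ with spacing $2$, we get $2\sum_{n\text{ odd}}g(n)\approx\int_0^\infty g(x)\,dx$. The change of variables $x=s/(\pi u)$ turns this into $\frac{s}{\pi}\int_0^\infty f_\X(u)u^{-2}\,du=\frac{\Mp s}{\pi}I_\X$.

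Combining the two pieces gives a total linear coefficient of
\[
\frac{\Mp}{\pi}\Big(-\log\pi+\gamma+2\log 2+I_\X\Big),
\]
which does not match $\kappa$: it has $\log(4/\pi)$ where $\kappa$ has $\log(2/\pi)$. The discrepancy of $\log 2$ should come from the Riemann-sum step over odd integers, so I will redo that comparison carefully. The correct tool is the Euler–Maclaurin formula, or a comparison $\sum_{n\text{ odd}}g(n)=\frac12\int g+\text{error}$ with explicit boundary corrections. The delicate point is the discontinuity of $f_\X$ at $u=1$, that is at $x=s/\pi$. There $f_\X$ jumps by $\Mp$, and this jump interacts with the truncation at $n\le s/\pi$ in the first sum. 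I expect that keeping the boundary terms consistent between the two sums accounts for the $\log 2$ correction and only $O(1)$ or $O(\log s)$ further error. I will recheck this constant against the normalisation in \eqref{def kappa}, since it is taken from \cite{ABL21}.

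The main obstacle is bounding the Riemann-sum error by $O(\log^2 s)$. For this I need regularity of $f_\X$:
\begin{itemize}
\item $f_\X(u)=O(u^2)$ near $0$, since $\X$ is symmetric and bounded;
\item $f_\X$ is smooth on $(0,1)$ and on $(1,\infty)$, with $f_\X'(u)=\E[\X e^{u\X}]/\E[e^{u\X}]-\Mp=O(1)$ for $u>1$;
\item for large $u$, the assumption $\P(\X>\Mp-\varepsilon)\gg\varepsilon^{C_2}$ gives $f_\X(u)\ge -C_2\log u-O(1)$, so $|f_\X(u)|\ll\log u$.
\end{itemize}
I split the range of $n$ into $n\le s/\pi$ and $n>s/\pi$, and on each piece compare the sum with the integral using the bounded-variation estimate $\sum g-\int g\ll \text{(total variation)}$.
\begin{itemize}
\item For $n$ small, $u=s/(\pi n)$ is large, and $|f_\X|\ll\log s$ together with derivative bounds of size $O(u/n)$ gives a total variation of $O(\log s)$ per dyadic block. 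Summed over the $O(\log s)$ dyadic blocks, this yields $O(\log^2 s)$.
\item For $n>s/\pi$, we have $f_\X(u)\ll u^2$, so both the tail and the variation are $O(1)$.
\end{itemize}
Adding the elementary harmonic-sum asymptotics then gives \eqref{eq asymp 1/2 1/2}.
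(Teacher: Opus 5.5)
The paper does not prove this statement; it quotes it from \cite[Proposition 5.2]{ABL21}. Your route is the natural one and does yield $\kappa$: pair $h$ with $-h$, split $L(u)=\Mp u+f_\X(u)$ at $u=1$, evaluate the linear part by the odd harmonic sum, and treat the $f_\X$ part as a Riemann sum. But as written there is a genuine error. Your final linear coefficient contradicts the statement, and your plan for repairing it would not work.

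\textbf{The constant.} The mistake is in the elementary step. For a cutoff $x\ge 1$,
\[
\sum_{\substack{n\le x\\ n\ \mathrm{odd}}}\frac1n=\sum_{n\le x}\frac1n-\frac12\sum_{m\le x/2}\frac1m=\frac12\log x+\frac{\gamma}{2}+\frac{\log 2}{2}+O\Big(\frac1x\Big),
\]
not $\frac12\log x+\frac{\gamma}{2}+\log 2$. The constant $\log 2$ belongs to the expansion in the number of terms $N\approx x/2$, namely $\sum_{k<N}\frac{1}{2k+1}=\frac12\log N+\frac{\gamma}{2}+\log 2+o(1)$. With the correct constant, the linear part is $\frac{\Mp}{\pi}s\log s+\frac{\Mp}{\pi}s\big(\log\frac{2}{\pi}+\gamma\big)+O(1)$. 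Adding $\frac{\Mp}{\pi}sI_\X$ from the Riemann sum gives exactly $\kappa$, so there is no $\log 2$ discrepancy. Your proposed fix could not have worked anyway. The jump of $f_\X$ at $u=1$ has size $\Mp$, so it changes the sum--integral comparison by only $O(1)$. No Euler--Maclaurin boundary term can produce a correction of order $\frac{\Mp\log 2}{\pi}s$.

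\textbf{The error term.} Your estimate for the range $n\le s/\pi$ is not justified as written. From $f_\X'=O(1)$ alone, $g(x)=f_\X(s/(\pi x))$ satisfies only $|g'(x)|\ll s/x^2$. That bounds the variation on $[N,2N]$ by $\ll s/N$, which sums to $O(s)$ over the dyadic blocks, not $O(\log s)$ per block. The missing ingredient is monotonicity:
\begin{itemize}
\item On $[1,\infty)$, $f_\X(u)=\log\E[e^{u(\X-\Mp)}]$ is non-increasing because $\X\le\Mp$, and there $0\ge f_\X(u)\ge -C_2\log u-O(1)$.
\item On $[0,1)$, $f_\X=L$ is non-decreasing with $L(1)=O(1)$.
\end{itemize}
Hence $g$ is monotone on $(0,s/\pi]$ and on $(s/\pi,\infty)$, with $|g(x)|\ll\log(2s/x)$ on the first range. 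Comparing $2\sum_{n\ \mathrm{odd}}g(n)$ with $\int_1^\infty g$ then costs $O(\log s)$, coming from the endpoint values and the single jump. Since also $\int_0^1|g|\ll\log s$, the total error is $O(\log s)$, which is within the stated $O(\log^2 s)$.
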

The constants appearing above are optimal for all $t,\theta\in [0,1]$. Indeed, we obtain the following theorem in Section \ref{section 2.3 preuve thm 2.2}.
\begin{thm}\label{THM asymp t theta} There exists $C_3>0$ such that uniformly for $t, \theta$ and for $s\geq \pi$, we have
\begin{align*}
\log \E \left[\exp \Big(s \cdot \sum_{h\in \Z}
     \gamma(h,t,\theta)\X(h)\Big) \right] 
\leq & \log \E \left[\exp \left(s \cdot \sum_{h\in \Z} \gamma\left(h,\frac{1}{2},\frac{1}{2}\right)\X(h)\right)\right] + O\left(\frac{s}{\log s}\right)\\
    & - C_3 s\left(\left|\theta-\frac{1}{2}\right|^2 \log s +\left|\theta-\frac{1}{2}\right|+ \left|t-\frac{1}{2}\right|^4\right),
\end{align*} 
where $C_3$ and the implicit constant in the term $O\left(\frac{s}{\log s}\right)$ depend only on $\X$ and $C_3=\frac{\log\E\left[e^\X\right]}{64\pi }$ is admissible.
\end{thm}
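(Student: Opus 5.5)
The plan is to work directly with the cumulant function $L(u):=\log\E[e^{u\X}]$, which is even and convex and satisfies $L(u)\le \Mp|u|$. By independence,
\[
\log \E\Big[\exp\Big(s\sum_{h}\gamma(h,t,\theta)\X(h)\Big)\Big]=\sum_{h\in\Z}L\big(s\gamma(h,t,\theta)\big).
\]
I will split the sum at $|h|\le H$ and $|h|>H$ with $H=s\log s$ (to be adjusted).

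\emph{Tail.} For $|h|>H$ the bound $L(u)\ll u^2$ for small $u$, together with $\gamma(h,t,\theta)\le 1/(\pi|h|)$, gives a contribution $\ll s^2/H=O(s/\log s)$.

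\emph{Head: the two regimes of $L$.} For $|h|\le H$ I use $L(u)=\Mp|u|+f_\X(u)$ when $|u|\ge 1$ and $L(u)=f_\X(u)$ when $|u|<1$. The main term is therefore $\Mp s\sum_{|h|\le H}\gamma(h,t,\theta)$, plus a lower-order sum $\sum_{h}f_\X(s\gamma(h,t,\theta))$.

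\emph{Main term.} The key identity is
\[
\big|\cos(\pi\theta+\pi ht)\sin(\pi ht)\big|=\tfrac12\big|\sin(2\pi ht+\pi\theta)-\sin(\pi\theta)\big|.
\]
Write $c=\sin(\pi\theta)$. The function $g(c)=\frac{1}{2\pi}\int_0^{2\pi}|\sin y-c|\,dy$ is maximal at $|c|=1$. Near $c=1$ one has $g'(c)=1-2\P(\sin y>c)\approx 1$. Hence the equidistributed average of $\tfrac12|\sin y-c|$ equals $\tfrac12-\asymp(1-c)$, and $1-c\asymp|\theta-\tfrac12|^2$.

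To turn this heuristic into a bound on $\sum_{|h|\le H}\gamma(h,t,\theta)$, I expand $|\sin y-c|$ in a Fourier series in $y$. The $h$-sums then become sums of the form $\sum_{h\le H}\cos(2\pi kht+\dots)/h$. These are bounded uniformly in $t$ and $H$, since $\sum_{h\le H}\sin(hx)/h$ is uniformly bounded and $\sum_{h\le H}\cos(hx)/h=-\log|2\sin(x/2)|+O(1/(H\|x\|))$. This gives
\[
\Mp s\sum_{|h|\le H}\gamma(h,t,\theta)\le \frac{\Mp}{\pi}s\log s+(\text{const})s-c\,s\,|\theta-\tfrac12|^2\log s+\Mp s\,E(t,\theta),
\]
where $E(t,\theta)$ collects the $t$-dependent constants, for example $\tfrac12\log|2\sin\pi t|$ in the case $\theta=\tfrac12$.

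The next task is to show that $E(t,\theta)$ is maximized at $(t,\theta)=(\tfrac12,\tfrac12)$, with deficit $\gg|\theta-\tfrac12|+|t-\tfrac12|^4$. In the case $\theta=\tfrac12$ this is explicit: $\log|2\sin\pi t|$ has a nondegenerate maximum at $t=\tfrac12$, giving even a quadratic deficit in $t$. The linear term $|\theta-\tfrac12|$ should come from the $k=1$ Fourier coefficient, through the $h=0$ contribution and the odd part in $\theta$. I would use the weaker $|t-\tfrac12|^4$ bound to absorb the interaction between the $t$ and $\theta$ deficits.

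\emph{Lower-order sum.} It remains to compare $\sum_h f_\X(s\gamma(h,t,\theta))$ with the same sum at $(\tfrac12,\tfrac12)$, which is where the $I_\X$ part of $\kappa$ in Proposition~\ref{pro kappa ABL21} comes from. This is the step I expect to be the main obstacle, because the error must be uniform in $t$ and of size $O(s/\log s)$.

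My plan is to exploit $f_\X\le 0$ for $|u|\ge1$, together with the lower bound $f_\X(u)\ge -C_2\log|u|-O(1)$ supplied by the hypothesis $\P(\X>\Mp-\varepsilon)\gg\varepsilon^{C_2}$. I would group the $h$ by the value of $s\gamma$, and use the convexity of $L$ to compare the empirical distribution of $\{s\gamma(h,t,\theta)\}$ with that of $\{s/(\pi|2k+1|)\}$. The intended statement is a majorization-type inequality: the multiset of the $\gamma(h,t,\theta)$ is dominated, in convex order restricted to each dyadic block of $h$, by the $(\tfrac12,\tfrac12)$ configuration.

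Where this comparison fails, namely for small $h$ and $t$ close to rationals with small denominator, I bound the discrepancy crudely. Terms with $|h|\le s/\log s$ are harmless: they contribute $f=O(\log s)$ each, and their net difference is controlled by the Fourier argument above, at a cost of $O(s/\log s)$.

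Finally I collect all constants to see that $C_3=\log\E[e^\X]/(64\pi)$ suffices. This constant arises from the small-$u$ regime through the lower bound $L(u)\ge \log\E[e^{\X}]\,u^2$ for $|u|\le1$, which follows from the convexity of $L$ and $L(0)=0$.
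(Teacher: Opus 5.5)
Your proposal has a genuine gap: the step you flag as the main obstacle carries essentially the whole difficulty, and the decomposition you chose makes it harder. The remainder $\sum_h f_{\X}(s\gamma(h,t,\theta))$ is of exact order $s$. At $(\frac{1}{2},\frac{1}{2})$ it equals $\frac{\Mp}{\pi}I_{\X}s+o(s)$, which is the $I_{\X}$ part of $\kappa$. That is the same order as the deficit $C_3 s|t-\frac{1}{2}|^4$ you must exhibit, so you would have to compare it with its value at $(\frac{1}{2},\frac{1}{2})$ to within the available main-term deficit, uniformly in $(t,\theta)$. Your ``majorization in convex order on dyadic blocks'' is never formulated. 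Convexity cannot be applied to $f_{\X}$ anyway: it is not convex, and it jumps by $-\Mp$ at $|u|=1$.

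The split is also inconsistent as written. Since $L(u)=\Mp|u|+f_{\X}(u)$ only for $|u|\ge 1$, the correct main term is $\Mp s\sum\gamma(h,t,\theta)$ over the irregular set $\{h:\ s\gamma(h,t,\theta)\ge 1\}$, and your Fourier expansion does not apply to that set. Summing over $|h|\le s\log s$ instead exceeds $\frac{\Mp}{\pi}s\log s+O(s)$ by $\asymp s\log\log s$, while a cutoff $H\asymp s$ leaves a tail of order $s$.

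The missing idea is not to split $L$ at all. The paper applies convexity to the whole function $g(x)=\log\E[e^{sx\X/\pi}]$. With $a_h=\pi|h|\gamma(h,t,\theta)\in[0,1]$, it uses $g(a_h/|h|)\le a_h\, g(1/|h|)$ and bounds $g(\frac{1}{2n})$ by a convex combination of $g(\frac{1}{2n-1})$ and $g(\frac{1}{2n+1})$. Summation by parts then gives $\sum_k g(\frac{1}{|2k+1|})-\sum_{h\neq 0}g(\frac{a_h}{|h|})\ge\sum_{N\ge1}S_{2N}(t,\theta)\,w_N$, with $w_N=(2N-1)g(\frac{1}{2N-1})-(2N+1)g(\frac{1}{2N+1})\ge 0$. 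This absorbs the $f_{\X}$ part automatically and reduces everything to a lower bound for the purely trigonometric quantity $S_{2N}(t,\theta)$.

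Your main-term step also rests on a false claim. The sums $\sum_{h\le H}\cos(2\pi kht)/h$ are bounded below but not above uniformly in $t$; by your own formula they equal $\log\min(H,1/\|kt\|)+O(1)$. For $\theta\neq\frac{1}{2}$, $y\mapsto|\sin y-\sin\pi\theta|$ has infinitely many Fourier modes. So for $t$ near $a/q$, every mode $k\equiv 0\pmod q$ feeds the coefficient of $\log H$. That coefficient becomes the Riemann sum $\frac{1}{q}\sum_{j=0}^{q-1}\frac{1}{2}|\sin(2\pi j/q+\pi\theta)-\sin\pi\theta|$ rather than the equidistributed average. The deficit $\gg|\theta-\frac{1}{2}|^2\log s$ must hold for all such $t$, and those in between, simultaneously.

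The paper avoids equidistribution entirely. It writes $a_n+a_{-n}$ as explicit trigonometric terms minus $\frac{1}{2}(\Delta(nt,\theta)+\Delta(-nt,\theta))$, where $\Delta(x,\theta)=(|\sin\pi x|-|\cos\pi(\theta+x)|)^2$. It proves that for every $x$ one of $\Delta(\pm x,\theta),\Delta(\pm 2x,\theta)$ is at least $\frac{2-\sqrt{3}}{4}\cos^2\pi\theta$. From this it gets $\sum_{n\le N}\frac{\Delta(nt,\theta)+\Delta(-nt,\theta)}{n}\gg\cos^2(\pi\theta)\log N$ for every $t$. The $t$-deficit $\sin^4(\frac{\pi}{3}(t-\frac{1}{2}))$ comes from an exact integral formula.

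Three smaller points:
\begin{itemize}
\item The $h=0$ term works against you, contributing up to $+\Mp s|\cos\pi\theta|$. The linear deficit in $|\theta-\frac{1}{2}|$ is obtained by absorbing this into the $\cos^2(\pi\theta)s\log s$ gain at cost $O(s/\log s)$, not from a Fourier coefficient.
\item Convexity and $L(0)=0$ give $L(u)\le|u|L(1)$ on $[-1,1]$, not $L(u)\ge L(1)u^2$. The latter fails, for example, for $\X\in\{0,\pm\Mp\}$ with small mass at $\pm\Mp$.
\item In the paper, $\log\E[e^{\X}]=g(\frac{\pi}{s})$ enters through the monotonicity of $g(x)/x$.
\end{itemize}
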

We now deduce the following corollaries, which will be key ingredients in the proofs of Theorems~\ref{THM Ncal_m(V)} and \ref{thm structure simp} concerning the distribution of the maximum of partial sums.

\begin{cor}\label{THM asymp  uniform (0,1)^2} For all $s\geq \pi $,
\begin{equation}
    \sup_{\theta \in[0,1], t\in [0,1]}\left(
\log \E \left[\exp \Big(s \cdot \sum_{h\in \Z}
     \gamma(h,t,\theta)\X(h)\Big) \right] \right) = 
     \frac{\Mp}{\pi} s \log s+\kappa s+ O\left(\frac{s}{\log s}\right).
\end{equation}

\end{cor}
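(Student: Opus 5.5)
The corollary follows by squeezing the supremum between the point $(t,\theta)=(1/2,1/2)$ and the uniform bound of Theorem~\ref{THM asymp t theta}, then using Proposition~\ref{pro kappa ABL21} to evaluate the common quantity.

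\emph{Lower bound.} The pair $(t,\theta)=(1/2,1/2)$ lies in $[0,1]^2$, so the supremum is at least the value of the log-Laplace transform at this point. By Proposition~\ref{pro kappa ABL21} (valid for $s\geq 2$, hence for $s\geq \pi$), that value equals
\[
\frac{\Mp}{\pi}s\log s+\kappa s+O(\log^2 s).
\]
Since $\log^2 s = O\left(s/\log s\right)$ for $s\geq \pi$, this gives the lower bound.

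\emph{Upper bound.} Fix any $(t,\theta)\in[0,1]^2$ and apply Theorem~\ref{THM asymp t theta}. The penalty term
\[
-C_3 s\left(\left|\theta-\tfrac12\right|^2\log s+\left|\theta-\tfrac12\right|+\left|t-\tfrac12\right|^4\right)
\]
is nonpositive, because $C_3>0$ and $\log s>0$ for $s\geq\pi$, so it can be dropped. The remaining error $O(s/\log s)$ is uniform in $(t,\theta)$, since the implied constant depends only on $\X$. Hence
\[
\log \E\left[\exp\Big(s\sum_{h}\gamma(h,t,\theta)\X(h)\Big)\right]\leq \log \E\left[\exp\Big(s\sum_h \gamma\left(h,\tfrac12,\tfrac12\right)\X(h)\Big)\right]+O\left(\frac{s}{\log s}\right)
\]
for every $(t,\theta)$. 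Taking the supremum over $(t,\theta)$ and inserting Proposition~\ref{pro kappa ABL21} gives the upper bound
\[
\frac{\Mp}{\pi}s\log s+\kappa s+O\left(\frac{s}{\log s}\right).
\]

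Combining the two bounds proves the corollary. The only point needing care is the uniformity of the $O(s/\log s)$ error in $(t,\theta)$, which Theorem~\ref{THM asymp t theta} supplies, so there is no real obstacle here.
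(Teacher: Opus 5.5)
Your proposal is correct and follows the paper's proof exactly: the lower bound comes from evaluating at $(t,\theta)=(1/2,1/2)$ via Proposition~\ref{pro kappa ABL21} (absorbing $O(\log^2 s)$ into $O(s/\log s)$), and the upper bound comes from the uniform estimate of Theorem~\ref{THM asymp t theta} after dropping the nonpositive penalty term.
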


\begin{proof} The lower bound follows from \eqref{eq asymp 1/2 1/2} and the uniform upper bound by Theorem \ref{THM asymp t theta}.
    
\end{proof}

\begin{cor}\label{Cor asymp T Theta} There exists $C_3>0$ such that uniformly for $\Delta \in [0,1/2]$ and for $s\geq \pi$, we have
\begin{equation}\label{eq unif asymp T Theta}
\sup_{(t,\theta) \in E}
\left(
\log 
\E \left[\exp \Big(s \cdot \sum_{h\in \Z}
     \gamma(h,t,\theta)\X(h)\Big) \right]
\right)
     \leq
     \frac{\Mp}{\pi} s \log s+(\kappa-C_3 \Delta^4) s+ O\left(\frac{s}{\log s }\right),
\end{equation}
where $E = [0,1]^2\setminus[1/2-\Delta,1/2+\Delta]\times[1/2-\Delta^4,1/2+\Delta^4]$.

\end{cor}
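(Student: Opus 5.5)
Corollary \ref{Cor asymp T Theta} follows directly from Theorem \ref{THM asymp t theta} combined with Proposition \ref{pro kappa ABL21}. Fix $\Delta\in[0,1/2]$, $s\geq\pi$ and $(t,\theta)\in E$. By Theorem \ref{THM asymp t theta} followed by \eqref{eq asymp 1/2 1/2}, whose error $O(\log^2 s)$ is absorbed into $O(s/\log s)$ for $s\geq \pi$, the quantity $\log\E[\exp(s\sum_h\gamma(h,t,\theta)\X(h))]$ is at most
\[
\frac{\Mp}{\pi}s\log s+\kappa s+O\left(\frac{s}{\log s}\right)-C_3 s\left(\left|\theta-\tfrac12\right|^2\log s+\left|\theta-\tfrac12\right|+\left|t-\tfrac12\right|^4\right).
\]
The implied constant and $C_3$ depend only on $\X$, so this bound is uniform in $(t,\theta)$ and $\Delta$. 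It therefore suffices to show that the penalty term is at least $C_3 s\Delta^4$ on $E$, possibly after shrinking $C_3$ by an absolute factor.

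Membership in $E$ means $|t-1/2|>\Delta$ or $|\theta-1/2|>\Delta^4$. I split into these two cases and drop the other, nonnegative, terms of the penalty. If $|t-1/2|>\Delta$, then $|t-1/2|^4>\Delta^4$. If $|\theta-1/2|>\Delta^4$, the linear term gives $|\theta-1/2|>\Delta^4$. In either case
\[
\left|\theta-\tfrac12\right|^2\log s+\left|\theta-\tfrac12\right|+\left|t-\tfrac12\right|^4\geq \Delta^4 .
\]
Taking the supremum over $(t,\theta)\in E$ then gives \eqref{eq unif asymp T Theta}, with the same $C_3$ as in Theorem \ref{THM asymp t theta} (for instance $C_3=\log\E[e^\X]/(64\pi)$). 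The $\log s$ term is not needed here; it matters only for finer structure results.

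There is no real obstacle, since all the analytic work is in Theorem \ref{THM asymp t theta}. The only points to check are that the error term there is uniform in $(t,\theta)$, which holds because it depends only on $\X$, and that the $O(\log^2 s)$ error from Proposition \ref{pro kappa ABL21} is $O(s/\log s)$ on $s\geq\pi$, which is immediate. The exponent $4$ on $|\theta-1/2|$ in the definition of $E$ is chosen precisely so that both cases yield the common lower bound $\Delta^4$ through the linear $\theta$-term.
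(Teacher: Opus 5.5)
Your proposal is correct and follows the paper's route. The paper only says the corollary is an immediate consequence of Theorem \ref{THM asymp t theta}, and you have written out the two implicit steps. The first is replacing the $(t,\theta)=(1/2,1/2)$ term by the asymptotic of Proposition \ref{pro kappa ABL21}, whose error satisfies $O(\log^2 s)=O(s/\log s)$ for $s\geq\pi$. The second is noting that on $E$ either $|t-1/2|^4>\Delta^4$ or $|\theta-1/2|>\Delta^4$, so the penalty term is at least $C_3 s\Delta^4$.
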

\begin{proof} This is an immediate consequence of Theorem \ref{THM asymp t theta}.
\end{proof}

\subsection{Proof of Theorem \ref{THM asymp t theta}}\label{section 2.3 preuve thm 2.2}
To prove Theorem \ref{THM asymp t theta}, we need to establish the following propositions.

\begin{pro}\label{pro conv methode} 
Let $g:\R\to \R_+$ be a positive, convex, even function of class $\Ccal^3 $ such that $g(0)=0.$
Let $(a_n)_{n\in\Z^*}\in [0,1]^{\Z^*}$ be a real sequence, then we have
\[
\sum_{k\in\Z} g\left(\frac{1}{2k+1}\right)-
\sum_{h\in\Z^*} g\left(\frac{a_h}{|h|}\right)
\geq 
\sum_{N = 1}^{\infty} S_{2N} \left[
(2N-1)g\left(\frac{1}{2N-1}\right)
-(2N+1)g\left(\frac{1}{2N+1}\right)\right].
\]
where
\[
S_{2N} = \sum_{n=1}^N 
\frac{2}{2n-1} +\frac{a_{2N}+a_{-2N}}{4N} -
\sum_{n=1}^{2N} 
\frac{a_{n}+a_{-n}}{n}.
\]
Moreover, all the sums converge absolutely.
\end{pro}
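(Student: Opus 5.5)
The plan is to use convexity of $g$ twice and then apply Abel summation in blocks of two consecutive integers. Put $u(x):=x\,g(1/x)$ for $x>0$. The two elementary facts needed are:
\begin{enumerate}
\item Since $g$ is convex with $g(0)=0$, we have $g(a y)\le a\,g(y)$ for $a\in[0,1]$ and $y\ge 0$; equivalently $y\mapsto g(y)/y$ is nondecreasing on $(0,\infty)$, so $u$ is nonincreasing.
\item $u$ is the perspective function of $g$, hence convex on $(0,\infty)$. In particular $u(2N)\le \tfrac12\big(u(2N-1)+u(2N+1)\big)$.
\end{enumerate}
Absolute convergence is immediate. Since $g$ is even and of class $\Ccal^3$, we have $g'(0)=0$, so $g(y)=O(y^2)$ near $0$ and all the series are dominated by $\sum 1/h^2$.

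\textbf{Reduction to a weighted sum.} By the first fact, using evenness,
\[
\sum_{h\in\Z^*} g\Big(\frac{a_h}{|h|}\Big)\le \sum_{n\ge1}\frac{a_n+a_{-n}}{n}\,u(n)=:\sum_{n\ge1}c_n u(n).
\]
Also $\sum_{k\in\Z} g(1/(2k+1))=\sum_{N\ge1}\frac{2}{2N-1}\,u(2N-1)$. Set $v_N:=u(2N-1)$. For the even terms we have $c_{2N}\ge 0$, so the convexity inequality gives
\[
-c_{2N}u(2N)\ge -\tfrac12 c_{2N}\,v_N-\tfrac12 c_{2N}\,v_{N+1}.
\]
That is, half of $c_{2N}$ is assigned to block $N$ and half to block $N+1$. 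With $c_0:=0$, this yields
\[
\text{LHS}\ \ge\ \sum_{N\ge1} w_N v_N,\qquad w_N:=\frac{2}{2N-1}-c_{2N-1}-\tfrac12 c_{2N-2}-\tfrac12 c_{2N}.
\]
A telescoping check shows $\sum_{n\le N} w_n = S_{2N}$ exactly, since the last term $-\tfrac12 c_{2N}$ produces the correction $+\frac{a_{2N}+a_{-2N}}{4N}$ in $S_{2N}$.

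\textbf{Abel summation.} We write
\[
\sum_{N\le M} w_N v_N=\sum_{N<M} S_{2N}(v_N-v_{N+1})+S_{2M}v_M
\]
and let $M\to\infty$. We have $|S_{2M}|=O(\log M)$ because $a_n\in[0,1]$. Moreover $v_M=u(2M-1)=O(1/M)$ because $g(y)=O(y^2)$. Hence the boundary term tends to $0$ and the identity passes to the limit. This gives exactly the claimed right-hand side, and the series there converges since $v_N-v_{N+1}\ge0$ is $O(1/N^2)$ by the Taylor expansion of $g$ at $0$.

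\textbf{Main obstacle.} The main delicate point is the splitting of the even terms. One must recognize that the correct device is convexity of the perspective $x\mapsto x g(1/x)$, which is what produces the half-weights $\frac{a_{2N}+a_{-2N}}{4N}$ in $S_{2N}$; monotonicity of $u$ alone would not give them. The rest is bookkeeping plus the decay estimates needed to kill the boundary term.
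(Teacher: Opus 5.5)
Your proof is correct and follows the paper's route: the bound $g(a_h/|h|)\le a_h\,g(1/|h|)$, then splitting each even term via $u(2n)\le\frac12\bigl(u(2n-1)+u(2n+1)\bigr)$, then Abel summation. That splitting inequality is exactly the paper's convexity inequality for $g$ at $\frac{1}{2n}=\frac{2n-1}{4n}\cdot\frac{1}{2n-1}+\frac{2n+1}{4n}\cdot\frac{1}{2n+1}$, and your explicit check that the boundary term $S_{2M}v_M\to 0$ supplies a detail the paper only alludes to.
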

We will use Proposition \ref{pro conv methode} with $g: x\mapsto \log \E[\exp(sx\X/\pi)]$ and
\begin{equation}\label{def a_h}
a_{h}(t, \theta) = \pi|h|\gamma(h,t,\theta) = |\cos(\pi \theta+\pi ht)\sin(\pi h t)|\in[0,1],
\end{equation}
for $t, \theta\in[0,1]$ and $h \in \Z^*$.
In fact, we have good properties for $x\mapsto \log \E[\exp(\lambda x\X)]$ for $\lambda>0 $.
\begin{lem}\label{lem log E exp convexe} For all $\lambda>0,$ the function $g (x):= \log \E[\exp(\lambda x\X)]$ is even, increasing on $(\R^+)^*$ and convex with $g(0) = 0.$
\end{lem}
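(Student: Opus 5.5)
We prove the four properties (value at $0$, evenness, convexity, monotonicity) directly from the definition of $g(x)=\log\E[\exp(\lambda x\X)]$, using the symmetry of $\X$ and Hölder's inequality.

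\textbf{Well-definedness and $g(0)=0$.} Since $\X$ takes values in $[-\Mp,\Mp]$, the quantity $\E[\exp(\lambda x\X)]$ is finite and positive for every real $x$. Hence $g$ is well defined, and $g(0)=\log 1=0$.

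\textbf{Evenness.} Since $\X\sim-\X$ by Assumption (\hyperlink{Ass Law}{Law}), we have $\E[e^{-\lambda x\X}]=\E[e^{\lambda x\X}]$, so $g(-x)=g(x)$.

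\textbf{Convexity.} For $x,y\in\R$ and $u\in[0,1]$, apply Hölder's inequality with exponents $1/u$ and $1/(1-u)$:
\[
\E\big[e^{\lambda(ux+(1-u)y)\X}\big]=\E\big[(e^{\lambda x\X})^u(e^{\lambda y\X})^{1-u}\big]\le \E\big[e^{\lambda x\X}\big]^u\,\E\big[e^{\lambda y\X}\big]^{1-u}.
\]
Taking logarithms gives $g(ux+(1-u)y)\le ug(x)+(1-u)g(y)$. So $g$ is a log-moment generating function and hence convex.

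\textbf{Monotonicity.} A convex even function is nondecreasing on $\R^+$: for $0\le x<y$, write $x=uy+(1-u)(-y)$ with $u=\frac{x+y}{2y}\in[0,1)$. Then
\[
g(x)\le ug(y)+(1-u)g(-y)=g(y).
\]
For strict increase on $(\R^+)^*$, note that $g$ is smooth, since $\X$ is bounded and we may differentiate under the expectation. Its second derivative
\[
g''(x)=\lambda^2\operatorname{Var}_{x}(\X)
\]
is the variance of $\X$ under the tilted law $e^{\lambda x\X}\,d\P/\E[e^{\lambda x\X}]$. This variance is strictly positive because $\X$ is not almost surely constant: the condition $\P(\X>\Mp-\varepsilon)>0$ together with symmetry forces $\X$ to take values near both $\Mp$ and $-\Mp$. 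Thus $g$ is strictly convex. By evenness, $g'(0)=0$, and so $g'(x)>0$ for $x>0$.

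The only point needing any care is this nondegeneracy of $\X$, required for strict monotonicity. It follows immediately from the tail assumption combined with symmetry, so nothing else is delicate.
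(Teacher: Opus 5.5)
Your proof is correct, and it takes a somewhat different route from the paper's. The paper differentiates under the expectation. It writes $g'(x)=\lambda\E[\X e^{\lambda x\X}]/\E[e^{\lambda x\X}]$ and asserts $g'(x)\geq 0$ for $x>0$. It then gets $g''\geq 0$ from the Cauchy--Schwarz inequality $\E[\X e^{\lambda x\X}]^2\leq \E[\X^2e^{\lambda x\X}]\,\E[e^{\lambda x\X}]$.

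You instead obtain convexity without derivatives, from H\"older's inequality, and deduce that $g$ is nondecreasing on $\R^+$ from convexity together with evenness. Your version makes explicit why the sign condition holds. The paper's claim $\E[\X e^{\lambda x\X}]\geq 0$ silently uses the symmetry of $\X$, for instance via $\E[\X e^{\lambda x\X}]=\E[\X\sinh(\lambda x\X)]\geq 0$; in your argument this is exactly the evenness step. The paper's version is shorter and produces the explicit formulas for $g'$ and $g''$. Your monotonicity step also yields $g\geq g(0)=0$, which is the positivity the paper later invokes when applying its convexity proposition.

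Your last paragraph, on strict increase, goes beyond what the paper proves or needs, since the paper only shows $g'\geq 0$. The argument there is sound: you correctly extract non-degeneracy of $\X$ from the tail condition combined with symmetry. The one implicit point is that the tilted law $e^{\lambda x\X}\,d\P/\E[e^{\lambda x\X}]$ is also non-degenerate. This holds because the tilted law has a strictly positive density with respect to the law of $\X$.
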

\begin{proof} Since $\X$ is symmetric, $g$ is even and $g(0) =0.$
For all $x>0,$ we have
\[
g'(x) = \frac{\lambda\E[\X\exp(\lambda x\X)]}{\E[\exp(\lambda x\X)]}\geq 0,
\]
and
\[
g''(x) = \lambda^2\frac{\E[\X^2\exp(\lambda x\X)]\E[\exp(\lambda x\X)]-\E[\X\exp(\lambda x\X)]^2}{\E[\exp(\lambda x\X)]^2}\geq 0,\]
since, by the Cauchy--Schwarz inequality,
$
\E[\X\exp(\lambda x\X)]^2
\leq
\E[\X^2\exp(\lambda x\X)]
\E[\exp(\lambda x\X)].$
\end{proof}
Now, we have the following proposition that gives a lower bound for the sum $S_{2N}$ which is sufficient to prove Theorem \ref{THM asymp t theta}. We define
\begin{equation}\label{def S 2N}
    S_{2N}(t,\theta) := \sum_{n=1}^N 
\frac{2}{2n-1}+\frac{a_{2N}(t,\theta)+a_{-2N}(t,\theta)}{4N} -
\sum_{n=1}^{2N} 
\frac{a_n(t,\theta)+a_{-n}(t,\theta)}{n},
\end{equation}
where $a_n(t,\theta)$ is defined in \eqref{def a_h}.

\begin{pro}\label{thm S_2N > marge (t,theta)}
Let \(N\geq1\) be an integer and let \(t,\theta\in[0,1]\). We have
\[
    S_{2N}(t,\theta)
    \geq  
    \frac{\cos(\pi\theta)^2}{128}\log(3\pi(2N-1))
    +\sin^4\left(\frac{\pi}{3}(t-1/2)\right).
    \]
\end{pro}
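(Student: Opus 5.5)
The plan is to rewrite $S_{2N}(t,\theta)$ as a sum of nonnegative ``deficits'' relative to the extremal configuration $t=\theta=1/2$. Set $b_n:=a_n(t,\theta)+a_{-n}(t,\theta)$ and $c_n:=2$ for $n$ odd, $c_n:=0$ for $n$ even. Since $\sum_{n=1}^N \frac{2}{2n-1}=\sum_{n\le 2N} c_n/n$, we have
\[
S_{2N}(t,\theta)=\sum_{n=1}^{2N}\frac{c_n-b_n}{n}+\frac{b_{2N}}{4N}.
\]
With $x=\pi n t$ and $u=\pi\theta$, the identity $|\cos(u+x)|+|\cos(u-x)|=2\max(|\cos u\cos x|,|\sin u\sin x|)$ gives
\[
b_n=2|\sin x|\max(|\cos u|\,|\cos x|,\ |\sin u|\,|\sin x|)\le 2|\sin x|\sqrt{\cos^2u\cos^2x+\sin^2u\sin^2x}.
\]
I split $b_n\le 2|\sin(\pi nt)|-D_n$, where $D_n\ge0$ carries the $\theta$-dependence. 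From $\sqrt{1-v}\le 1-v/2$ applied with $v=\cos^2u\sin^2x+\sin^2u\cos^2x$ I expect roughly $D_n\ge \cos^2(\pi\theta)\,|\sin x|^3$ (or a similar term vanishing only when $\sin x=0$). This reduces the claim to two separate estimates: a $t$-part
\[
T_{2N}(t):=\sum_{n\le 2N}\frac{c_n-2|\sin \pi n t|}{n}+\frac{b_{2N}}{4N}\ \ge\ \sin^4\!\Big(\frac{\pi}{3}(t-\tfrac12)\Big),
\]
and a $\theta$-part $\sum_{n\le 2N} D_n/n\ge \frac{\cos^2(\pi\theta)}{128}\log(3\pi(2N-1))$.

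For the $\theta$-part, I will use that $|\sin(\pi n t)|^3$ has mean $4/(3\pi)$ over the circle. If $t$ is not too close to a rational with small denominator, a positive proportion of $n$ in each dyadic block have $|\sin \pi n t|\ge 1/2$. When $t$ is close to $0$ or $1$, the terms with $n\asymp 1/t$ already contribute. More robustly, among any two consecutive $n$, at least one satisfies $|\sin\pi n t|\ge \sin(\pi/3)$, unless $t$ is close to $0$ or $1$; since $|\sin|$ can be small at both $n$ and $n+1$ only if $t$ is near an integer, this alternative needs no Diophantine input. Summing over pairs gives $\gg\log N$ with an explicit constant. The constant $1/128$ and the $\log(3\pi(2N-1))$ normalisation (which also covers $N=1$) leave room, and the small-$N$ and near-integer cases are absorbed by the slack in $T_{2N}$.

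For the $t$-part, I will insert the Fourier expansion $|\sin y|=\frac{2}{\pi}-\frac{4}{\pi}\sum_{j\ge1}\frac{\cos 2jy}{4j^2-1}$. This turns $\sum_{n\le 2N}2|\sin\pi nt|/n$ into $\frac{4}{\pi}H_{2N}-\frac{8}{\pi}\sum_j\frac{1}{4j^2-1}\sum_{n\le 2N}\frac{\cos 2\pi jnt}{n}$. The inner sums equal $-\log|2\sin\pi jt|$ plus an error of size $O(1/(N\|jt\|))$. At $t=1/2$ this reproduces $\sum c_n/n$ exactly up to the boundary term, and the resulting main term is a function $F(t)=\frac{8}{\pi}\sum_j\frac{-\log|2\sin\pi j t|}{4j^2-1}$ compared with its value at $1/2$. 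A Taylor/convexity argument around $t=1/2$ then gives the quartic deficit: the quadratic term vanishes by symmetry $t\mapsto1-t$ combined with the structure of the $c_n$, which explains why the exponent is $4$.

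The main obstacle is making this uniform in $N$, including small $N$ and $t$ near rationals where the truncation errors $1/(N\|jt\|)$ blow up. To handle it, I will instead pair the terms $n=2k-1,2k$ directly and prove an elementary inequality of the form
\[
\frac{2-2|\sin((2k-1)y)|}{2k-1}-\frac{2|\sin 2ky|}{2k}\ \ge\ (\text{telescoping boundary terms}),
\]
using $|\sin 2ky|\le|\sin((2k-1)y)|+|\sin y|$. The boundary term $b_{2N}/(4N)$ is designed precisely to absorb the telescoping remainder. I will verify the final $\sin^4$ constant, with its $\pi/3$ scaling, by checking the worst cases $N=1$ and $t\in\{0,1\}$.
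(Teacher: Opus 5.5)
There is a genuine gap, and it is in your first reduction step: the split $b_n\le 2|\sin(\pi nt)|-D_n$ throws away too much.

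\textbf{The $t$-part claim is false.} Take $N=1$ and $t=\tfrac12+\varepsilon$. The $n=1$ term of $T_2$ is $2-2\cos\pi\varepsilon=O(\varepsilon^2)$. The $n=2$ term is $-|\sin 2\pi\varepsilon|$. The boundary term satisfies $b_2/4\le\tfrac12|\sin2\pi\varepsilon|$. Hence $T_2(t)\le-\pi|\varepsilon|+O(\varepsilon^2)<0$. Replacing $\sin^2(\pi nt)$ by $|\sin(\pi nt)|$ creates a first-order loss at even $n$, so no Taylor or symmetry argument can produce a quartic deficit.

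For irrational $t$ it is worse. By equidistribution, $\sum_{n\le2N}2|\sin\pi nt|/n\sim\frac4\pi\log N$, while $\sum_{n\le2N}c_n/n\sim\log N$, so $T_{2N}(t)\to-\infty$. Your Fourier computation hides this: the coefficient of $\log N$ drops to $1$ only at $t=\tfrac12$, where the even-$j$ inner sums equal $H_{2N}$; for irrational $t$ it is $\frac4\pi$.

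\textbf{The $\theta$-part cannot compensate.} At $\theta=\tfrac12$ one has exactly $b_n=2\sin^2(\pi nt)$, of mean $1$. Even keeping the full $D_n=|\sin x|\,v=|\sin x|\cos^2x$, your majorant is $|\sin x|(1+\sin^2x)$, of mean $\frac2\pi+\frac{4}{3\pi}=\frac{10}{3\pi}>1$. The linearization $\sqrt{1-v}\le1-\frac v2$ loses $|\sin x|(1-|\sin x|)^2$ per term. Any loss of positive mean is fatal against the divergent weights $1/n$.

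So your total lower bound for $S_{2N}(t,\tfrac12)$ behaves like $(1-\frac{10}{3\pi})\log N\to-\infty$ for irrational $t$. For $N=1$, $t=\tfrac12+\varepsilon$, it equals $-\pi|\varepsilon|+O(\varepsilon^2)$. In fact $S_2(t,\tfrac12)=2\sin^4(\pi\varepsilon)$, and the statement requires a positive bound there. Reallocating slack, or applying your pairing inequality to $T_{2N}$, cannot fix a lower bound that is genuinely negative.

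\textbf{What is missing is an exact decomposition.} The identity $|AB|=\frac12\big(A^2+B^2-(|A|-|B|)^2\big)$, with $A=\sin\pi nt$ and $B=\cos\pi(\theta\pm nt)$, gives
\[
b_n=\cos^2(\pi\theta)+2\sin^2(\pi\theta)\sin^2(\pi nt)-\tfrac12\big(\Delta(nt,\theta)+\Delta(-nt,\theta)\big),\qquad \Delta(x,\theta)=\big(|\sin\pi x|-|\cos\pi(\theta+x)|\big)^2 .
\]
The comparison function is now $2\sin^2(\pi nt)=1-\cos2\pi nt$, whose sums against $1/n$ stay bounded. This yields three pieces.
\begin{itemize}
\item The $t$-part, carrying the prefactor $\sin^2(\pi\theta)$, is
\[
\sum_{n\le2N}\frac{\cos2\pi nt-(-1)^n}{n}-\frac{\cos4\pi Nt-1}{4N}=2\pi\int_0^{|t-1/2|}\sin^2(2\pi Nu)\tan(\pi u)\,du,
\]
since its $t$-derivative is $-2\pi\sin^2(2\pi Nt)\cot\pi t$. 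This is at least $\sin^4\big(\frac\pi3(t-\frac12)\big)$.
\item The constant piece leaves $\cos^2(\pi\theta)\log 2$ of slack.
\item The deficit satisfies $\Delta(x,\theta)=\cos^2(\pi\theta)$ for $x\in\Z$, unlike your $|\sin x|^3$, which vanishes there. Consequently one of $\Delta(\pm x,\theta),\Delta(\pm2x,\theta)$ is $\gg\cos^2(\pi\theta)$ for every $x$. This gives the $\frac{\cos^2(\pi\theta)}{128}\log N$ term uniformly in $t$, with no Diophantine case analysis.
\end{itemize}
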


\begin{proof}[Proof of Theorem \ref{THM asymp t theta}]
Let $s>0$ be a real number, put $g(x) = \log\E\left[\exp\left(s x\X/\pi\right)\right]$. Then by Lemma \ref{lem log E exp convexe}, $g$ is a positive convex even  function which is increasing on $\R^+$ and such that $g(0) = 0$. Moreover, $ a_{h} = a_{h}(t, \theta) = \pi|h|\gamma(h,t,\theta)$, thus we can write
\[\sum_{h\in \Z} \log \E \left[\exp \left(s \cdot  \gamma(h,t,\theta)\X(h)\right) \right] 
= \log \E \left[\exp \left(s \cdot  \gamma(0,t,\theta)\X(0)\right) \right]
+\sum_{h \in\Z^*} g \left(\frac{a_h(t,\theta)}{|h|
}\right).
\] 
Now, since $a_h\in[0,1],$ we can use Proposition \ref{pro conv methode} to obtain
\begin{align*}&\sum_{h \in\Z^*} g \left(\frac{a_h}{|h|}\right)-
\sum_{k\in\Z} g\left(\frac{1}{|2k+1|}\right) \\
&\leq 
-\sum_{N = 1}^{\infty} S_{2N}(t,\theta) \left[
(2N-1)g\left(\frac{1}{2N-1}\right)
-(2N+1)g\left(\frac{1}{2N+1}\right)\right].
\end{align*}
By \eqref{eq gamma(h,1/2,1/2)}, we obtain
\[
\log \E \left[\exp \left(s \cdot \sum_{h\in \Z} \gamma\left(h,\frac{1}{2},\frac{1}{2}\right)\X(h)\right)\right]
= \log \E \left[\exp \left(s \cdot \sum_{k\in \Z} \frac{\X(2k+1)}{|2k+1|\pi}\right)\right] = \sum_{k\in\Z} g\left(\frac{1}{|2k+1|}\right).\]
On the other hand, by Proposition \ref{thm S_2N > marge (t,theta)}, we have the lower bound,
\[S_{2N}(t,\theta)
    \geq  
    \frac{\cos(\pi\theta)^2}{128}\log(3\pi(2N-1))
    +\sin^4\left(\frac{\pi}{3}(t-1/2)\right) 
    \geq\lambda_1+\lambda_2 \log(3\pi(2N-1)),
\]
where for all $t,\theta\in[0,1],$
\[
\lambda_1 :=  \frac{1}{16}\left|t-\frac{1}{2}\right|^4
\leq 
    \sin^4\left(\frac{\pi}{3}(t-1/2)\right), \qquad
\lambda_2 :=   
    \frac{\cos(\pi\theta)^2}{128}
    \geq 0. 
\]
Moreover, since $g(0)=0$ and $g$ is convex, $\forall \mu\in[0,1],\, \forall x\in\R , \, \mu g( x)\geq  g(\mu x), $
thus we have
\[(2N-1)g\left(\frac{1}{2N-1}\right)
-(2N+1)g\left(\frac{1}{2N+1}\right)
\geq 0.
\]
Since $s \geq \pi $, let \(N_0 \in \mathbb{N}^*\) be such that
$2N_0-1 \leq s/\pi < 2N_0+1.$
Summing these terms gives
\begin{align*}
    &\sum_{N = 1}^{\infty} S_{2N}(t,\theta) \left[
(2N-1)g\left(\frac{1}{2N-1}\right)
-(2N+1)g\left(\frac{1}{2N+1}\right)\right]\\
\geq& \sum_{N = 1}^{\infty}  \lambda_1\left[
(2N-1)g\left(\frac{1}{2N-1}\right)
-(2N+1)g\left(\frac{1}{2N+1}\right)\right]\\
&+\sum_{N = N_0}^{\infty}  \lambda_2 \log(3\pi(2N-1))\left[
(2N-1)g\left(\frac{1}{2N-1}\right)
-(2N+1)g\left(\frac{1}{2N+1}\right)\right]\\
\geq & \lambda_1 g(1)+\lambda_2 \log(3\pi(2N_0-1))
(2N_0-1)g\left(\frac{1}{2N_0-1}\right).
\end{align*}
Now, since $\log(3\pi(2N_0-1))\geq \log(\pi(2N_0+1)) \geq \log s$ and since $g$ is convex, we have
$(2N_0-1)g\left(\frac{1}{2N_0-1}\right)\geq \frac{s}{\pi} g\left(\frac{\pi }{s}\right) $ and hence
\begin{align*}
&\sum_{N = 1}^{\infty} S_{2N}(t,\theta) \left[
(2N-1)g\left(\frac{1}{2N-1}\right)
-(2N+1)g\left(\frac{1}{2N+1}\right)\right]\\
&\geq  \lambda_1 g(1)+\lambda_2(\log s) \frac{s}{\pi} g\left(\frac{\pi}{s}\right)\\
&\geq \frac{g(1)}{16}\left|t-\frac{1}{2}\right|^4+
\frac{\cos(\pi\theta)^2}{128 \pi }g\left(\frac{\pi}{s}\right) s\log s 
    .
\end{align*}
We have $g(\pi/s)  = \log\E\left[e^\X\right]$ and by \cite[Lemma 5.3]{ABL21}, we obtain $$g(1)=\log\E\left[\exp\left(s \X/\pi\right)\right] = \Mp s/\pi+O(\log s),$$ and
\[
\log \E \left[\exp \left(s \cdot  \gamma(0,t,\theta)\X(0)\right) \right] =\log\E\left[\exp\left(s t|\cos(\pi\theta)|\X\right)\right] \leq \Mp st|\cos(\pi\theta)|.
\]
Thus, combining these results, we have shown
\begin{align*}
&\log \E \left[\exp \left(s \cdot \sum_{h\in \Z} \gamma(h,t,\theta)\X(h)\right) \right] 
    -  \log \E \left[\exp \left(s \cdot \sum_{h\in \Z} \gamma\left(h,\frac{1}{2},\frac{1}{2}\right)\X(h)\right)\right]\\
    &\leq \Mp st|\cos(\pi\theta)| - \frac{\Mp s}{16 \pi }\left|t-\frac{1}{2}\right|^4 
    - \frac{\cos(\pi\theta)^2}{128 \pi }\log\E\left[e^\X\right] s\log s +O(\log s) \\
    &\leq -\Mp |\cos(\pi\theta)|s
     -\frac{\Mp }{16 \pi}\left|t-\frac{1}{2}\right|^4 s -C_3 \frac{\cos(\pi\theta)^2}{4}s\log s
     + A(t,\theta,s)+O\left(\log s\right).
\end{align*} 
where $C_3 := \frac{\log\E\left[e^\X\right]}{64\pi}$ and
\[
A(t,\theta,s) \leq  2\Mp s|\cos(\pi\theta )|
-C_3\frac{\cos(\pi\theta)^2}{4}s\log s
= 
 2\Mp s|\cos(\pi\theta )|
 \left(1-\frac{C_3}{8\Mp}|\cos(\pi\theta )|\log s\right). 
\]
If $1\leq \frac{C_3}{8\Mp}|\cos(\pi\theta)|\log s $ then $A(t,\theta,s)\leq 0,$
otherwise $|\cos(\pi \theta)|\ll \frac{1}{\log s} $ and hence $A(t,\theta,s)\ll s/\log s$ uniformly for $t$ and $\theta$. Hence, by  $|\cos(\pi\theta)| \geq 2|\theta-1/2|$, we obtain
 \begin{align*}
&\log \E \left[\exp \left(s \cdot \sum_{h\in \Z} \gamma(h,t,\theta)\X(h)\right) \right] 
    -  \log \E \left[\exp \left(s \cdot \sum_{h\in \Z} \gamma\left(h,\frac{1}{2},\frac{1}{2}\right)\X(h)\right)\right]\\
    &\leq -2\Mp \left|\theta-\frac{1}{2}\right| s
     -\frac{\Mp }{16 \pi}\left|t-\frac{1}{2}\right|^4 s -C_3 \left|\theta-\frac{1}{2}\right|^2 s\log s
     +O\left(\frac{s}{\log s}\right).
\end{align*} 
Since $\Mp \geq \log\E\left[e^\X\right]$, then $C_3 = \frac{\log\E\left[e^\X\right]}{64\pi}$ is admissible, which completes the proof.
\end{proof}

\subsection{Proof of Proposition \ref{pro conv methode}}

\begin{proof}[Proof of Proposition \ref{pro conv methode}] Let $(a_n)_{n\in\Z^*}\!\in[0,1]^{\Z^*}\!\!,$ $g$ be a positive, convex, even function such that $g(0) = 0$, then, we have the following convex inequalities: 
\begin{equation}\label{ineq conv 1}
    \forall a\in [0,1], \forall t\in\R,\qquad g(a t)\leq a g(t) = (1-a)g(0)+ag(t).
\end{equation}
\begin{equation}\label{ineq conv 2}
    \forall n\in \N^*,\qquad g\left(\frac{1}{2n} \right) 
\leq 
\frac{1}{2n}\cdot \frac{2n-1}{2}g\left(\frac{1}{2n-1}\right)+
\frac{1}{2n}\cdot \frac{2n+1}{2}g\left(\frac{1}{2n+1}\right).
\end{equation}

The sums in the statement of Proposition \ref{pro conv methode} converge absolutely since for all $N\in \N^*$, $S_{2N}\ll \log 2N$ and 
there exists $\alpha>0$ such that for all  $t\in[-1,1],\; g\left(t\right)= \alpha t^2+ o\left(t^3\right)$, so for all $N\in \N^*$,
\begin{equation}\label{ineq telescopic sum}
(2N-1)g\left(\frac{1}{2N-1}\right)
-(2N+1)g\left(\frac{1}{2N+1}\right)
\ll_g \frac{1}{N^2}.
\end{equation}

By \eqref{ineq conv 1} and the parity of $g$, we observe that,
\begin{align*}
\sum_{h\in\Z^*} g\left(\frac{a_h}{|h|}\right)
\leq &
\sum_{h\in\Z^*} a_h g\left(\frac{1}{|h|}\right)=\sum_{n = 0}^\infty \left(\frac{a_{2n+1}+a_{-(2n+1)}}{2n+1}\right)(2n+1)g\left(\frac{1}{2n+1}\right)+
\sum_{n \in \Z^*} a_{2n} \,g\left(\frac{1}{|2n|}\right).
\end{align*}
Now by \eqref{ineq conv 2}, we obtain,
\[
\sum_{n \in \Z^*} a_{2n}\, g\left(\frac{1}{|2n|}\right)
\leq 
\sum_{n=1}^\infty
\frac{a_{2n}+a_{-2n}}{4n} \left((2n-1)g\left(\frac{1}{2n-1}\right)+ (2n+1)g\left(\frac{1}{2n+1}\right)\right).
\]
And
\begin{align*}
    \sum_{h\in\Z^*} g\left(\frac{a_h}{|h|}\right)
\leq &\left[a_{1}+a_{-1}
+\frac{a_{2}+a_{-2}}{4}\right]g\left(1\right)\\
+&
\sum_{n\geq 1} \left[\frac{a_{2n+1}+a_{-(2n+1)}}{2n+1}
+\frac{a_{2n}+a_{-2n}}{4n} +\frac{a_{2n+2}+a_{-(2n+2)}}{4n+4}
\right](2n+1)g\left(\frac{1}{2n+1}\right).
\end{align*}
Hence, we see that
\begin{align*}
     &
\sum_{k\in\Z} g\left(\frac{1}{|2k+1|}\right)-
\sum_{h\in\Z^*} g\left(\frac{a_h}{|h|}\right)\\
\geq &
\left[2-a_{1}-a_{-1}
-\frac{a_{2}+a_{-2}}{4}\right]g\left(1\right)\\
+&
\sum_{n\geq 1} \left[\frac{2-a_{2n+1}-a_{-(2n+1)}}{2n+1}
-\frac{a_{2n}+a_{-2n}}{4n} -\frac{a_{2n+2}+a_{-(2n+2)}}{4n+4}
\right](2n+1)g\left(\frac{1}{2n+1}\right)\\
=& S_2 \cdot g\left(1\right)+
\sum_{n \geq 1} \left[S_{2n+2}-S_{2n}\right](2n+1)g\left(\frac{1}{2n+1}\right)\\
=&\sum_{N \geq 1} S_{2N} \left[
(2N-1)g\left(\frac{1}{2N-1}\right)
-(2N+1)g\left(\frac{1}{2N+1}\right)\right],
\end{align*}
the last equality following from Abel's summation formula, since the series converge.
\end{proof}

\subsection{Proof of Proposition \ref{thm S_2N > marge (t,theta)}}
\subsubsection{Definition and decomposition of $a_h$}
Recall that $a_n(t,\theta)$ is defined in \eqref{def a_h}. Then:
\begin{align*}
   a_{n}(t, \theta) :&= \pi|n|\gamma(n,t,\theta) = |\cos(\pi \theta+\pi nt)\sin(\pi n t)|\\
    &=\frac{1}{2}\left(\cos^2(\pi \theta+\pi n t)+\sin^2(\pi n t)-\left(|\sin(\pi n t)|-|\cos(\pi \theta+\pi n t)|
    \right)^2\right)\\
    &= \frac{1}{4}\left(2+\cos(2\pi (\theta+ n t))-\cos(2\pi n t)\right)-\frac{\Delta( n t,\theta)}{2},
\end{align*}
where
\begin{equation}\label{def Delta (x,theta)}
\Delta(x,\theta):= (|\sin(\pi x)|-|\cos(\pi (\theta+x))|)^2,
\qquad (x,\theta)\in\R^2.    
\end{equation}
Thus, we decompose $S_{2N}$ as follows.
\begin{lem}\label{lem S_2k decomposition}Let \(N\geq1\) be an integer and let \(t,\theta\in[0,1]\). Using the notation introduced in \eqref{def S 2N} and \eqref{def Delta (x,theta)}, we have
    \begin{align*}
    S_{2N}(t,\theta)&\geq  
    \left[\frac{1-\cos(2\pi \theta)}{2}\right]\left(\sum_{n=1}^{2N} \frac{\cos(2\pi nt)-(-1)^n}{n} - \frac{\cos(4\pi N t)-1}{4N}\right)\\
    &+\left[\frac{1+\cos(2\pi \theta)}{2}\right]\log 2+\sum_{n=1}^{2N-1}
    \frac{\Delta(nt,\theta) +\Delta(-nt,\theta)}{2n}.
\end{align*}
\end{lem}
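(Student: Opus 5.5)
The plan is to substitute the identity for $a_n(t,\theta)$ derived just above into the definition \eqref{def S 2N} and sort the result into three groups. The first group multiplies $\frac{1-\cos(2\pi\theta)}{2}$, the second multiplies $\frac{1+\cos(2\pi\theta)}{2}$, and the third collects the $\Delta$-terms. Only two steps are inequalities: discarding one nonnegative $\Delta$-term, and bounding the alternating harmonic sum below by $\log 2$.

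\emph{Step 1: symmetrize.} Write $c:=\cos(2\pi\theta)$. Adding the formulas for $a_n$ and $a_{-n}$ and using $\cos(2\pi(\theta+nt))+\cos(2\pi(\theta-nt))=2c\cos(2\pi nt)$ gives
\[
a_n+a_{-n}=1-\frac{1-c}{2}\cos(2\pi nt)-\frac{\Delta(nt,\theta)+\Delta(-nt,\theta)}{2}.
\]
Next insert this into $S_{2N}$. The constant terms are handled by the identity $\sum_{n=1}^N\frac{2}{2n-1}-\sum_{n=1}^{2N}\frac1n=-\sum_{n=1}^{2N}\frac{(-1)^n}{n}$. I then split the constant part $-\sum_{n\le 2N}\frac{(-1)^n}{n}+\frac{1}{4N}$ as $\frac{1-c}{2}(\cdot)+\frac{1+c}{2}(\cdot)$. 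With this split, the bracket multiplying $\frac{1-c}{2}$ is exactly
\[
\sum_{n=1}^{2N}\frac{\cos(2\pi nt)-(-1)^n}{n}-\frac{\cos(4\pi Nt)-1}{4N},
\]
as in the statement.

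\emph{Step 2: the $\Delta$-terms.} After Step 1 these terms total
\[
\sum_{n=1}^{2N}\frac{\Delta(nt,\theta)+\Delta(-nt,\theta)}{2n}-\frac{\Delta(2Nt,\theta)+\Delta(-2Nt,\theta)}{8N}.
\]
The $n=2N$ summand has coefficient $\frac1{4N}-\frac1{8N}\ge0$, and $\Delta\ge0$. So I drop the $n=2N$ summand, which leaves the sum over $n\le 2N-1$ as a lower bound.

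\emph{Step 3: the $\log 2$ term (main obstacle).} It remains to show that $\frac{1+c}{2}\ge0$ multiplies a quantity at least $\log 2$:
\[
\sum_{n=1}^{2N}\frac{(-1)^{n+1}}{n}+\frac1{4N}\ \ge\ \log 2.
\]
This is equivalent to the tail estimate $T_N:=\int_0^1\frac{x^{2N}}{1+x}\,dx\le \frac1{4N}$. I will integrate by parts twice:
\[
T_N=\frac{1}{2(2N+1)}+\frac{1}{4(2N+1)(2N+2)}+\frac{2}{(2N+1)(2N+2)}\int_0^1\frac{x^{2N+2}}{(1+x)^3}\,dx .
\]
The last integral is at most $\frac{1}{8}\cdot\frac{1}{2N+3}\cdot C$ after one more integration by parts, or crudely at most $\frac1{2N+3}$. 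Comparing with $\frac1{4N}-\frac1{2(2N+1)}=\frac{1}{4N(2N+1)}$, the needed inequality reduces to an elementary check: it holds for $N\ge 2$ from these bounds, and $N=1$ is verified directly, since $1-\tfrac12+\tfrac14=0.75>\log2$. This is the only genuinely quantitative point, and if the crude bound is too weak for small $N$, I will check those finitely many cases numerically.
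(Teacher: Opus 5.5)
Your proof is correct and matches the paper's argument: it uses the same symmetrized formula for $a_n+a_{-n}$, the same split of the constant $\frac{1}{4N}-\sum_{n\le 2N}\frac{(-1)^n}{n}$ between the weights $\frac{1-\cos(2\pi\theta)}{2}$ and $\frac{1+\cos(2\pi\theta)}{2}$, and the same discarding of the nonnegative $n=2N$ $\Delta$-term, whose net coefficient is $\frac{1}{8N}$. The only difference is Step 3, where the paper avoids integrals by observing that $u_N:=\frac{1}{4N}-\sum_{n\le 2N}\frac{(-1)^n}{n}$ is nonincreasing (because $\frac{1}{4N}-\frac{1}{4N+4}\ge\frac{1}{2N+1}-\frac{1}{2N+2}$) and tends to $\log 2$. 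Your integration by parts also works, and your crude bound $\int_0^1\frac{x^{2N+2}}{(1+x)^3}\,dx\le\frac{1}{2N+3}$ already suffices for every $N\ge 1$: the required inequality reduces to this integral being at most $\frac{N+2}{8N}$, so no small-$N$ checks are needed.
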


\begin{proof}[Proof of Lemma \ref{lem S_2k decomposition}]
For all \(t,\theta \in [0,1]\) and all \(n \in \mathbb{N}^*\),
\begin{align*}
    &a_n(t,\theta)+a_{-n}(t,\theta) \\
    &= \frac{1}{2}\left[2-\cos(2\pi nt)+\frac{\cos(2\pi (nt+\theta))+\cos(2\pi (nt-\theta))}{2}
    -\Delta(nt,\theta) -\Delta(-nt,\theta)\right]\\
    &= \frac{1}{2}\left[2-(1-\cos(2\pi\theta))\cos(2\pi nt)
    -\Delta(nt,\theta) -\Delta(-nt,\theta)\right]\\
    &= -\frac{1-\cos(2\pi\theta)}{2}(\cos(2\pi nt)-1)+\frac{1+\cos(2\pi \theta)}{2}
    -\frac{\Delta(nt,\theta) +\Delta(-nt,\theta)}{2}.\\
\end{align*}
Thus, we have the decomposition:
\begin{align}
    S_{2N}(t,\theta)
    &=\sum_{n=1}^{2N} \frac{1-(-1)^n}{n}
    +\frac{a_{2N}(t,\theta)+a_{-2N}(t,\theta)}{4N}-\sum_{n=1}^{2N}
     \frac{a_n(t,\theta)+a_{-n}(t,\theta)}{n} \notag 
    \\
    \label{eq1 S_2N}
    &=\left[\frac{1-\cos(2\pi \theta)}{2}\right]\Bigg(\sum_{n=1}^{2N}\frac{1-(-1)^n}{n}-\frac{\cos(4\pi Nt)-1}{4N}
    +\sum_{n=1}^{2N}\frac{\cos(2\pi nt)-1}{n}\Bigg) 
    \\
    \label{eq2 S_2N}
    &+ \left[\frac{1+\cos(2\pi \theta)}{2}\right]
    \Bigg(\sum_{n=1}^{2N}\frac{1-(-1)^n}{n}+\frac{1}{4N}-
    \sum_{n=1}^{2N}\frac{1}{n}\Bigg) 
    \\
    &+\frac{\Delta(2Nt,\theta) +\Delta(-2Nt,\theta)}{8N}+\sum_{n=1}^{2N-1}
    \frac{\Delta(nt,\theta) +\Delta(-nt,\theta)}{2n}.
    \label{eq3 S_2N}
\end{align}
Now, for \eqref{eq2 S_2N}, since $ \frac{1}{4N}-\frac{1}{4N+4}\geq \frac{1}{2N+1}-\frac{1}{2N+2}$, we have
\[
    \frac{1}{4N}-
    \sum_{n=1}^{2N}\frac{(-1)^n}{n}
\geq
    \frac{1}{4N+4}-
    \sum_{n=1}^{2N+2}\frac{(-1)^n}{n}
\geq  
-\sum_{n=1}^{\infty}\frac{(-1)^n}{n} =
    \log 2 .
\] 
For \eqref{eq3 S_2N}, since $\Delta(2Nt,\theta) +\Delta(-2Nt,\theta)\geq 0$, we conclude the desired bound.
\end{proof}

To prove Proposition \ref{thm S_2N > marge (t,theta)}, it remains to derive lower bounds for \eqref{eq1 S_2N} and \eqref{eq3 S_2N}. For this, we use the following two propositions.
\begin{pro}\label{pro > sin^4}
Let \(N\geq1\) be an integer and let \(t\in[0,1]\). We have
    \[\sum_{n=1}^{2N} \frac{\cos(2\pi nt)-(-1)^n}{n} - \frac{\cos(4\pi N t)-1}{4N}
    \geq \sin^4\left(\frac{\pi}{3}(t-1/2)\right).\]
\end{pro}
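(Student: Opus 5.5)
The plan is to recognize the left-hand side as a difference of values of a single trigonometric sum, and to write that difference as an integral with a nonnegative integrand. First I rewrite the correction term. Since $\cos(4\pi N\cdot\tfrac12)=1$ and $(-1)^{2N}=1$, the left-hand side equals $F_N(t):=H(2\pi t)-H(\pi)$, where
\[
H(x):=\sum_{n=1}^{2N-1}\frac{\cos(nx)}{n}+\frac{\cos(2Nx)}{4N},
\]
that is, the partial sum with the last term given weight $1/2$. Both sides are invariant under $t\mapsto 1-t$, so I may assume $t\in[0,1/2]$, i.e.\ $x=2\pi t\in[0,\pi]$.

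The key step is to compute $H'$ in closed form. I will use $\sum_{n=1}^{M}\sin(nx)=\frac{\cos(x/2)-\cos((M+\frac12)x)}{2\sin(x/2)}$ and subtract $\tfrac12\sin(Mx)$, with $M=2N$. The identity $\cos((M+\tfrac12)x)+\sin(Mx)\sin(x/2)=\cos(Mx)\cos(x/2)$ then gives
\[
-H'(x)=\sum_{n=1}^{2N-1}\sin(nx)+\tfrac12\sin(2Nx)=\frac{1}{2}\cot\left(\frac x2\right)\bigl(1-\cos(2Nx)\bigr)\geq 0 \quad\text{on }(0,\pi].
\]
Hence
\[
F_N(t)=\int_{x}^{\pi}\tfrac12\cot(u/2)\bigl(1-\cos 2Nu\bigr)\,du .
\]
I substitute $v=\pi-u$, using $\cot(u/2)=\tan(v/2)\geq v/2$ and $\cos(2N(\pi-v))=\cos(2Nv)$. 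With $L:=\pi-x=2\pi|t-\tfrac12|\in[0,\pi]$, this gives
\[
F_N(t)\geq \int_0^L \frac v4\bigl(1-\cos 2Nv\bigr)\,dv=\frac{1}{16N^2}J(2NL),\qquad J(y):=\int_0^y w(1-\cos w)\,dw=\frac{y^2}{2}-y\sin y+1-\cos y .
\]

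It remains to compare this with the target. The target satisfies $\sin^4\bigl(\tfrac{\pi}{3}(t-\tfrac12)\bigr)\le (L/6)^4=y^4/(1296\cdot 16N^4)$ with $y=2NL$. So it suffices to show $J(y)\geq y^4/(1296N^2)$ for $0\le y\le 2\pi N$. Since $y\le 2\pi N$ gives $y^4/N^2\le 4\pi^2y^2$, it is enough to prove the $N$-free inequality
\[
J(y)\geq \frac{\min(y^4,\,4\pi^2y^2)}{1296}\qquad (y\geq 0).
\]
This is the step requiring actual (elementary) estimation, and I expect it to be the main, if routine, obstacle. I split into two regimes. For $y\le \pi$, I use $1-\cos w\geq 2w^2/\pi^2$, which gives $J(y)\geq y^4/(2\pi^2)$, far better than needed. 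For $y\geq\pi$, I use that $J$ is increasing together with $J(y)\ge y^2/2-y$, so $J(y)\geq c\,y^2$ with an explicit $c$ much larger than $4\pi^2/1296$; near $y=\pi$ I fall back on monotonicity, since $J(\pi)=\pi^2/2+2$. The constants have a lot of room, so I only need to check them carefully at the crossover $y\approx\pi$.
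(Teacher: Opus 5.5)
Your proof is correct, and its first half is the paper's. The paper also differentiates the left-hand side $S(t)$ to get $S'(t)=-2\pi\sin^2(2\pi Nt)\cot(\pi t)$. Using $S(1/2)=0$ it obtains $S(t)=2\pi\int_0^{|t-1/2|}\sin^2(2\pi Nu)\tan(\pi u)\,du$, which is your $\int_0^L\frac12\tan(v/2)(1-\cos 2Nv)\,dv$ after the change of variables $v=2\pi u$.

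The routes differ in how this integral is bounded from below. The paper keeps $\tan(\pi u)$ and splits into two cases, with $U=|t-\frac12|$:
\begin{itemize}
\item If $U\le 3/(4N)$, it uses $\sin^2(2\pi Nu)\ge\sin^2(2\pi u)$ near $0$.
\item If $U>3/(4N)$, it uses convexity of $\tan$ to bound the integral over each window of length $1/(2N)$ centred at $k/(2N)$ from below by $\frac{1}{4N}\tan(\pi k/(2N))$, and then compares the resulting Riemann sum with an integral.
\end{itemize}
It then shrinks to $[0,U/3]$ so that both cases land on the exact identity $\int_0^{U/3}\frac{\sin^2(2\pi u)\tan(\pi u)}{2}\,du=\frac{1}{2\pi}\sin^4(\pi U/3)$. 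This is where the particular $\sin^4$ shape of the right-hand side comes from.

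You instead linearize $\tan(v/2)\ge v/2$, integrate $v(1-\cos 2Nv)$ exactly, bound the target by $(L/6)^4$, and reduce to an $N$-free inequality for $J(y)=\frac{y^2}{2}-y\sin y+1-\cos y$. Both regimes check out with ample room. For every $y\ge\pi$ the bound $J(y)\ge(\frac12-\frac{1}{\pi})y^2$ already suffices, so your monotonicity fallback near $y=\pi$ is not needed.

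Your route is shorter and avoids the case split and the convexity/Riemann-sum step. It also makes the $N$-dependence explicit: it gives $S(t)\ge\min\bigl(8\pi^2N^2|t-\frac12|^4,\,(\frac12-\frac{1}{\pi})\pi^2|t-\frac12|^2\bigr)$, which is stronger than the statement. The paper's argument uses more of the integrand's structure (the growth and convexity of $\tan$). However, the $\sin^4$ form it produces is immediately weakened to $\frac{1}{16}|t-\frac12|^4$ in the proof of Theorem~\ref{THM asymp t theta}, so nothing is lost with your cruder but more transparent estimate.
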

\begin{pro}\label{pro Delta(ht)> C}
Let \(N\geq1\) be an integer and let \(t,\theta\in[0,1]\). Using the notation introduced in \eqref{def Delta (x,theta)}, we have
\[
\sum_{n=1}^N\frac{\Delta(nt,\theta)+\Delta(-nt,\theta)}{n}
\geq \frac{1+\cos(2\pi\theta)}{128}(\log N -4).\]
\end{pro}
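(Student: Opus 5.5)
The plan is to prove a pointwise lower bound on $f(x):=\Delta(x,\theta)+\Delta(-x,\theta)$ valid whenever $x$ is close to an integer, and then show by a pigeonhole argument that the integers $n\le N$ with $nt$ close to an integer carry $\gg \log N$ of harmonic weight. Since $1+\cos(2\pi\theta)=2\cos^2(\pi\theta)$, the target has the shape $c\,\cos^2(\pi\theta)\,(\log N-4)$.

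\emph{Step 1 (pointwise bound near integers).} Write $r=\|x\|$ for the distance from $x$ to $\Z$, and set $u=|\cos\pi\theta|$, $s=|\sin\pi x|$, $a=|\cos\pi(\theta+x)|$, $b=|\cos\pi(\theta-x)|$. Then $s\le\pi r$ and $|a-u|,|b-u|\le\pi r$. I claim there is an absolute $\delta_0>0$ (say $\delta_0=1/16$) and an absolute $c_0>0$ such that $f(x)\ge c_0u^2$ whenever $r\le\delta_0$. The proof splits into two cases.
\begin{itemize}
\item If $u\ge 4\pi r$, then $a-s\ge u-2\pi r\ge u/2$. Hence $\Delta(x,\theta)\ge u^2/4$.
\item If $u<4\pi r\le 4\pi\delta_0$, then $\sin\pi\theta\ge 1/2$ and $a+b\le 2u+2\pi r\le 10\pi r$. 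We also have the identity $a^2-b^2=-\sin(2\pi\theta)\sin(2\pi x)$ and the bound $|\sin 2\pi x|\ge 4r$. Together these give
\[
|a-b|=\frac{|a^2-b^2|}{a+b}\ge \frac{2\cdot\tfrac12\, u\cdot 4r}{10\pi r}=\frac{2u}{5\pi}.
\]
Finally $(s-a)^2+(s-b)^2\ge \tfrac12(a-b)^2$, so $f(x)\ge \frac{2u^2}{25\pi^2}$.
\end{itemize}
Some care is needed: one cannot hope for such a bound near $x\equiv 1/2$, where $f$ is only of size $\cos^4\pi\theta$. This is why the argument uses only $x$ near $\Z$.

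\emph{Step 2 (many good $n$).} Let $K=\lceil 1/\delta_0\rceil$. Fix $n\le N/K$ and consider the $K+1$ points $0,nt,2nt,\dots,Knt$ modulo $1$. By pigeonhole there is some $1\le j=j(n)\le K$ with $\|j n t\|\le\delta_0$. The map $n\mapsto m=j(n)n$ sends each $n$ to an $m\le N$ with $\|mt\|\le\delta_0$, and it is at most $K$-to-one. Therefore
\[
\sum_{\substack{m\le N\\ \|mt\|\le\delta_0}}\frac1m\ \ge\ \frac1K\sum_{n\le N/K}\frac{1}{Kn}\ \ge\ \frac{1}{K^2}\bigl(\log N-\log K\bigr).
\]
Dropping the nonnegative terms with $\|nt\|>\delta_0$ and combining with Step 1 gives
\[
\sum_{n=1}^N\frac{f(nt)}{n}\ \ge\ c_0K^{-2}\cos^2(\pi\theta)\,(\log N-\log K).
\]
This is the claimed inequality up to the values of the constants.

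\emph{Main obstacle.} The real difficulty is Step 1. The naive route compares $\sin^2$ with $\cos^2$, and it loses a square root, producing $\cos^4\pi\theta$ instead of $\cos^2\pi\theta$. The fix is the case split above: when $\theta$ is near $1/2$, use the \emph{antisymmetry} between $x$ and $-x$ (through $a-b$); otherwise use the smallness of $|\sin\pi x|$. What remains is routine but fiddly: optimizing $\delta_0$, $c_0$ and $K$ (for example, taking $\delta_0$ with $\log K\le 4$ and checking $c_0/K^2\ge 1/64$, or arguing directly with $\frac{1+\cos 2\pi\theta}{128}=\frac{\cos^2\pi\theta}{64}$) to land exactly on the constants $1/128$ and $-4$. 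If the stated constants are too tight for this crude pigeonhole, I would refine the counting, for instance by applying Dirichlet's theorem on each dyadic block.
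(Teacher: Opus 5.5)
There is a genuine gap: your Steps 1 and 2 are correct, but they prove a much weaker inequality than the one stated. The case split in Step~1 checks out, and the at-most-$K$-to-one map $n\mapsto j(n)n$ is fine. With your parameters $\delta_0=1/16$, $K=16$, $c_0=2/(25\pi^2)$, however, they only give
\[
\sum_{n=1}^N\frac{\Delta(nt,\theta)+\Delta(-nt,\theta)}{n}\ \ge\ \frac{\cos^2(\pi\theta)}{3200\pi^2}\,\bigl(\log N-\log 16\bigr).
\]
The statement asks for the coefficient $\frac{1+\cos(2\pi\theta)}{128}=\frac{\cos^2(\pi\theta)}{64}$, which is roughly $500$ times larger. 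The explicit constants are part of the claim: they are used in the inequality $1/16+(4+\log 3\pi)/128\le\log 2$ in the proof of the lower bound for $S_{2N}(t,\theta)$, and they fix the stated admissible value of $C_3$.

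The tuning you suggest cannot close this gap. Your counting costs a factor $K^{-2}$. The pigeonhole only guarantees $\|jnt\|\le 1/K$, so your $c_0$ must be valid on $\|x\|\le 1/K$. Taking $\theta=0$ and $x=\min(1/K,1/4)$ forces $c_0\le 2\bigl(1-\sin(2\pi\min(1/K,1/4))\bigr)$, and this is $<K^2/64$ for every $K\ge1$. So $c_0/K^2\ge 1/64$ is impossible in your scheme, and the refinement you only gesture at is exactly the missing step. In particular, the real difficulty here is the counting, not Step~1.

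The paper needs no Diophantine input. Its key lemma shows that for \emph{every} real $x$ some $y\in\{\pm x,\pm 2x\}$ satisfies $\Delta(y,\theta)\ge\frac{2-\sqrt3}{4}\cos^2(\pi\theta)$. The doubling $x\mapsto 2x$ is what handles $x$ near $1/2$, where, as you observe, $\Delta$ is only of size $\cos^4(\pi\theta)$. Set $C=\cos^2(\pi\theta)/16$. Each $n\in(N/4,N/2]$ then contributes at least $C$ through the distinct indices $n,2n\le N$, so $C+\sum_{n\le k}\bigl(\Delta(nt,\theta)+\Delta(-nt,\theta)\bigr)\ge Ck/4$ for every $k$. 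Abel summation then gives exactly $\frac C4(\log N-4)$.

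If you want to keep your near-integer route, both steps must be sharpened.
\begin{itemize}
\item Pointwise bound: reduce to $\cos\pi\theta\ge 0$ and $x\in[0,1/4]$. Then $\Delta(-x,\theta)$ has no cancellation in $\cos\pi(\theta-x)$, and $1-\sin\pi\theta\le\cos\pi\theta$ gives in one line $f(x)\ge\cos^2(\pi\theta)\bigl(1-\sin 2\pi\|x\|\bigr)$ for $\|x\|\le 1/4$.
\item Counting: placing $0,t,\dots,Mt$ in $L$ arcs gives $\#\{1\le n\le M:\|nt\|<1/L\}\ge\lceil (M+1)/L\rceil-1$, i.e.\ harmonic density $1/L$ rather than $K^{-2}$.
\end{itemize}
With $L=10$ and partial summation this yields $\sum_{n\le N}f(nt)/n\ge 0.041\cos^2(\pi\theta)(\log N-2.83)$, which does imply the stated inequality.
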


\begin{proof}[Proof of Proposition \ref{thm S_2N > marge (t,theta)}]
First, combining Lemma \ref{lem S_2k decomposition},
Proposition \ref{pro > sin^4} and
Proposition \ref{pro Delta(ht)> C}, we obtain the following lower bound for all $t, \theta\in [0,1]$,
\begin{align*}
    S_{2N}(t,\theta)
    &\geq  
    \left[\frac{1+\cos(2\pi \theta)}{2}\right]\left(\log 2 +\frac{1}{128}(\log(2N-1)-4)\right)
    +\left[\frac{1-\cos(2\pi \theta)}{2}\right]\sin^4\left(\frac{\pi}{3}(t-1/2)\right).
\end{align*}
Since $\sin^4\left(\frac{\pi}{3}(t-1/2)\right)\leq \sin^4(\pi/6) = 1/16$ and $1/16+(4+\log 3\pi)/128\leq \log 2$, we obtain 
\begin{align*}
    S_{2N}(t,\theta)
    &\geq  
    \frac{\cos(\pi\theta)^2}{128}\log(3\pi(2N-1))
    +\sin^4\left(\frac{\pi}{3}(t-1/2)\right),
\end{align*} as desired.
\end{proof}

\subsubsection{Proof of Proposition \ref{pro > sin^4}}
In this subsection, we show a slight improvement of \cite[Lemma 4.2]{ABL21} that allows us to get a small gain when $t\ne 1/2$. 
To prove Proposition~\ref{pro > sin^4}, it suffices to combine the following lemmas:
\begin{lem} Let \(N\geq1\) be an integer and let \(t\in[0,1]\). We have
    \[
    \sum_{n=1}^{2N} \frac{\cos(2\pi nt)-(-1)^n}{n} - \frac{\cos(4\pi N t)-1}{4N}
    =2\pi \int_{0}^{|t-1/2|} \frac{\sin(2N\pi u)^2\sin(\pi u)}{\cos(\pi u)}du.
    \]
\end{lem}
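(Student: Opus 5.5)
We will substitute $u=t-\tfrac12\in[-\tfrac12,\tfrac12]$, check that the left-hand side is an even function of $u$ vanishing at $u=0$, and compute its derivative in closed form. The derivative will turn out to be exactly the integrand, so the identity follows by integrating from $0$ to $|u|$.

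\emph{Step 1: reduction to a function of $u$.} Since $\cos(2\pi n t)=\cos(2\pi n u+\pi n)=(-1)^n\cos(2\pi n u)$ and $\cos(4\pi N t)=\cos(4\pi N u)$, the left-hand side equals
\[
F(u):=\sum_{n=1}^{2N}\frac{(-1)^n\left(\cos(2\pi n u)-1\right)}{n}-\frac{\cos(4\pi N u)-1}{4N}.
\]
This $F$ is even, smooth, and satisfies $F(0)=0$. The right-hand side depends only on $|u|$. It therefore suffices to prove the identity for $u\in[0,\tfrac12]$, which we do by showing that $F'(u)=2\pi\tan(\pi u)\sin^2(2N\pi u)$ on $[0,\tfrac12)$.

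\emph{Step 2: computing $F'$.} Differentiating term by term gives
\[
F'(u)=-2\pi\sum_{n=1}^{2N}(-1)^n\sin(2\pi n u)+\pi\sin(4\pi N u).
\]
Write $x=\pi u$ and use $2\cos x\,\sin(2nx)=\sin((2n+1)x)+\sin((2n-1)x)$. The alternating sum then telescopes:
\[
\sum_{n=1}^{2N}(-1)^n\sin(2nx)=\frac{\sin((4N+1)x)-\sin x}{2\cos x}.
\]
Expanding $\sin((4N+1)x)=\sin(4Nx)\cos x+\cos(4Nx)\sin x$, the two $\sin(4Nx)$ terms cancel, and we obtain
\[
F'(u)=\pi\tan x\,\left(1-\cos(4Nx)\right)=2\pi\tan(\pi u)\sin^2(2N\pi u).
\]

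\emph{Step 3: integration.} For $0\le u<\tfrac12$, integrating from $0$ and using $F(0)=0$ gives
\[
F(u)=2\pi\int_0^{u}\frac{\sin^2(2N\pi v)\sin(\pi v)}{\cos(\pi v)}\,dv.
\]

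The only delicate point is the endpoint $u=\tfrac12$, i.e. $t\in\{0,1\}$. There $\cos(\pi v)$ vanishes to first order. However, $\sin(2N\pi v)$ also vanishes at $v=\tfrac12$, so $\sin^2(2N\pi v)$ vanishes to second order and the integrand stays bounded near $\tfrac12$. The integral is therefore a continuous function of $u$ on $[0,\tfrac12]$. Since $F$ is continuous as well, letting $u\to\tfrac12$ extends the identity to $u=\tfrac12$. Finally, evenness of $F$ gives the case $u<0$, which completes the proof.
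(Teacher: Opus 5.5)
Your proof is correct and is essentially the paper's argument: compute the derivative in closed form, use that the expression vanishes at $t=1/2$, integrate, and conclude by parity. The paper differentiates in $t$ and uses $\sum_{n=1}^{2N}\sin(2\pi nt)=\sin(2\pi Nt)\sin(\pi(2N+1)t)/\sin(\pi t)$ before substituting $u=t-1/2$, whereas you shift first and telescope the alternating sum; this is only a cosmetic difference, and your explicit treatment of the endpoints $t\in\{0,1\}$, where the closed-form derivative has a removable singularity, is a detail the paper leaves implicit.
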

\begin{proof}
     Let us define for $t \in [0,1]$, 
     \[
     S(t) = \sum_{n=1}^{2N} \frac{\cos(2\pi nt)-(-1)^n}{n} - \frac{\cos(4\pi N t)-1}{4N}.
     \]
     Then, as in the proof of \cite[Lemma 4.2]{ABL21}, we determine the sign of \(S'\) 
\begin{align*}
     S'(t)&= \sum_{n=1}^{2N} -2\pi \sin(2\pi nt)+ \pi\sin(4\pi N t)\\
     &=  -2\pi\frac{\sin(2\pi N t)\sin(\pi(2N+1)t) }{\sin(\pi t)}+ 2\pi\sin(2\pi N t)\cos(2\pi Nt)\\
     &=  -2\pi\frac{\sin(2\pi N t)^2\cos(\pi t)
     }{\sin(\pi t)}.\\
\end{align*}

Now, we can conclude since $S(1/2) = 0$ and the change of variable $u = x-1/2$ gives
\[
S(t) = S(1/2)+\int_{1/2}^{t} -2\pi\frac{\sin(2\pi N x)^2\cos(\pi x)}{\sin(\pi x)} dx = 2\pi\int_{0}^{t-1/2} \frac{\sin(2\pi N u)^2\sin(\pi u)}{\cos(\pi u)} du\geq 0.
\]
The parity concludes the proof.
\end{proof}

\begin{lem}Let \(N\geq1\) be an integer and let \(U\in[0,1/2]\). We have
    \[\int_{0}^U \frac{\sin(2N\pi u)^2\sin(\pi u)}{\cos(\pi u)}du
    \geq
    \int_{0}^{U/3} 
   \frac{\sin(2\pi u)^2\sin(\pi u)}{2\cos(\pi u)}du
    = \frac{1}{2\pi}\sin^4\left(\frac{\pi U}{3}\right).\]
\end{lem}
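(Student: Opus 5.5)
The plan is to discard the factor $\sin(2\pi u)^2$ on the right and compare both sides with $\int \tan(\pi u)\,du$. Write $f(u):=\sin(\pi u)/\cos(\pi u)=\tan(\pi u)$; it is nonnegative and increasing on $[0,1/2)$. The closing equality is a direct computation. Since $\sin(2\pi u)^2\tan(\pi u)=4\sin^3(\pi u)\cos(\pi u)$, we get
\[
\int_0^{U/3}\frac{\sin(2\pi u)^2\sin(\pi u)}{2\cos(\pi u)}\,du=\int_0^{U/3}2\sin^3(\pi u)\cos(\pi u)\,du=\frac{1}{2\pi}\sin^4\Big(\frac{\pi U}{3}\Big).
\]
Because $\sin(2\pi u)^2\le 1$, it therefore suffices to prove
\[
\int_0^U \sin(2N\pi u)^2 f(u)\,du\;\ge\;\frac12\int_0^{U/3} f(u)\,du ,
\]
and in the small-$U$ regime I may instead keep the $\sin(2\pi u)^2$ factor if needed.

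For the main case I will cut $[0,U]$ into the full periods $I_k=[k/(2N),(k+1)/(2N)]$ of $\sin(2N\pi u)^2$, for $0\le k<K$, where $K=\lfloor 2NU\rfloor$. On each $I_k$ the function $\sin(2N\pi u)^2$ has mean $1/2$, and $f$ is increasing. Hence for $k\ge1$,
\[
\int_{I_k}\sin(2N\pi u)^2 f(u)\,du\ \ge\ f\Big(\frac{k}{2N}\Big)\frac{|I_k|}{2}\ \ge\ \frac12\int_{I_{k-1}}f(u)\,du .
\]
Summing over $1\le k\le K-1$ and dropping the remaining nonnegative pieces gives $\int_0^U\sin(2N\pi u)^2f\ge \frac12\int_0^{(K-1)/(2N)}f$. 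Since $2NU<K+1$, the inequality $(K-1)/(2N)\ge U/3$ holds as soon as $3(K-1)\ge K+1$, i.e. $K\ge2$. So the case $K\ge2$ is settled.

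The remaining case $K\le1$, i.e. $U<1/N$, is the main obstacle, since the shifting-by-one-period trick then loses everything. Here the interval is short, $2N\pi u<2\pi$, and I would argue more directly using the $\sin(2\pi u)^2$ factor on the right. The first thing I would try is to restrict the left-hand integral to $[U/3,U]$, where $f(u)\ge f(U/3)$. This gives a lower bound $f(U/3)\int_{U/3}^{U}\sin(2N\pi u)^2\,du$. The remaining integral is an explicit elementary quantity comparable to $U$, since the window $[2N\pi U/3,\,2N\pi U]$ has length less than $4\pi/3$ and cannot sit entirely near zeros of $\sin$.

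On the right I would use $\frac12\int_0^{U/3}\sin(2\pi u)^2 f\le \frac12 f(U/3)\int_0^{U/3}\sin(2\pi u)^2\,du$. Since $U/3\le 1/6$, the last integral is at most $(2\pi)^2(U/3)^3/3$, which is tiny compared with $U$. The inequality then reduces to checking that $\int_{U/3}^U \sin(2N\pi u)^2\,du\ge c\,U$ for an explicit $c$ exceeding $\frac{2\pi^2}{81}U^2\le \frac{\pi^2}{162}$ on this range. I expect this to follow from an elementary, if slightly tedious, computation of $\int\sin^2$ over a window of length $\tfrac23\cdot 2N\pi U$. Should it be tight near $U\to0$, I would fall back on the small-angle bounds $\sin x\ge 2x/\pi$ and $\tan x\ge x$ on the appropriate ranges.
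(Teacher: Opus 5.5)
Your strategy works, and the case $U\ge 1/N$ is rigorous, but the route differs from the paper's and the case $U<1/N$ is unfinished.

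\textbf{Comparison with the paper.} The paper splits at $U=3/(4N)$.
\begin{itemize}
\item For $U\le 3/(4N)$ it uses only the pointwise bound $\sin(2N\pi u)^2\ge \sin(2\pi u)^2$ on $[0,1/(4N)]$ together with $\min(U,1/(4N))\ge U/3$. No computation is needed.
\item For $U\ge 3/(4N)$ it integrates over windows of length $1/(2N)$ \emph{centred} at $k/(2N)$. Convexity of $\tan(\pi u)$, pairing $k/(2N)\pm u$, then gives $\frac{1}{4N}\tan(\pi k/(2N))$ from each window.
\end{itemize}
Centring the windows means the lower bound lags only half a period behind the part of $[0,U]$ actually used. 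That is why the paper's threshold is $3/(4N)$.

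Your monotonicity-only argument is simpler, since it needs no convexity. However, it lags a full period, so it needs $\lfloor 2NU\rfloor\ge 2$, i.e. $U\ge 1/N$. The range $3/(4N)<U<1/N$ is then out of reach of the paper's pointwise trick, so you need your extra device of pulling out $f(U/3)$.

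\textbf{The unfinished case $U<1/N$.} The reduction via $f(U/3)$ is valid. The reason you give for the remaining inequality is wrong, though. When $U\ll 1/N$, $\int_{U/3}^{U}\sin(2N\pi u)^2\,du$ is not comparable to $U$. The window $[2N\pi U/3,\,2N\pi U]$ then sits right next to the zero of $\sin$ at $0$, and the integral is of order $N^2U^3$. So no constant $c$ works uniformly as $U\to 0$, and the small-angle split is compulsory, not a fallback.

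What you must prove is
\[
\int_{U/3}^{U}\sin(2N\pi u)^2\,du\ \ge\ \frac{2\pi^2}{81}U^3 .
\]
With $a=2N\pi U\in(0,2\pi)$, this reads $\int_{a/3}^{a}\sin^2x\,dx\ge a^3/(162N^2)$. It holds with room:
\begin{itemize}
\item For $a\le \pi/2$, the bound $\sin x\ge 2x/\pi$ gives at least $\frac{104}{81\pi^2}a^3$.
\item For $\pi/2\le a\le \pi$, the window contains $[\pi/3,\pi/2]$. This gives at least $\pi/8$, which exceeds $\pi^3/162$.
\item $a>\pi$ forces $N\ge 2$, since $U\le 1/2$. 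The target is then at most $a^3/648<0.39$. The window contains $[2\pi/3,\pi]$, whose integral is $\pi/6-\sqrt3/8\approx 0.31$; this suffices for $a\le 4\pi/3$. For $a\ge 4\pi/3$ the window contains $[2\pi/3,4\pi/3]$, whose integral is $\pi/3-\sqrt3/4\approx 0.61$.
\end{itemize}

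A cleaner alternative is to combine the two approaches. Use the paper's pointwise comparison for $U\le 3/(4N)$. Use your $f(U/3)$ device only for $3/(4N)\le U<1/N$, which forces $N\ge 2$. There the window contains $[2\pi/3,3\pi/2]$, so the integral really is $\gg 1/N$, as you claimed.
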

\begin{proof}
    If $U\leq \frac{3}{4N}$, then the result follows from $\sin(2N\pi u)^2\geq \sin(2\pi u)^2 $ for $u \in 
    [0,\frac{1}{4N} ] $. 
    \[
    \int_{0}^U \frac{\sin(2N\pi u)^2\sin(\pi u)}{\cos(\pi u)}du\geq 
    \int_{0}^{\min\left(U,\frac{1}{4N}\right)} \frac{\sin(2\pi u)^2\sin(\pi u)}{\cos(\pi u)}du
    \geq\int_{0}^{U/3} 
    \frac{\sin(2\pi u)^2\sin(\pi u)}{\cos(\pi u)} du.
    \]

    If $\frac{3}{4N}\leq U$, let us note that since $T(u) = \tan(\pi u)$ is a convex function on $[0,1/2]$, using that \(\sin^2\) is even and periodic, we have 
    \[
    \int_{\frac{k}{2N}-\frac{1}{4N}}^{\frac{k}{2N}+\frac{1}{4N}}  \sin(2N\pi u)^2 T(u)du = 
    \int_{0}^{\frac{1}{4N}}\sin(2N\pi u)^2 \left[
    T\left(\frac{k}{2N} -u\right)+
    T\left(\frac{k}{2N} +u \right)\right] du \geq \frac{1}{4N}T\left(\frac{k}{2N}\right). 
    \] 
    Since $T $ increases on $\left[0,\frac{1}{2}\right]$, we obtain
    \[
    \int_{\frac{1}{4N}}^{\frac{2K+1}{4N}} \sin(2N\pi u)^2 T(u)du
    \geq
    \sum_{k=1}^{K}\frac{1}{4N}T\left(\frac{k}{2N}\right)
    \geq
    \int_{0}^{\frac{K}{2N}}\frac{1}{2}T\left(u\right)du
    \geq \int_{0}^{\frac{K}{2N}}
    \frac{\sin(2\pi u)^2\sin(\pi u)}{2\cos(\pi u)}du.
    \]
    Now, if $\frac{K}{2N}+\frac{1}{4N}\leq U \leq  \frac{K}{2N}+\frac{3}{4N}$ with $K\geq 1$ then $\frac{K}{2N}\geq  \frac{1}{3}\left(\frac{K}{2N}+\frac{3}{4N}\right)\geq U/3$, we get
    \[
    \int_{0}^{U} \sin(2N\pi u)^2 T(u)du
    \geq
    \int_{0}^{\frac{K}{2N}}
    \frac{\sin(2\pi u)^2\sin(\pi u)}{2\cos(\pi u)}du
    \geq
    \int_{0}^{\frac{U}{3}}
    \frac{\sin(2\pi u)^2\sin(\pi u)}{2\cos(\pi u)}du.
    \]
    We conclude by the following identity, for all $U\in[0,1/2]$, 
    \[\int_{0}^{\frac{U}{3}}
    \frac{\sin(2\pi u)^2\sin(\pi u)}{2\cos(\pi u)}du = 
    \int_{0}^{\frac{U}{3}}
    2\sin(\pi u)^3\cos(\pi u)du
     = \frac{1}{2\pi} \sin^4\left(\frac{\pi U}{3}\right).
    \]
\end{proof}

\subsubsection{Study of $\Delta(x,\theta)$} In this section, we prove Proposition \ref{pro Delta(ht)> C} to get a lower bound for \eqref{eq3 S_2N}.
To this end, we need the following lemma.
\begin{lem}\label{D(x)+D(2x)>theta 2}
Let \(x,\theta\in \R \). Using the notation introduced in \eqref{def Delta (x,theta)} there exists $y \in \{-2x,-x,x,2x \}$, such that
\begin{equation*}
\Delta(y,\theta) 
\geq \frac{\cos(\pi\theta)^2}{4}(2-\sqrt{3}).
\end{equation*}
\end{lem}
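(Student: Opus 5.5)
The plan is to reduce $\Delta(y,\theta)$ to a product of two cosines and then use the freedom $y\in\{\pm x,\pm 2x\}$ to keep one factor away from $0$. Write $a=\pi x$, $b=\pi\theta$, and for a given $y$ set $p=|\sin(\pi y)|$, $q=|\cos(\pi(\theta+y))|$, so that $\Delta(y,\theta)=(p-q)^2$. The key identity is
\[
p^2-q^2=\sin^2(\pi y)-\cos^2(b+\pi y)=-\tfrac12\bigl(\cos(2\pi y)+\cos(2b+2\pi y)\bigr)=-\cos(b)\cos(b+2\pi y).
\]
Since $p,q\in[0,1]$ we have $p+q\le 2$, hence
\[
\sqrt{\Delta(y,\theta)}=\frac{|p^2-q^2|}{p+q}\geq \frac{|\cos b|\,|\cos(b+2\pi y)|}{2}.
\]
It would therefore suffice to find $y\in\{\pm x,\pm 2x\}$ with $\cos^2(b+2\pi y)\geq 2-\sqrt3$, because then $\Delta(y,\theta)\geq \frac{\cos^2 b}{4}(2-\sqrt3)$. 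Note that $\frac{2-\sqrt3}{4}=\sin^2(\pi/12)$, so the target constant is consistent with this normalization.

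The four phases available are $b+u$, $b-u$, $b+2u$, $b-2u$ with $u=2a$. I would argue by contradiction: suppose all four lie within angular distance $\delta$ of $\pi/2 \pmod \pi$, where $\cos\bigl(\tfrac{\pi}{2}-\delta\bigr)^2=\sin^2\delta=2-\sqrt3$. Subtracting the conditions on $b+u$ and $b-u$ forces $2u$ to lie within $2\delta$ of $0 \pmod\pi$, so $u$ is close to $0$ or to $\pi/2 \pmod \pi$.

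If $u\approx\pi/2$, then $b+u\approx\pi/2$ forces $b\approx 0 \pmod\pi$. In that case $b+2u\approx \pi\equiv 0$, so its cosine is close to $1$, which is a contradiction once the tolerances are checked against the specific value of $\delta$. If $u\approx 0$, then $b\approx\pi/2$, so $|\cos b|$ is itself small. The crude bound above then loses too much, because the claimed right-hand side is of order $\cos^2 b$ while the product bound only gives something of order $\cos^2 b\cdot\cos^2(\cdot)$.

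This degenerate regime, $\theta$ near $1/2$ with $x$ near an integer, is the step I expect to be the main obstacle. There I would not bound $p+q$ by $2$. Instead I would use that $p=|\sin(\pi y)|$ and $q=|\cos(b+\pi y)|$ are both small when $y\approx 0 \pmod 1$ and $b\approx\pi/2$, so that $p+q$ is comparable to $|\cos b|+|\cos(b+2\pi y)|$. This should recover a bound of the form $\sqrt\Delta\gg |\cos b|$ directly.

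Concretely, I would parametrize $b=\pi/2-\beta$ and $\pi y=\eta$ with $\beta,\eta$ small. Then $p=|\sin\eta|$ and $q=|\sin(\beta-\eta)|$, and I would compare $|\sin\eta|$ with $|\sin(\beta-\eta)|$ for $\eta\in\{\pm\pi x,\pm2\pi x\}$. Choosing the sign of $\eta$ opposite to $\beta$ gives $q\approx|\beta|+|\eta|$ against $p\approx|\eta|$, so $p-q\approx|\beta|\approx|\cos b|$, up to the factors needed to land exactly on $\sin^2(\pi/12)$. The remaining work is to track the tolerances so that the non-degenerate and degenerate cases glue together with the exact constant $(2-\sqrt3)/4$. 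If they do not glue cleanly, I would fall back on verifying the inequality by an explicit two-variable case split on $(u \bmod \pi,\, b \bmod \pi)$.
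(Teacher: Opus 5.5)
There is a genuine gap. Your identity $p^2-q^2=-\cos(\pi\theta)\cos(\pi\theta+2\pi y)$ is correct, and so is the bound $\Delta(y,\theta)\geq\frac14\cos^2(\pi\theta)\cos^2(\pi\theta+2\pi y)$. What fails is the claim that this bound suffices everywhere outside the degenerate regime $\theta\approx\frac12$, $x\approx0$.

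Take $\theta=0$ and $x=\frac16$. Then $|\cos(\pi\theta)|=1$, yet for $y\in\{\pm\frac16,\pm\frac13\}$ the phases $\pi\theta+2\pi y$ are $\pm\frac{\pi}{3},\pm\frac{2\pi}{3}$. All of them have $\cos^2=\frac14<2-\sqrt3$, so no admissible $y$ meets your sufficient condition.

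This is exactly where your branch ``$u\approx\pi/2$'' breaks. Here $u=\pi/3$ lies within $\delta$ of $\pi/2$ and $b=0$, but $b+2u=2\pi/3$ still lies within $\delta$ of $\pi/2$, so there is no contradiction. In general, the chain $u\approx\pi/2\Rightarrow b\approx0\Rightarrow b+2u\approx\pi$ only gives that $b+2u$ is within $2\delta$ of $0 \pmod \pi$. A contradiction would need $\frac{\pi}{2}-2\delta>\delta$, i.e. $\sin^2\delta<\frac14$. Your threshold is $\sin^2\delta=2-\sqrt3>\frac14$, so the tolerances cannot be made to check.

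The lemma itself holds at this point: $\Delta(\frac16,0)=\frac{2-\sqrt3}{2}$, twice the target. The loss comes entirely from the step $p+q\leq2$, since here $p+q=\frac{1+\sqrt3}{2}$. So besides the degenerate corner, there is a second bad region around $(\theta,x)=(0,\pm\frac16)$ that your dichotomy does not detect. You also treat the degenerate corner only heuristically, with ``$\approx$''. Finally, the fallback to ``an explicit two-variable case split'' restates what has to be proved rather than proving it.

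The paper avoids the loss by never dividing by $p+q$. By periodicity and the symmetries $x\mapsto-x$ and $(x,\theta)\mapsto(-x,-\theta)$, it reduces to $x\in[0,\frac12]$ and $\theta=\vartheta+\frac12$ with $\vartheta\in[0,\frac12]$. Then $\cos^2(\pi\theta)=\sin^2(\pi\vartheta)$ and $\Delta(y,\theta)=(|\sin\pi y|-|\sin\pi(y+\vartheta)|)^2$. According to signs, this equals exactly either $(1-\cos\pi\vartheta)(1+\cos(2\pi y+\pi\vartheta))$ or $(1+\cos\pi\vartheta)(1-\cos(2\pi y+\pi\vartheta))$.

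For $\vartheta\leq\frac16$, the paper picks $y\in\{x,-2x\}$ with $y \bmod 1\in[0,\frac13]$. The first identity then applies, with $1+\cos(2\pi y+\pi\vartheta)\geq1+\cos\frac{5\pi}{6}$ and $1-\cos\pi\vartheta\geq\frac12\sin^2\pi\vartheta$.

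For $\vartheta\geq\frac16$, both prefactors are $\geq\frac12\sin^2\pi\vartheta$. Choosing $y$ with $y \bmod \frac12\in[0,\frac16]$ then forces $|\cos(2\pi y+\pi\vartheta)|\leq\cos\frac{\pi}{6}$.

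If you want to keep your route, you need at least a sharper control of $p+q$. For instance, $(p+q)^2\leq2(p^2+q^2)=2\bigl(1-\sin(\pi\theta)\sin(\pi\theta+2\pi y)\bigr)$ gives $\frac18$ at $(0,\pm\frac16)$. You would then still need a genuine quantitative verification over all $(\theta,x)$, including the degenerate corner.
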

\begin{proof}[Proof of Lemma \ref{D(x)+D(2x)>theta 2}]
By 1-periodicity of $\Delta(x,\theta)$ as a function of $x$ and $\theta$,
using the symmetry $\Delta(x,\theta)=\Delta(-x,-\theta)$ and the symmetry of the problem $y \in \{-2x,-x,x,2x \}$, we can assume that $x\in[0,1/2]$ and $\theta = \vartheta+\frac{1}{2}\in[1/2,1]$. Thus we have 
\[ 
\Delta(x,\vartheta+1/2) = (|\sin(\pi x)|-|\sin(\pi (x+\vartheta))|)^2.
\]
Depending on the signs of
\(\sin(\pi x)\) and \(\sin(\pi(x+\vartheta))\),
we obtain one of the following two expressions:
\begin{align}\label{eq  Delta 1}
\big(\sin(\pi x)-\sin(\pi (x+\vartheta))\big)^2
    &=
    \big(2\sin(\pi\vartheta/2)^2 \sin(\pi x)-2\sin(\pi \vartheta/2)\cos(\pi\vartheta /2)\cos(\pi x)\big)^2
    \\
    &=4\sin(\pi\vartheta/2)^2
    \cos(\pi x+\pi\vartheta /2)^2 \notag
    \\
    &= (1-\cos(\pi \vartheta))(1
    +\cos(2\pi x+\pi \vartheta)),\notag
\end{align}
and
\begin{align}\label{eq  Delta 2}
\big(\sin(\pi x)+\sin(\pi (x+\vartheta))\big)^2
&= (1+\cos(\pi \vartheta))(1
    -\cos(2\pi x+\pi \vartheta)).
\end{align}

\underline{Case $1$ :  $\vartheta \in\left[0,\frac{1}{6}\right]$.} If $x\in[1/3,1/2]$ then $-2x\in[-1,-2/3]$, so that, by \(1\)-periodicity, we may assume that \(y\in[0,1/3]\). 
Hence, identity \eqref{eq Delta 1} applies, because $\sin(\pi y)\geq0$ and $\sin(\pi (y+\vartheta))\geq0$. Since $$ (1-\cos(\pi \vartheta))=2\sin(\pi\vartheta/2)^2\geq 2\sin(\pi\vartheta/2)^2\cos(\pi\vartheta/2)^2 =\frac{\sin(\pi\vartheta)^2}{2},$$
we obtain the desired bound,
\begin{align*}
\Delta(y,\vartheta+1/2) &= 
(1-\cos(\pi \vartheta))(1
    +\cos(2\pi y+\pi \vartheta)) \geq
    \frac{\sin(\pi\vartheta)^2}{2}
    (1+\cos(2\pi/3+\pi/6)).
\end{align*}

\underline{Case $2$ : $\vartheta \in\left[\frac{1}{6},\frac{1}{2}\right]$.} 
Since $1\pm\cos(\pi \vartheta)\geq \frac{\sin(\pi\vartheta)^2}{2} =\frac{\sin(\pi\vartheta+\pi)^2}{2}  $, both \eqref{eq Delta 1} and \eqref{eq Delta 2} imply
\[
\Delta(x,\vartheta+1/2) \geq 
\frac{\sin(\pi\vartheta)^2}{2}
(1-|\cos(2\pi x+\pi \vartheta)|).
\]
We choose \(y\in\{x,-x,2x,-2x\}\) such that
\(y \bmod \frac12 \in [0,1/6]\), which implies $2 y+ \vartheta \bmod 1 \in[1/6,1/2+1/3]$. Thus, we conclude
\[
\Delta(y,\vartheta+1/2)\geq 
\frac{\sin(\pi\vartheta)^2}{2}
(1-|\cos(\pi/6)|)
= \frac{\sin(\pi\vartheta)^2}{2}\frac{2-\sqrt{3}}{2}.
\]

\end{proof}

\begin{proof}[Proof of Proposition \ref{pro Delta(ht)> C}] Let us put $C := \frac{\cos(\pi\theta)^2}{16}$. By Lemma \ref{D(x)+D(2x)>theta 2}, for all $\theta, x\in\R,$ we have
\begin{equation}
\Delta(x,\theta)+\Delta(-x,\theta)
+\Delta(2x,\theta)+\Delta(-2x,\theta) 
\geq
    \frac{\cos(\pi\theta)^2}{4}(2-\sqrt{3})\geq  C.
\end{equation} 
Hence, for all $N\geq 1$, since the interval \((N/4,N/2]\) contains at least \(\lfloor N/4 \rfloor\) integers, we have
\[C+\sum_{1\leq |n|\leq N}\Delta(nt,\theta)
\geq C+
\sum_{N/4< n\leq  N/2}\Delta(nt,\theta)+\Delta(-nt,\theta)+\Delta(2nt,\theta)+\Delta(-2nt,\theta)
\geq 
\frac{CN}{4}.\] We define
$\delta_1 = C+\Delta(t,\theta)+\Delta(-t,\theta)$ and for $n\geq 2$, $\delta_n = \Delta(nt,\theta)+\Delta(-nt, \theta)$. 
Using summation by parts, since
$\sum_{n=1}^k \delta_n \geq Ck/4,$
we obtain
\begin{align*}
\sum_{n=1}^N \frac{\delta_n}{n} = \frac{1}{N}\sum_{n=1}^N \delta_n
+\sum_{k=1}^{N-1}\frac{1}{k(k+1)}\sum_{n=1}^k \delta_n\geq \frac{C}{4} \left( 1
+\sum_{k=1}^{N-1}\frac{1}{k+1}\right)
\geq C \frac{\log N}{4}.
\end{align*}
Therefore,
\[
\sum_{n=1}^N\frac{\Delta(nt,\theta)+\Delta(-nt,\theta)}{n}
\geq C
\left(\frac{\log N}{4}-1\right)
=
\frac{1+\cos(2\pi\theta)}{128}(\log N-4),\]
for all $N\geq 1$, as desired.
\end{proof}

\section{Distribution of the uniform maximum of the partial sums }

\subsection{Preliminary lemmas from \cite{ABL21}} 
To prove Theorem~\ref{THM Ncal_m(V)} and Theorem~\ref{thm structure simp}, we follow the approach of Autissier, Bonolis, and Lamzouri in \cite{ABL21}. We will make use of several of their lemmas; these depend only on Assumptions~2 and~3 of \cite{ABL21}, both of which are implied by Assumption~(\hyperlink{Ass Law}{Law}).

\begin{lem}\cite[Lemma 6.1]{ABL21}\label{lemme 6.3}
Let $m$ be a large enough integer and $1\leq y<z \leq m/2$ be real numbers. Let $\{\varphi_a\}_{ a \in\Omega_m} $ be a family of $m-$periodic complex-valued functions satisfying Assumption (\hyperlink{Ass Law}{Law}). Let $\{c(h)\}_{ h\in\Z^*} $ be a sequence of complex numbers such that $|c(h)| \leq c_0/|h|$
for $|h| \geq 1$, where $c_0$ is a positive constant.
Then, for all positive integers $k \leq \log m/(5 \log \log m),$ we have:
\[
\frac{1}{|\Omega_m|}
\sum_{a\in \Omega_m}
\left|
\sum_{y \leq|h|<z}
c(h)\widehat{\varphi_a}(h)
\right|^{2k }
\ll
\left(
\frac{16(c_0 \Mp)^2 k}{y}
\right)^k
+\frac{(4C_1 c_0\log m)^{2k}}{m^{1/2}},
\]
where the constant implied by the $\ll$ notation is absolute.
\end{lem}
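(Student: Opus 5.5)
We prove the moment bound by expanding the $2k$-th power and comparing with the probabilistic model through Assumption (\hyperlink{Ass Law}{Law}). Since $\widehat{\varphi_a}(h)$ is real, we write
\[
\Big|\sum_{y\leq|h|<z} c(h)\widehat{\varphi_a}(h)\Big|^{2k}
=\sum_{h_1,\dots,h_{2k}} c(h_1)\cdots c(h_k)\overline{c(h_{k+1})\cdots c(h_{2k})}\,\widehat{\varphi_a}(h_1)\cdots\widehat{\varphi_a}(h_{2k}),
\]
with every $h_j$ satisfying $y\leq |h_j|<z\leq m/2$. Each $h_j$ can therefore be viewed as an element of $(-m/2,m/2]$. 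We will require $2k\leq \log m/\log\log m$, which holds because $k\leq \log m/(5\log\log m)$.

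Averaging over $a\in\Omega_m$ and applying \eqref{eq moment Ass Law} to each $2k$-tuple, the average splits into two parts:
\[
\E\Big[\Big|\sum_{y\leq|h|<z}c(h)\X(h)\Big|^{2k}\Big]
+O\Big(\frac{C_1^{2k}}{m^{1/2}}\Big(\sum_{y\leq|h|<z}|c(h)|\Big)^{2k}\Big).
\]
For the error term, $|c(h)|\leq c_0/|h|$ gives $\sum_{y\leq |h|<z}|c(h)|\leq 2c_0(1+\log m)\leq 4c_0\log m$. The error is then at most $(4C_1c_0\log m)^{2k}/m^{1/2}$, which is exactly the second term of the lemma.

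For the main term, the $\X(h)$ are independent, symmetric and bounded by $\Mp$. Splitting into real and imaginary parts costs a factor $2^{2k}$ at most, so it suffices to bound the $2k$-th moment of the real sums $\sum b(h)\X(h)$ with $|b(h)|\leq c_0/|h|$. For these I would use one of two routes.

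The first route is the subgaussian bound. Since $\X$ is symmetric with $|\X|\leq\Mp$, we have $\E[e^{u\X}]\leq e^{u^2\Mp^2/2}$. The sum is then subgaussian with variance proxy
\[
\sigma^2=\Mp^2\sum_{|h|\geq y}\frac{c_0^2}{h^2}\leq \frac{4c_0^2\Mp^2}{y}.
\]
The standard moment bound for subgaussian variables, $\E|Y|^{2k}\leq 2\,k!\,(2\sigma^2)^k\leq 2(2\sigma^2k)^k$, gives a bound of the form $(C(c_0\Mp)^2k/y)^k$.

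The second route is direct: expand the moment and keep only the terms in which every index appears with even multiplicity. This counts set partitions into pairs weighted by $\prod c(h)^2$, giving at most $\frac{(2k)!}{2^kk!}\sigma^{2k}\leq (2k\sigma^2)^k$.

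The only real work is the constant bookkeeping needed to reach exactly $16(c_0\Mp)^2k/y$. This comes from $\sum_{|h|\geq y}h^{-2}\leq 4/y$ for $y\geq1$, the factor $2^{2k}$ (or $2^{2k-1}$) from the real/imaginary split, and the Gaussian moment constant. Any losses are absorbed into the absolute implied constant, so I do not expect a genuine obstacle.
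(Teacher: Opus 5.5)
Your argument is the standard one behind this lemma, which the paper quotes from \cite{ABL21} without proof, and it uses the same ingredients the paper records. The termwise comparison with the model is Lemma \ref{Pro 2.2} with $k=l$, and your error bound $(4C_1c_0\log m)^{2k}/m^{1/2}$ is right. The model moment is the first assertion of Lemma \ref{Lemme 3.1} with $2k$ in place of $k$, which gives $\big(8(c_0\Mp)^2\cdot 2k/y\big)^{k}=\big(16(c_0\Mp)^2k/y\big)^k$ directly, so you could simply quote it.

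The one step that fails as written is your last paragraph. Factors like $2^k$ depend on $k$, so they cannot be absorbed into an absolute implied constant. The constant $16$ inside the $k$-th power therefore has to come out exactly, and with the bookkeeping you list it does not. A $2^{2k}$ cost for the real/imaginary split, combined with your pairing bound $(2k\sigma^2)^k$ and $\sigma^2\le 4(c_0\Mp)^2/y$, gives $(32(c_0\Mp)^2k/y)^k$.

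The repair is easy. Since $|Y|^2=(\Re Y)^2+(\Im Y)^2$, convexity gives $|Y|^{2k}\le 2^{k-1}\big((\Re Y)^{2k}+(\Im Y)^{2k}\big)$, so the split costs only $2^k$ in total. Your pairing route then gives exactly $(16(c_0\Mp)^2k/y)^k$, and the subgaussian route gives the same bound with implied constant $2$.

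Alternatively, skip the split. In $\E[Y^k\overline{Y}^k]$ only tuples in which every index occurs an even number of times survive, and each such term is at most $\Mp^{2k}\prod_i|c(h_i)|$. Overcounting by perfect matchings of the $2k$ positions gives $\E|Y|^{2k}\le(2k-1)!!\,\Mp^{2k}\big(\sum|c(h)|^2\big)^k\le\big(4(c_0\Mp)^2k/y\big)^k$, using $(2k-1)!!\le k^k$ (AM--GM).
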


The following lemma is a generalization of \cite[Proposition 2.2]{Lam20}. In \cite{ABL21}, it is stated for \(y\geq1\) (see the proof of \cite[Lemma 6.1]{ABL21}). Under Assumption~(\hyperlink{Ass Law}{Law}), this restriction can be removed, allowing us to consider \(y \geq 0\), and we obtain the following form.
\begin{lem}\label{Pro 2.2} Let $m$ be a large enough integer. Let $\{\varphi_a\}_{ a \in\Omega_m} $ be a family of $m-$periodic complex-valued functions satisfying Assumption (\hyperlink{Ass Law}{Law}).
Let $\{c(h)\}_{h\in\Z}$ be a sequence of complex numbers.
Let $0 \leq y < z \leq m/2$ be real numbers and let $k, l$ be non-negative integers such that $1\leq k+l \leq \log m/\log\log m$. 
Then, we have
\[\frac{1}{|\Omega_m|}
\sum_{a\in \Omega_m}
\left(
\sum_{y \leq|h|<z}
c(h)\widehat{\varphi_a}(h)
\right)^k 
\left(
\sum_{y \leq|h|<z}
\overline{c(h)}\widehat{\varphi_a}(h)\right)^l
\]\[
= \E
\left[
\left(
\sum_{y \leq|h|<z}
c(h)\X(h)
\right)^k
\left(
\sum_{y \leq|h|<z}
\overline{c(h)}\X(h)
\right)^l
\right]
+ O
\left(
m^{-1/2} \left(C_1 \sum_{y \leq |h|<z} |c(h)|
\right)^{k+l} \right),\]
where $(\X(h))_{h\in\Z}$ denotes independent and identically distributed random variables with the same law as $\X$ in Assumption (\hyperlink{Ass Law}{Law}) and the constant implied by the $O$ notation is absolute.
\end{lem}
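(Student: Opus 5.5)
The plan is to expand both sides multinomially and apply the moment hypothesis \eqref{eq moment Ass Law} of Assumption (\hyperlink{Ass Law}{Law}) term by term. The key observation is that the allowed range of $h$ now includes $h=0$, and the moment estimate \eqref{eq moment Ass Law} explicitly covers the index $h=0$. That is exactly why the restriction $y\geq 1$ of \cite{ABL21} can be dropped.

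First, since $z\leq m/2$, every $h$ with $y\leq |h|<z$ lies in $(-m/2,m/2]$, and distinct such $h$ give distinct residues modulo $m$. Set $H:=\{h\in\Z : y\leq |h|<z\}$ and write
\[
\Big(\sum_{h\in H} c(h)\widehat{\varphi_a}(h)\Big)^k\Big(\sum_{h\in H}\overline{c(h)}\widehat{\varphi_a}(h)\Big)^l
=\sum_{h_1,\dots,h_{k+l}\in H} c(h_1)\cdots c(h_k)\,\overline{c(h_{k+1})}\cdots\overline{c(h_{k+l})}\;\widehat{\varphi_a}(h_1)\cdots\widehat{\varphi_a}(h_{k+l}).
\]
No conjugation is needed on the $\widehat{\varphi_a}(h)$ factors, because these values are real by Assumption (\hyperlink{Ass Law}{Law}).

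Next, average over $a\in\Omega_m$ and exchange the finite sums. For each tuple $(h_1,\dots,h_{k+l})\in H^{k+l}\subset(-m/2,m/2]^{k+l}$ with $k+l\leq \log m/\log\log m$, \eqref{eq moment Ass Law} gives
\[
\frac{1}{|\Omega_m|}\sum_{a\in\Omega_m}\widehat{\varphi_a}(h_1)\cdots\widehat{\varphi_a}(h_{k+l}) = \E[\X(h_1)\cdots\X(h_{k+l})]+O\big(C_1^{k+l}m^{-1/2}\big).
\]
The implied constant here is absolute, since the $O$ in \eqref{eq moment Ass Law} is uniform in $k$ and in the tuple.

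Substituting this back, the main terms reassemble by the same multinomial expansion into
\[
\E\Big[\Big(\sum_{h\in H} c(h)\X(h)\Big)^k\Big(\sum_{h\in H}\overline{c(h)}\X(h)\Big)^l\Big].
\]
The error terms are bounded in absolute value by
\[
C_1^{k+l}m^{-1/2}\sum_{h_1,\dots,h_{k+l}\in H}|c(h_1)|\cdots|c(h_{k+l})| = m^{-1/2}\Big(C_1\sum_{h\in H}|c(h)|\Big)^{k+l},
\]
which is the claimed error.

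There is no genuine obstacle. The only point needing care is that the case $h=0$ (when $y=0$) is legitimate, and it is, precisely because Assumption (\hyperlink{Ass Law}{Law}) includes the index $h=0$ in its moment condition. We also need $H$ to consist of representatives in $(-m/2,m/2]$ so that the tuples fall within the hypothesis; this is guaranteed by $z\leq m/2$.
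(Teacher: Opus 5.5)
Your proposal is correct and follows essentially the same route as the paper. The paper gives no separate proof: it defers to the multinomial-expansion argument of \cite[Proposition 2.2]{Lam20} and the proof of \cite[Lemma 6.1]{ABL21}, noting only that the restriction $y\geq 1$ can be dropped because Assumption (Law) includes the index $h=0$ in \eqref{eq moment Ass Law}. Your expansion, term-by-term application of the moment estimate to tuples in $(-m/2,m/2]^{k+l}$, and summation of the errors into $m^{-1/2}\bigl(C_1\sum_{y\leq|h|<z}|c(h)|\bigr)^{k+l}$ is exactly that argument.
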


\begin{lem}\cite[Lemma 3.1]{Lam20}, \cite[Lemma 5.1]{ABL21} \label{Lemme 3.1}
    Let $\{c(h)\}_{h\in\Z^*}$ be a sequence of complex numbers such that $|c(h)| \leq c_0/|h|$
for $|h| \geq 1$, where $c_0$ is a positive constant. Let 
$(\X(h))_{h\in\Z^*}$ denote independent and identically distributed random variables with the same law as $\X$ in Assumption (\hyperlink{Ass Law}{Law}). 
Let $1 \leq y < z$ be real numbers. Then, for all
positive integers $k$, we have
\[\E \left[ \left|
\sum_{
y \leq|h|<z}
c(h)\X(h)
\right|^k \right]
\leq   \left(\frac{8(c_0 \Mp )^2 k}{y}\right)^{k/2} .\]
Moreover, if $k > y$ then
\[\E
\left[
\left|
\sum_{
y \leq |h|<z}
c(h)\X(h)
\right|^k\right]
\leq ( 10 \Mp c_0 \log k)^k. \]
\end{lem}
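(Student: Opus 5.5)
The plan is to prove the first (sub-Gaussian) bound by a Laplace transform argument, and to derive the second bound from the first by splitting the range of $h$ at $|h|=k$.

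\textbf{First bound.} Write $Y:=\sum_{y\leq |h|<z}c(h)\X(h)=Y_R+iY_I$, where $Y_R=\sum \Re(c(h))\X(h)$ and $Y_I=\sum \Im(c(h))\X(h)$. Since $\X$ is symmetric and supported on $[-\Mp,\Mp]$, comparing Taylor coefficients gives $\E[e^{u\X}]=\E[\cosh(u\X)]\leq e^{u^2\Mp^2/2}$ for every real $u$. By independence, $Y_R$ and $Y_I$ are then sub-Gaussian with variance proxy
\[
\sigma^2=\Mp^2\sum_{|h|\geq y}\frac{c_0^2}{h^2}\leq 2\Mp^2c_0^2\Big(\frac{1}{y^2}+\frac{1}{y}\Big)\leq \frac{4\Mp^2c_0^2}{y},
\]
where the last step uses $y\geq 1$. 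The standard tail estimate $\P(|Y_R|>v)\leq 2e^{-v^2/(2\sigma^2)}$ then yields
\[
\E|Y_R|^k\leq 2\,\Gamma(k/2+1)(2\sigma^2)^{k/2}\leq 2\,(k\sigma^2)^{k/2},
\]
and the same bound holds for $Y_I$. It remains to combine the two parts. For $k\geq 2$, convexity gives $|Y|^k\leq 2^{k/2-1}(|Y_R|^k+|Y_I|^k)$, so
\[
\E|Y|^k\leq 2\,(2k\sigma^2)^{k/2}\leq 2\Big(\frac{8\Mp^2c_0^2k}{y}\Big)^{k/2}.
\]
The leftover factor $2$ must be absorbed. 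The fix is either a slightly sharper variance count, using $\sum_{|h|\geq y}h^{-2}\leq 2/y+2/y^2$ more carefully, or an exact even-moment computation: expand $\E|Y|^{2j}$, note that only even multiplicities survive by symmetry, and compare with Gaussian moments, then interpolate by Hölder for odd $k$. The case $k=1$ follows from Cauchy--Schwarz and $\E|Y|^2\leq \Mp^2\sum|c(h)|^2\leq 4\Mp^2c_0^2/y$. Tracking these numerical constants is the only delicate point of the first bound; the structure of the argument is routine.

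\textbf{Second bound ($k>y$).} Split $Y=Y_1+Y_2$, where $Y_1$ collects the terms with $y\leq|h|<k$ and $Y_2$ those with $k\leq|h|<z$ (one of them may be empty). The first part is bounded trivially:
\[
|Y_1|\leq \Mp c_0\sum_{1\leq |h|<k}\frac{1}{|h|}\leq 2\Mp c_0(1+\log k).
\]
For $Y_2$, apply the first bound with $y$ replaced by $k$, which gives $\|Y_2\|_k\leq \sqrt{8}\,\Mp c_0$. Minkowski's inequality in $L^k$ then gives
\[
\|Y\|_k\leq \Mp c_0\big(2+2\log k+\sqrt 8\big).
\]
Since $k>y\geq 1$, we have $k\geq 2$ and hence $\log k\geq \log 2$. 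One checks that $2+2\log k+2.83\leq 10\log k$ holds for all $k\geq 2$, since it is equivalent to $\log k\geq 4.83/8\approx 0.60$, which is true because $\log 2\approx 0.69$. This yields $\E|Y|^k\leq(10\Mp c_0\log k)^k$.

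The main obstacle is therefore purely numerical: obtaining the exact constant $8$ in the first bound. The second bound has ample slack once the first is established.
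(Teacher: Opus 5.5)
Your architecture is right, and your second bound is exactly the argument the paper uses for its variant Lemma~\ref{lem:moments h=0}: a trivial bound for $|h|<k$, the first bound with $y=k$ for $|h|\geq k$, and Minkowski in $L^k$. The paper does not prove the first bound; it only cites \cite{Lam20,ABL21}.

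The gap is that you never establish the first inequality with the stated constant. Your chain ends at $2\big(8(c_0\Mp)^2k/y\big)^{k/2}$, and the first repair you suggest cannot close this. At $k=2$ your chain gives $\E|Y|^2\leq 8\sigma^2$ with $\sigma^2=\Mp^2c_0^2\sum_{|h|\geq y}h^{-2}$. Reaching $16(c_0\Mp)^2/y$ would then require $\sum_{|h|\geq y}h^{-2}\leq 2/y$. That is false for every integer $y$, since $\sum_{h\geq n}h^{-2}>\int_n^\infty x^{-2}\,dx=1/n$. The loss is not in the variance count. It comes from bounding \emph{both} proxies $\sigma_R^2:=\Mp^2\sum(\Re c(h))^2$ and $\sigma_I^2:=\Mp^2\sum(\Im c(h))^2$ by the full $\sigma^2$. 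The leftover factor also matters downstream: with it, your second bound at $k=2$ would need $2+2\log 2+4\leq 10\log 2$, which is false.

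The fix is short: keep the two proxies separate. For $k\geq 2$, use $\Gamma(x+1)\leq x^x$ for $x\geq 1$ and the superadditivity of $u\mapsto u^{k/2}$ to get
\[
\E|Y|^k\leq 2^{k/2-1}\cdot 2\,\Gamma(k/2+1)\,2^{k/2}\big(\sigma_R^k+\sigma_I^k\big)\leq \big(2k(\sigma_R^2+\sigma_I^2)\big)^{k/2}.
\]
Since $\sigma_R^2+\sigma_I^2=\Mp^2\sum|c(h)|^2\leq 4(c_0\Mp)^2/y$, this gives exactly $(8(c_0\Mp)^2k/y)^{k/2}$.

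Alternatively, your second suggestion works with room to spare. Bounding the expansion of $\E|Y|^{2j}$ term by term, with all surviving expectations being nonnegative products of even moments, gives
\[
\E|Y|^{2j}\leq \E\Big[\Big(\sum|c(h)|\X(h)\Big)^{2j}\Big]\leq (2j-1)!!\,\Big(\Mp^2\sum|c(h)|^2\Big)^j\leq \big(4(c_0\Mp)^2j/y\big)^j .
\]
Lyapunov's inequality then handles odd $k$. With either repair, together with your Cauchy--Schwarz step for $k=1$, the rest of your proof goes through as written.
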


As for Lemma~\ref{Pro 2.2}, we shall need the second assertion of Lemma~\ref{Lemme 3.1} for $y=0$, that is, for sums which include the index $h=0$. This makes sense under Assumption~(\hyperlink{Ass Law}{Law}), where the sequence $(\X(h))_{h\in\Z}$ is indexed by all of $\Z$, and the coefficient $c(0)$ is simply absorbed into the trivial bound for the low range, so that the constant is unchanged. We only state the case $c_0=1/2$, which is the one we shall use.

\begin{lem}\label{lem:moments h=0}
Let $\{c(h)\}_{h\in\Z}$ be a sequence of complex numbers such that
$|c(0)|\leq 2$ and $|c(h)|\leq \frac{1}{2|h|}$ for $|h|\geq 1$. Let
$(\X(h))_{h\in\Z}$ denote independent and identically distributed random
variables with the same law as $\X$ in Assumption (\hyperlink{Ass Law}{Law}).
Then, for every real number $z>1$ and every integer $k\geq 4$, we have
\[
  \E\left[\left|\sum_{|h|<z}c(h)\X(h)\right|^{k}\right]
  \leq \left(5\Mp\log k\right)^{k}.
\]
\end{lem}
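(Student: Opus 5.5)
Split the sum into the term $h=0$ and the sum over $1\le|h|<z$, and apply Minkowski's inequality in $L^k$:
\[
\Big\|\sum_{|h|<z}c(h)\X(h)\Big\|_k\le |c(0)|\,\|\X(0)\|_k+\Big\|\sum_{1\le|h|<z}c(h)\X(h)\Big\|_k .
\]
The first term is at most $2\Mp$, since $|\X|\le\Mp$. It therefore suffices to bound the second term by roughly $(5\log k-2)\Mp$.

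For the sum over $1\le |h|<z$, I would use Lemma~\ref{Lemme 3.1} with $c_0=1/2$, treating two cases.

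\emph{Case $z\le k$.} If $z>1$, split at $y=1$ only. The whole range $1\le|h|<z$ then has $z\le k$, so it is a short sum. I bound it trivially by
\[
\sum_{1\le|h|<z}\frac{\Mp}{2|h|}\le \Mp(1+\log z)\le \Mp(1+\log k).
\]

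\emph{Case $z>k$.} Split at $y=k/2$ (any $y<k$ works). For the range $1\le|h|<y$, the trivial bound gives at most $\Mp(1+\log k)$. For the range $y\le|h|<z$, the first assertion of Lemma~\ref{Lemme 3.1} gives an $L^k$ norm of at most
\[
\Big(\frac{8(\Mp/2)^2k}{y}\Big)^{1/2}=\Mp\sqrt{2k/y}=2\Mp .
\]
Alternatively, I can apply the second assertion of Lemma~\ref{Lemme 3.1} directly with $y=1<k$. That bounds the whole range $1\le |h|<z$ by $10\cdot\tfrac12\Mp\log k=5\Mp\log k$, but this leaves no room for the $h=0$ term. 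So I would use the explicit split instead.

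Collecting the terms, the total is at most $\Mp(2+1+\log k+2)=\Mp(5+\log k)$. This is $\le 5\Mp\log k$ provided $4\log k\ge5$, i.e.\ $k\ge 4$ (indeed $\log 4\approx1.386$ and $4\cdot1.386>5$). This is exactly why the hypothesis $k\ge4$ appears.

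The main obstacle is constant bookkeeping: confirming the harmonic-sum bound $\sum_{1\le|h|<z}1/(2|h|)\le 1+\log z$ (two signs, each with $\sum_{n<z}1/n\le1+\log z$, times $1/2$), and checking that the choice $y=k/2$ keeps the tail at $2\Mp$. If the constants turn out tighter than this estimate, I would instead take $y=k$, so the tail is $\sqrt2\,\Mp$ and the head is $\Mp(1+\log k)$; that leaves even more slack.
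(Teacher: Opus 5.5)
Your proposal is correct and is essentially the paper's argument: Minkowski in $L^k$, the trivial bound on the low range including $h=0$, and the first assertion of Lemma~\ref{Lemme 3.1} with $c_0=1/2$ on the tail. The only difference is that the paper splits at $\min(k,z)$ with $y=k$ and gets $\Mp(3+\sqrt2+\log k)$ (your fallback choice), whereas your split at $y=k/2$ gives $\Mp(5+\log k)$; both are at most $5\Mp\log k$ for $k\geq 4$.
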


\begin{proof} By Minkowski's inequality,
\[
  \E\left[\left|\sum_{|h|<z}c(h)\X(h)\right|^{k}\right]^{1/k}
  \leq \E\left[\left|\sum_{|h|<\min(k,z)}c(h)\X(h)\right|^{k}\right]^{1/k}
  + \E\left[\left|\sum_{k\leq|h|<z}c(h)\X(h)\right|^{k}\right]^{1/k},
\]
where the last term is empty if $z\leq k$. Since $|\X(h)|\leq \Mp$ almost surely, the
first term is bounded trivially by
\[
  \Mp \sum_{|h|<k}|c(h)|
  \leq \Mp\left(2+\sum_{1\leq h<k}\frac1h\right)\leq \Mp\left(3+\log k\right).
\]
The second term is bounded by the first assertion of Lemma~\ref{Lemme 3.1}, applied with
$c_0=1/2$ and $y=k\geq 1$, which gives $\big(8(c_0\Mp)^2k/k\big)^{1/2}=\sqrt{2}\,\Mp$.
Altogether,
\[
  \E\left[\left|\sum_{|h|<z}c(h)\X(h)\right|^{k}\right]
  \leq \left(\Mp\left(\log k+3+\sqrt2\right)\right)^k\leq \left(5\Mp\log k \right)^k,
\]
since $4\log k\geq 4\log 4>3+\sqrt2$ for $k\geq 4$.
\end{proof}

\subsection{Discretization and exceptional elements $a\in \Omega_m$}

In this section, we assume Assumptions~(\hyperlink{Ass Law}{Law}) and (\hyperlink{Ass Tightness}{Tightness}) to prove Theorem~\ref{THM Ncal_m(V)} and Theorem~\ref{thm structure simp}. To begin, by the discrete Plancherel formula, the partial sums $S_m(a,t)$ defined in \eqref{def S_m(a,t)} admit the representation \eqref{S_m(a,t) fourier transform}. For $a\in\Omega_m$ and $t\in[0,1]$, this gives
\begin{equation}\label{eq term pair h=m/2}
\B_m(a,t)= \sum_{h\in \mathbb{Z}/m\mathbb{Z}}
\gamma_m(h,t)\widehat{\varphi_a}(h)
= \sum_{|h|< m/2} 
\gamma_m(h,t)\widehat{\varphi_a}(h) + O\!\left( \frac{1}{m} \right),
\end{equation}
where the error term comes from the contribution of \(h=m/2\). More generally, for \(t,\theta \in [0,1]\), 
\begin{equation}\label{eq term pair h=m/2 avec theta}
\Re\!\left(e^{\pi i\theta}\B_m(a,t)\right)
= \sum_{h\in \mathbb{Z}/m\mathbb{Z}}
\gamma_m(h,t,\theta)\widehat{\varphi_a}(h)
= \sum_{|h|< m/2} 
\gamma_m(h,t,\theta)\widehat{\varphi_a}(h) + O\!\left( \frac{1}{m} \right),
\end{equation}
where \(\gamma_m(h,t,\theta):= \Re\!\left(e^{\pi i\theta}\gamma_m(h,t)\right)\).
Moreover, for all \(1\leq |h|< m/2\) and \(t\in[0,1]\), we will use the following bound
\begin{equation}\label{Bound gamma p}
|\gamma_m(h,t)| \leq \frac{1}{m|\sin(\pi h/m)|}\leq \frac{1}{2|h|}.
\end{equation}

\subsubsection{Definitions}
Let \(0<\tau<1\), and define the grid
\[
\Scal_\tau := \left\{ n\tau \; ; \; 1\leq n \leq \frac{1}{\tau} \right\}.
\]
Let $0<\delta< 1/4 $ and $\alpha \geq 1 $ denote the constants in Assumption (\hyperlink{Ass Tightness}{Tightness}).
Set $\beta := \frac{5}{2\delta}-1$. We define the following sets:
\begin{align*}
E^0_m 
&:=\left\{a\in\Omega_m \;:\;
\max_{|t-t'|\leq m^{-1/2+\delta}}\left|
\frac{1}{\sqrt{m}}\sum_{mt' < n\leq mt} \varphi_a(n)
\right| >
m^{-\frac{\delta}{2\alpha}}
\right\},\\
E^1_m &:= \left\{ a \in  \Omega_m: \max _{t\in \Scal_{m^{-1/2+\delta}}} \left|\sum_{(\log m)^{\beta} \leq |h|< m/2} \gamma_m(h,t)\widehat{\varphi_a}(h)\right|> \frac{1}{\log\log m}\right\},\\
E^2_m &:= \left\{ a \in  \Omega_m: \max _{t\in \Scal_{(\log m)^{-\beta-1}}} \left|\sum_{(\log m)^2 \leq |h|<(\log m)^{\beta}} \gamma_m(h,t)\widehat{\varphi_a}(h)\right|> \frac{1}{\log\log m}\right\}.
\end{align*}
Let $H_0 \leq (\log m)^2$ be a large positive integer to choose later in the proof of Proposition \ref{Pro final canonic unif T,Theta}. Let $J:= \left\lfloor \log\left(\frac{(\log m)^2}{H_0}\right) /\log 2\right\rfloor$. We define for all $j \in [\![0,J]\!]  $, $H_j := 2^j H_0 \leq (\log m)^2$ and 
\[E_m(H_j) := 
\left\{ a \in  \Omega_m: \max _{t\in \Scal_{H_j^{-4/3}}} \left|\sum_{H_j\leq |h|< H_{j+1}} \gamma_m(h,t)\widehat{\varphi_a}(h)\right|> \frac{1}{(\log H_j)^2}\right\},
\]
where $H_{J+1}:= (\log m)^2 \leq 2 H_J$
and for all $j \in [\![0,J]\!]$,
\begin{equation}\label{eq def Ecal_m(H_j)}
\Ecal_m(H_j) := E^0_m\cup E^1_m\cup E^2_m\cup
\bigcup_{j \leq  i\leq J}
E_m(H_i).
\end{equation}

Moreover, we should note that all implicit constants can depend on the parameters $\Mp, \alpha, C_1,\delta$,
$c_0 = \frac{1}{2}$ and $\eta= \frac{1}{2}$ where \(\eta\) denotes the exponent of \(m\) appearing in the error term in Assumption (\hyperlink{Ass Law}{Law}) and in Assumption $3$ in \cite{ABL21} (see Proposition \ref{Pro Assumptions implication}). Furthermore, the implicit constants will not depend on $t$ or $\theta,$ so all estimates will be uniform for $t\in[0,1]$ and $\theta\in[0,1]$.

\subsubsection{Size of the sets $E^0_m, E^1_m, E^2_m$ and $E_m(H_j)$, for $0\leq j\leq J$}
The following lemma shows that these sets are not too large.
\begin{lem}\label{lem card set E(H)} Let $m$ be a large enough integer. By assuming Assumptions (\hyperlink{Ass Law}{Law}) and (\hyperlink{Ass Tightness}{Tightness}), we have 
\begin{equation}\label{eq bound E^1,2,3}
\frac{|E^0_m \cup E^1_m\cup E^2_m|}{|\Omega_m|}\ll m^{-\delta/2}.
\end{equation}

\begin{equation}\label{eq bound E(H j)}
\frac{|E_m(H_j)|}{|\Omega_m|}\ll 
 \exp\left(-\frac{H_j^{1/2}}{8}\right), \quad\text{for  } j \in [\![ 0,J ]\!]  .
\end{equation}
\begin{equation}\label{eq bound Ecal(H_0)}
\frac{|\Ecal_m(H_j)|}{|\Omega_m|}\ll 
\exp\left(-\frac{\delta H_j^{1/2}}{2}\right), \quad\text{for  } j \in [\![ 0,J ]\!]  .
\end{equation}
\end{lem}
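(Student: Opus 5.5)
We bound each exceptional set by Markov's inequality applied to a high moment, combined with a union bound over the grid points. The moment estimates come from Assumption (\hyperlink{Ass Tightness}{Tightness}) for $E^0_m$ and from Lemma~\ref{lemme 6.3} for the other sets.

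\emph{The set $E^0_m$.} If $|t-t'|\le m^{-1/2+\delta}$, the sum over $mt'<n\le mt$ is a sum over an interval $I$ with $|I|\le m^{1/2+\delta}+1$. Enlarging $\delta$ slightly, or splitting $I$ into two pieces, we may apply Assumption (\hyperlink{Ass Tightness}{Tightness}). Markov's inequality with exponent $\alpha$ then gives
\[
\frac{|E^0_m|}{|\Omega_m|}\le m^{\delta/2}\cdot \frac{1}{|\Omega_m|}\sum_{a\in\Omega_m}\max_{|I|\le m^{1/2+\delta}}\Big|\frac{1}{\sqrt m}\sum_{n\in I}\varphi_a(n)\Big|^{\alpha}\ll m^{-\delta/2}.
\]

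\emph{The sets $E^1_m$ and $E^2_m$.} For $E^1_m$, fix $t$ in the grid $\Scal_{m^{-1/2+\delta}}$, which has at most $m^{1/2-\delta}$ points. Apply Lemma~\ref{lemme 6.3} with $c(h)=\gamma_m(h,t)$, $c_0=1/2$ by \eqref{Bound gamma p}, $y=(\log m)^\beta$, and $k=\lfloor \log m/(5\log\log m)\rfloor$. The $2k$-th moment is then $\ll (4\Mp^2k/(\log m)^\beta)^k+(2C_1\log m)^{2k}m^{-1/2}$. Multiply by $(\log\log m)^{2k}$ (Markov) and by the grid size. The first term is $\le (\log m)^{-(\beta-2)k}$ and $(\log m)^k=m^{1/(5\log\log m)\cdot\log\log m}=m^{1/5}$. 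Since $\beta=5/(2\delta)-1$, we get $(\beta-2)/5\ge 1/(2\delta)-3/5$, which is large because $\delta<1/4$. This beats $m^{1/2}$ with room to spare.

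The second term is the delicate one. It is $(2C_1\log m\log\log m)^{2k}m^{-1/2}$, and with $k\asymp \log m/\log\log m$ it equals $m^{2/5+o(1)}m^{-1/2}$, which against $m^{1/2-\delta}$ grid points is too large. I would therefore take $k=\lfloor \varepsilon\log m/\log\log m\rfloor$ with a small $\varepsilon$ depending on $\delta$. The error term becomes $m^{2\varepsilon+o(1)-1/2}$ and the first term becomes $m^{-\varepsilon(\beta-2)+o(1)}$. Choosing $\varepsilon$ with $\varepsilon(\beta-2)>1$ forces $\varepsilon\approx 2\delta/5$, so the error term is $m^{4\delta/5-1/2}$. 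Multiplied by $m^{1/2-\delta}$ this gives $m^{-\delta/5}$, which is still decaying.

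Matching the exponent $m^{-\delta/2}$ in \eqref{eq bound E^1,2,3} exactly is the step I expect to be the main obstacle. It requires balancing $k$ against $\beta$, and possibly using a finer interpolation between grid points. If needed, I would settle for the bound on the grid combined with $E^0_m$.

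$E^2_m$ is easier: the grid has only $(\log m)^{\beta+1}$ points, and $y=(\log m)^2$. Take $k=\lfloor\log m/(5\log\log m)\rfloor$. The main term is $\le (C\log\log m/\log m)^{k}(\log\log m)^{2k}$, which is $m^{-1/5+o(1)}$. The error term is $(\log m)^{O(k)}m^{-1/2}$, at most $m^{-1/2+2/5+o(1)}$. Both are $\ll m^{-\delta/2}$ since $\delta<1/4$.

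\emph{The sets $E_m(H_j)$.} The grid $\Scal_{H_j^{-4/3}}$ has $H_j^{4/3}$ points. Apply Lemma~\ref{lemme 6.3} with $y=H_j$ and threshold $(\log H_j)^{-2}$. Markov gives, per grid point,
\[
\Big(\frac{4\Mp^2k(\log H_j)^4}{H_j}\Big)^k+\frac{(2C_1\log m)^{2k}(\log H_j)^{4k}}{m^{1/2}}.
\]
Choose $k=\lfloor H_j^{1/2}\rfloor$. This is admissible since $H_j\le(\log m)^2$ gives $k\le\log m$, and if needed we cap $k$ at $\log m/(5\log\log m)$ and handle the top range separately. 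The first term is $(CH_j^{-1/2}(\log H_j)^4)^{k}\le e^{-k}$ for $H_0$ large, and the factor $H_j^{4/3}$ is absorbed, giving $\exp(-H_j^{1/2}/8)$. The second term is $(\log m)^{O(H_j^{1/2})}m^{-1/2}$. Because $H_j^{1/2}\le\log m$ this can be large, so $k$ must be capped at $c\log m/\log\log m$ when $H_j^{1/2}$ exceeds this. In that regime the first term is still $\le\exp(-c\log m/\log\log m)$, and this is $\ll \exp(-H_j^{1/2}/8)$ only when $H_j^{1/2}\lesssim \log m/\log\log m$. For larger $H_j$ I would instead use the first assertion of Lemma~\ref{Lemme 3.1} to control the main term via the mean, and accept the bound $m^{-c}\ll\exp(-H_j^{1/2}/8)$, which holds since $H_j^{1/2}\le\log m$ and, with the constants adjusted, $m^{-c}\le e^{-\log m/8}$.

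\emph{The bound for $\Ecal_m(H_j)$.} Finally, \eqref{eq bound Ecal(H_0)} follows by summing: $\sum_{i\ge j}\exp(-2^{i/2}H_j^{1/2}/8)\ll\exp(-H_j^{1/2}/8)$. Adding $m^{-\delta/2}$, which is $\le\exp(-\delta\log m/2)\le\exp(-\delta H_j^{1/2}/2)$, and using $\delta<1/4$, so that $1/8\ge\delta/2$, completes the proof.
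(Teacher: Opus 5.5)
Your strategy is the paper's: Assumption (Tightness) plus Markov for $E^0_m$; for the other sets, Markov with a $2k$-th moment, a union bound over the grid, and Lemma~\ref{lemme 6.3}; then summing the dyadic pieces. In this lemma, though, the substance is the choice of the moment exponent, and your choices do not give the stated bounds.

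\textbf{The set $E^1_m$.} The obstacle you flag is self-inflicted. You do not need $\varepsilon(\beta-2)>1$. The main term only has to beat the grid size $m^{1/2-\delta}$ by a factor $m^{-\delta/2}$. Also, bounding $4\Mp^2k(\log\log m)^2$ by $(\log m)^2$ throws away a factor $(\log m)^k$, since in fact $(Ck)^k(\log\log m)^{2k}=m^{\varepsilon+o(1)}$. The paper takes $r=\lfloor\delta\log m/(5\log\log m)\rfloor$. Since $\beta\delta/5=1/2-\delta/5$, both terms of Lemma~\ref{lemme 6.3} are then $m^{-1/2+2\delta/5+o(1)}$. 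Multiplying by $m^{1/2-\delta}$ gives $m^{-3\delta/5+o(1)}\ll m^{-\delta/2}$, and no finer interpolation is needed. Settling for $m^{-\delta/5}$ is not harmless: in the proof of Proposition~\ref{pro 7.2 adapted H_0} this bound is multiplied by $(3\Mp\log m)^{k}$ with $k$ up to $\delta\log m/(5\log\log m)$, that is, by $m^{\delta/5+o(1)}$.

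\textbf{The set $E^2_m$.} Your $k=\lfloor\log m/(5\log\log m)\rfloor$ makes the error term $(2C_1\log m\,\log\log m)^{2k}m^{-1/2}=m^{-1/10+o(1)}$. This is $\ll m^{-\delta/2}$ only for $\delta<1/5$, not for all $\delta<1/4$ as you claim. The paper takes $k=\lfloor\log m/(8\log\log m)\rfloor$, giving $m^{-1/8+o(1)}$ and $m^{-1/4+o(1)}$. Both are $\ll m^{-\delta/2}$ because $\delta/2<1/8$.

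\textbf{The sets $E_m(H_j)$.} The choice $k=\lfloor H_j^{1/2}\rfloor$ is not admissible in Lemma~\ref{lemme 6.3} once $H_j^{1/2}>\log m/(5\log\log m)$, and your treatment of that range is not right. With the cap $k=c\log m/\log\log m$ and $H_j\ge k^2$, the main term (Markov factor included) is $(4\Mp^2(\log H_j)^4/k)^k=m^{-c+o(1)}$, not merely $\exp(-c\log m/\log\log m)$. Lemma~\ref{Lemme 3.1} brings nothing new, since its first assertion is exactly the main term of Lemma~\ref{lemme 6.3}. The argument closes only if $c$ lies in the window $1/8<c<3/16$. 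Then both the main term and the error term $m^{2c-1/2+o(1)}$ are $\le m^{-1/8}\le e^{-H_j^{1/2}/8}$. This is the check you leave as ``constants adjusted''.

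The paper avoids the case split with the single choice $l_j=\lfloor H_j^{1/2}/(3\log H_j)\rfloor$, which is automatically $\le\log m/(6\log\log m)$. It gives main term $H_j^{4/3}\bigl(A(\log H_j)^3/H_j^{1/2}\bigr)^{l_j}=\exp(-(1/6+o(1))H_j^{1/2})$ and error term $m^{-1/6+o(1)}\le e^{-H_j^{1/2}/8}$.

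Your treatment of $E^0_m$ and the final summation giving the bound for $\Ecal_m(H_j)$ are fine; the exponent there should read $2^{(i-j)/2}$.
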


\begin{proof} 

By Assumption (\hyperlink{Ass Tightness}{Tightness}), there exists $\alpha\geq 1$ such that:
\[
\frac{1}{|\Omega_m|}\sum_{a\in\Omega_m}
\max_{|I|\leq m^{1/2+\delta}}\left|
\frac{1}{\sqrt{m}}\sum_{n\in I} \varphi_a(n)
\right|^{\alpha } \ll
m^{-\delta}.
\]
Therefore
\begin{equation}\label{Fm/Omega <<}
\frac{|E^0_m|}{|\Omega_m|}
\leq  \frac{1}{|\Omega_m|} \sum_{a\in\Omega_m}\left( m^{\frac{\delta}{2\alpha}}
\max_{|t-t'|\leq m^{-1/2+\delta}}\left|
\frac{1}{\sqrt{m}}\sum_{mt' < n\leq mt} \varphi_a(n)
\right| \right)^\alpha
\ll m^{-\delta/2}.
\end{equation}

Now, we will only use Assumption (\hyperlink{Ass Law}{Law}) to bound the size of the remaining sets. Let us take $ r = \left\lfloor\frac{\delta \log m}{5\log\log m}\right\rfloor.$
\begin{align*}
| E^1_m| &= 
\left|\left\{ a \in  \Omega_m: \max _{t\in \Scal_{m^{-1/2+\delta}}} \left|\sum_{(\log m)^{\beta} \leq |h|< m/2} \gamma_m(h,t)\widehat{\varphi_a}(h)\right|> \frac{1}{\log\log m}\right\}\right|
\\
&\leq (\log\log m)^{2r}\sum_{a\in \Omega_m}
\max _{t\in \Scal_{m^{-1/2+\delta}}}\left|
\sum_{(\log m)^{\beta}\leq |h|< m/2}
\gamma_m(h,t)\widehat{\varphi_a}(h)
\right|^{2r} \\
&\ll_\varepsilon m^\varepsilon \sum_{t\in \Scal_{m^{-1/2+\delta}}}\sum_{a\in \Omega_m}
\left|
\sum_
{(\log m)^{\beta}\leq |h|< m/2}
\gamma_m(h,t)\widehat{\varphi_a}(h)
\right|^{2r}.
\end{align*}
By \eqref{Bound gamma p}, we use Lemma \ref{lemme 6.3} with $c_0=1/2$.
There exist two constants $A$ and $B$ such that for all $t\in [0,1]$
\begin{align*}
\frac{1}{|\Omega_m|}
\sum_{a\in \Omega_m}
\left|
\sum_
{(\log m)^{\beta}\leq |h|< m/2}
\gamma_m(h,t)\widehat{\varphi_a}(h)
\right|^{2r}
&\ll
\left(
\frac{A r}{(\log m)^{\beta}}
\right)^r
+\frac{(B\log m)^{2r}}{m^{1/2}}\\
&\ll_\varepsilon m^\varepsilon m^{-\frac{1}{2}+2\delta/5}.
\end{align*}
Since $|\Scal_{m^{-1/2+\delta}}|\leq m^{1/2-\delta} $, we obtain:
\[
\frac{|E^1_m|}{|\Omega_m|}
\ll_\varepsilon 
m^{2\varepsilon}
m^{1/2 -\delta}
m^{-\frac{1}{2}+2\delta/5}
\ll m^{-\delta/2}.
\]
We now bound $E_m^2$, choosing $k = \left\lfloor\frac{\log m }{8\log\log m} \right \rfloor$ for $E^2_m$, we have similarly
\begin{align*}
\frac{| E^2_m|}{|\Omega_m|}
&\leq \frac{(\log\log m)^{2k}}{|\Omega_m|}
\sum_{a\in \Omega_m}
\max _{t\in \Scal_{(\log m)^{-\beta-1}}}\left|
\sum_{(\log m)^{2}\leq |h|< (\log m)^{\beta}}
\gamma_m(h,t)\widehat{\varphi_a}(h)
\right|^{2k} \\
&\ll_\varepsilon m^\varepsilon \sum_{t\in \Scal_{(\log m)^{-\beta-1}}} \frac{1}{|\Omega_m|}\sum_{a\in \Omega_m}
\left|
\sum_
{(\log m)^{2}\leq |h|< (\log m)^{\beta}}
\gamma_m(h,t)\widehat{\varphi_a}(h)
\right|^{2k}
\end{align*}
By Lemma \ref{lemme 6.3} and since \( |\Scal_{(\log m)^{-\beta-1}}| \leq (\log m)^{\beta+1} \) and \(\delta < 1/4\), this yields the desired bound.
\begin{align*}
\frac{| E^2_m|}{|\Omega_m|}
&\ll_\varepsilon m^\varepsilon |\Scal_{(\log m)^{-\beta-1}}|
\left[ 
\left(
\frac{A k}{(\log m)^{2}}
\right)^k
+\frac{(B\log m)^{2k}}{m^{1/2}}\right] \\
&
\ll_\varepsilon m^{2\varepsilon} (\log m)^{\beta+1}
\left[m^{-1/8}+m^{-\frac{1}{2}+2/8}
\right] \\
&\ll m^{-\delta/2}.
\end{align*}
Combining the above results, we obtain \eqref{eq bound E^1,2,3}.

Now, we repeat the same argument to bound $E_m(H_j)$ for $ j \in [\![ 0,J ]\!]  $ with $H_j\leq (\log m)^2.$ Let 
$ l_j = \left\lfloor\frac{H_j^{1/2}}{3\log H_j}
\right\rfloor\leq \frac{\log m}{6\log\log m}.$ By Lemma \ref{lemme 6.3}, we have
\begin{align*}
\frac{| E_m(H_j)|}{|\Omega_m|} &= 
\frac{1}{|\Omega_m|}\left| \left\{
a \in  \Omega_m: 
\max _{t\in \Scal_{H_j^{-4/3}}} 
\left|\sum_{H_j \leq |h|< H_{j+1}} \gamma_m(h,t)\widehat{\varphi_a}(h)\right|
> \frac{1}{(\log H_j)^2}
\right\}\right|
\\
&\leq 
(\log H_j)^{4l_j}
\sum_{t\in \Scal_{H_j^{-4/3}}}
\frac{1}{|\Omega_m|}\sum_{a\in \Omega_m}
\left|
\sum_{H_j \leq |h|< H_{j+1}}
\gamma_m(h,t)\widehat{\varphi_a}(h)
\right|^{2l_j} \\
&\leq  
H_j^{4/3} (\log H_j)^{4l_j}\left[
\left(
\frac{A l_j}{H_j}
\right)^{l_j}
+\frac{(B\log m)^{2l_j}}{m^{1/2}}\right]\\
&\leq H_j^{4/3}
\left(
\frac{A(\log H_j)^3}{4H_j^{1/2}}
\right)^{l_j}
+(\log m)^{8/3}\frac{(2B\log(2\log m)\log m)^{2l_j}}{m^{1/2}}.\\
\end{align*}
Since $ H_j^{1/2}\leq \log m $, we obtain
\[
\frac{| E_m(H_j)|}{|\Omega_m|}
\ll  \exp\left(-\frac{H_j^{1/2}}{8}\right)+
m^{-1/8}\ll 
\exp\left(-\frac{H_j^{1/2}}{8}\right).
\]
This proves \eqref{eq bound E(H j)}. Combining with \eqref{eq bound E^1,2,3}, we obtain \eqref{eq bound Ecal(H_0)}.
\[
\frac{|\Ecal_m(H_j)|}{|\Omega_m|} 
\ll m^{-\delta /2}+
\sum_{i=j}^{J}\exp\left(-\frac{(2^{i-j} H_j)^{1/2}}{8}\right)\]\[
\ll
m^{-\delta /2}
+\exp\left(-\frac{H_j^{1/2}}{8}\right)
\ll
\exp\left(-\frac{\delta H_j^{1/2}}{2}\right).
\]

\end{proof}

\begin{pro}\label{Pro a not in Ep}
Let $H_0$ be a large enough integer such that $ H_0 \leq (\log m)^2, $ and let $a\not\in \Ecal_m(H_j)$ for $0\leq  j \leq J$. Then
\begin{equation} \label{1*}
    \sup_{t\in [0,1]} \left|\B_m(a,t)\right|\leq (\Mp+1)\log H_j .
\end{equation}
Moreover, if $a\not\in \Ecal_m(H_0)$ then we have
\begin{equation} \label{2*}
\underset{|t-t'|\leq H_0^{-4/3}}{\sup_{t,t'\in [0,1]}} \left|\B_m(a,t)-\B_m(a,t')\right|
\ll \frac{1}{\log H_0}.
\end{equation}
\begin{equation} \label{3*}
\underset{|\theta-\theta'|\leq H_0^{-2/3}}{\sup_{t,\theta,\theta'\in [0,1]}} \left|\Re\left(e^{\pi i\theta}\B_m(a,t)\right)-\Re\left(e^{\pi i\theta'}\B_m(a,t)\right)\right|
\ll \frac{1}{\log H_0}.
\end{equation}
\end{pro}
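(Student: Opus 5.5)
The approach is a discretize-and-split argument: pick a nearby grid point, cut the Fourier expansion \eqref{eq term pair h=m/2} into dyadic and large-$h$ blocks, and bound each block using the fact that $a$ lies outside the relevant exceptional set. For \eqref{1*}, fix $t\in[0,1]$. Since $a\notin E^0_m$, replacing $t$ by the nearest point $t_1\in\Scal_{m^{-1/2+\delta}}$ changes $\B_m(a,t)$ by at most $m^{-\delta/(2\alpha)}$. We then expand $\B_m(a,t_1)$ and split the frequencies into four blocks:
\[
|h|<H_j,\qquad H_j\le|h|<(\log m)^2,\qquad (\log m)^2\le|h|<(\log m)^\beta,\qquad (\log m)^\beta\le|h|<m/2.
\]

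We treat the blocks as follows.
\begin{enumerate}
\item The top block is at most $1/\log\log m$, since $a\notin E^1_m$.
\item The low block is bounded trivially by \eqref{Bound gamma p} together with $|\widehat{\varphi_a}(h)|\le\Mp$. This gives $\Mp\bigl(|\gamma_m(0,t_1)|+\sum_{1\le|h|<H_j}\frac1{2|h|}\bigr)\le \Mp(\log H_j+O(1))$.
\item For the block $(\log m)^2\le|h|<(\log m)^\beta$, we move $t_1$ to the nearest point $t_2\in\Scal_{(\log m)^{-\beta-1}}$. The change of each coefficient is controlled by $|\gamma_m(h,t_1)-\gamma_m(h,t_2)|\le |t_1-t_2|+1/m$, so the block moves by at most $(\log m)^{\beta}\cdot(\log m)^{-\beta-1}\Mp\ll 1/\log m$. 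At $t_2$ the block is at most $1/\log\log m$, since $a\notin E^2_m$.
\item Each dyadic block $H_i\le|h|<H_{i+1}$ with $i\ge j$ is handled the same way: move to the grid $\Scal_{H_i^{-4/3}}$, at cost $\ll H_{i}\cdot H_i^{-4/3}=H_i^{-1/3}$, then use $a\notin E_m(H_i)$ to get $(\log H_i)^{-2}$.
\end{enumerate}
Summing over $i\ge j$, both $\sum_i (\log(2^iH_0))^{-2}$ and $\sum_i H_i^{-1/3}$ are $O(1)$; in fact they are $o(\log H_j)$ once $H_0$ is large. The total is therefore $\Mp\log H_j+O(1)\le(\Mp+1)\log H_j$ for $H_0$ large.

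For \eqref{2*}, take $|t-t'|\le H_0^{-4/3}$. We compare $\B_m(a,t)$ and $\B_m(a,t')$ block by block, using the same four blocks with $j=0$.
\begin{enumerate}
\item The frequencies $|h|\ge (\log m)^2$ contribute at most $2(1/\log\log m+m^{-\delta/(2\alpha)}+O(1/\log m))$, reusing the bounds above at each of $t$ and $t'$.
\item Each dyadic block $i$ contributes at most $2\bigl((\log H_i)^{-2}+O(H_i^{-1/3})\bigr)$, again bounded separately at $t$ and $t'$. Summing over $i\ge0$ gives $\sum_i (\log H_0+i\log2)^{-2}\ll 1/\log H_0$.
\item The low block $|h|<H_0$ is handled by the Lipschitz bound $|\gamma_m(h,t)-\gamma_m(h,t')|\le |t-t'|+2/m$. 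Its contribution is $\ll H_0\cdot H_0^{-4/3}=H_0^{-1/3}$.
\end{enumerate}
Since $\log\log m\gg\log H_0$ (as $H_0\le(\log m)^2$), everything is $\ll1/\log H_0$.

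For \eqref{3*}, we write
\[
\bigl|\Re(e^{\pi i\theta}\B)-\Re(e^{\pi i\theta'}\B)\bigr|\le \pi|\theta-\theta'|\,|\B_m(a,t)|.
\]
By \eqref{1*} with $j=0$, this is at most $\pi H_0^{-2/3}(\Mp+1)\log H_0\ll 1/\log H_0$.

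I expect the main obstacle to be bookkeeping in the dyadic sums. One point is ensuring that the grid-shift errors $H_i^{-1/3}$ and the thresholds $(\log H_i)^{-2}$ sum to $O(1/\log H_0)$ rather than $O(1)$, which the choice of threshold $(\log H)^{-2}$ is exactly designed for. The other point is checking the constant $\Mp$ in front of $\log H_j$ in \eqref{1*}, which needs the low-block bound $\sum_{1\le|h|<H}\frac1{2|h|}\le\log H+1$.
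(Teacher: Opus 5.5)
Your proposal is correct and follows essentially the same route as the paper. The paper likewise discretizes via $E^0_m$ on $\Scal_{m^{-1/2+\delta}}$ and splits the frequencies into four blocks, controlled respectively by $E^1_m$, by $E^2_m$ (after a Lipschitz shift to $\Scal_{(\log m)^{-\beta-1}}$), by $E_m(H_i)$ (after a shift to $\Scal_{H_i^{-4/3}}$), and by the trivial bound. It then uses the Lipschitz bound on $|h|<H_0$ for \eqref{2*}, and $|e^{\pi i\theta}-e^{\pi i\theta'}|\le\pi|\theta-\theta'|$ together with \eqref{1*} for \eqref{3*}. One point is worth making explicit: in \eqref{2*}, apply the $E^0_m$ shift to the full sums $\B_m(a,t)$ and $\B_m(a,t')$, moving both to grid points, before splitting into blocks. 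The top block $|h|\ge(\log m)^\beta$ alone is only controlled on the grid, and your $m^{-\delta/(2\alpha)}$ term implicitly accounts for this.
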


To prove Proposition~\ref{Pro a not in Ep}, we combine the following lemma with a simple discretization argument. For any function \(f:[0,1]\to\mathbb{C}\) and any \(0<\tau<1\), we have
\begin{equation}\label{propri S_rho}
\sup_{t\in[0,1]} |f(t)|
\leq 
\max_{t\in \Scal_{\tau}} |f(t)|
+ \sup_{\substack{t,t'\in [0,1]\\ |t-t'|\leq \tau}} |f(t)-f(t')|.
\end{equation}

\begin{lem}\label{lem gam lip and sum H<h<2H} Let $m$ be an integer. For all $|h|<m/2,$ $\gamma_m(h,\cdot)$ is almost $\pi/2$-Lipschitz, namely, for all $ 0\leq  t' \leq t \leq 1$ we have
\begin{equation}\label{eq almost lipchitz 1}
|\gamma_m(h,t)-\gamma_m(h,t')|
\leq \frac{\pi}{2}\left(|t-t'| +\frac{1}{m} \right).
\end{equation}

\end{lem}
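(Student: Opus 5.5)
The idea is to write $\gamma_m(h,t)$ as a normalized geometric sum and compare the two partial sums term by term. By definition,
\[
\gamma_m(h,t)-\gamma_m(h,t')=\frac{1}{m}\sum_{mt'<k\leq mt} e_m(hk),
\]
and the number of integers $k$ with $mt'<k\leq mt$ is at most $m(t-t')+1$. The trivial bound therefore gives $|\gamma_m(h,t)-\gamma_m(h,t')|\leq |t-t'|+\frac1m$. This is already stronger than \eqref{eq almost lipchitz 1}, because $1\leq \pi/2$. The estimate is uniform in $h$, and it does not need the restriction $|h|<m/2$.

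If one prefers to keep the constant $\pi/2$ in a form matching the later application, one can instead use the closed form of the geometric sum. Writing $N(t)=\lfloor mt\rfloor$, we have for $h\not\equiv 0$
\[
\gamma_m(h,t)=\frac{1}{m}\,\frac{e_m(h(N(t)+1))-1}{e_m(h)-1}.
\]
The difference of the numerators at $t$ and $t'$ has modulus at most $2|\sin(\pi h(N(t)-N(t'))/m)|$. The denominator has modulus $2|\sin(\pi h/m)|$. Applying $|\sin(nx)|\leq n|\sin x|$ bounds the ratio by $N(t)-N(t')\leq m|t-t'|+1$. This returns the same bound as before.

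There is no genuine obstacle here. The only point needing care is counting the lattice points in $(mt',mt]$, which is where the additive $1/m$ comes from. For $h=0$, $\gamma_m(0,t)=(N(t)+1)/m$, and the same counting bound applies directly.
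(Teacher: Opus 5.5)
Your proof is correct, and your main argument takes a genuinely different route from the paper's; it is also sharper. The paper restricts to $0<|h|<m/2$ and uses the closed form of the geometric sum. It writes the modulus of the difference as $\frac{1}{m|e_m(h)-1|}\,|e_m(h\lfloor mt\rfloor)-e_m(h\lfloor mt'\rfloor)|$ and then bounds numerator and denominator separately:
\begin{itemize}
\item the numerator by $|e^{ix}-e^{iy}|\leq|x-y|$, giving $\frac{2\pi|h|}{m}(\lfloor mt\rfloor-\lfloor mt'\rfloor)$;
\item the denominator from below by $4|h|/m$, via $\sin x\geq 2x/\pi$ on $[0,\pi/2]$ (the same estimate behind \eqref{Bound gamma p}).
\end{itemize}
The factor $\pi/2$ and the hypothesis $|h|<m/2$ are exactly the price of decoupling these two estimates.

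Your triangle-inequality count of the $\lfloor mt\rfloor-\lfloor mt'\rfloor\leq m(t-t')+1$ unimodular terms gives constant $1$. It needs no restriction on $h$ and no separate treatment of $h=0$. It also loses nothing downstream, since every later application of \eqref{eq almost lipchitz 1} only multiplies the Lipschitz constant by the number of frequencies involved.

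Your second route does not actually ``keep the constant $\pi/2$''. Bounding the ratio $|\sin(nx)|/|\sin x|\leq n$ jointly again yields constant $1$. It is the trivial bound in disguise, since that ratio is the modulus of a sum of $n$ unit vectors.
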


\begin{proof} The case \(h=0\) is trivial, so we may assume that $0<|h|<m/2$ and  $t'\leq t$. Since
$\lfloor mt\rfloor-\lfloor mt'\rfloor\leq
mt-mt'+1.$ Hence
\begin{align*}
|\gamma_m(h,t)-\gamma_m(h,t')|
&=
\frac{1}{m|e_m(h)-1|}\left|
e_m(h\lfloor mt\rfloor)-e_m(h\lfloor mt'\rfloor)
\right|\\
&\leq 
\frac{1}{4|h|}\cdot \frac{2\pi
\left| h\right| }{m}\left(\lfloor mt\rfloor-\lfloor mt'\rfloor\right)
\leq 
\frac{\pi }{2m}\left(mt-mt'+1\right).
\end{align*}
\end{proof}

\begin{proof}[Proof of Proposition \ref{Pro a not in Ep}]
To begin, we recall that:
\begin{equation} \label{(1)}
\sup_{t\in [0,1]} \left|\B_m(a,t)\right|
\leq 
\max_{t\in \Scal_{m^{-1/2+\delta}}} \left|\B_m(a,t)
\right|
+\underset{|t-t'|\leq m^{-1/2+\delta}}{\sup_{t,t'\in [0,1]}} \left|\B_m(a,t)-\B_m(a,t')
\right|.
\end{equation} 
Let \(0 \leq j \leq J\). Assume that \(a \notin \Ecal_m(H_j)\). Then \(a \notin E_m^0\), so we already have
\begin{equation} \label{(2)}
\max_{|t-t'|\leq m^{-1/2+\delta}}\left|
\B_m(a,t)-\B_m(a,t')
\right| \leq
m^{-\frac{\delta}{2\alpha}}.
\end{equation}
Now, by \eqref{eq term pair h=m/2}, $\B_m(a,t) = \sum_{0 \leq|h|< m/2}\gamma_m(h,t)\widehat{\varphi_a}(h) + O(1/m),$ we have
\begin{align}
\sup_{t\in \Scal_{m^{-1/2+\delta}}} \left|\B_m(a,t)\right|
& \leq \label{eq 3.1}
\max_{t\in \Scal_{m^{-1/2+\delta}}} \left|
\sum_{(\log m)^\beta\leq|h|< m/2}\gamma_m(h,t)\widehat{\varphi_a}(h)
\right| + O\left(\frac{1}{m} \right)\\
&\label{eq 3.2}
+\max_{t\in \Scal_{m^{-1/2+\delta}}} \left|
\sum_{(\log m)^2\leq|h|< (\log m)^\beta}\gamma_m(h,t)\widehat{\varphi_a}(h)
\right| \\
\label{eq 3.3}
&+{\sum_{i=j}^{J}}
\max_{t\in \Scal_{m^{-1/2+\delta}}} \left|
\sum_{H_i \leq|h|< H_{i+1}}\gamma_m(h,t)\widehat{\varphi_a}(h)
\right|\\
\label{eq 3.4}
&+\max_{t\in \Scal_{m^{-1/2+\delta}}} \left|
\sum_{|h|<H_j}\gamma_m(h,t)\widehat{\varphi_a}(h)
\right|.
\end{align}
Since \(a \notin E_m^1\), we obtain a bound for the terms on the right-hand side of \eqref{eq 3.1}.
Since \(a \notin E_m^2\), we obtain a bound for \eqref{eq 3.2} by using \eqref{propri S_rho},
\begin{align*}
&\max_{t\in \Scal_{m^{-1/2+\delta}}} \left|
\sum_{(\log m)^2\leq|h|< (\log m)^\beta}\gamma_m(h,t)\widehat{\varphi_a}(h)
\right|\\
&\leq
\max_{
\left|t-t' \right| 
\leq (\log m)^{-\beta-1}} \left|
\sum_{(\log m)^2\leq|h|< (\log m)^\beta}(\gamma_m(h,t)-\gamma_m(h,t'))\widehat{\varphi_a}(h)
\right|+ \frac{1}{\log \log m}\\
&\leq 2(\log m)^\beta \frac{\pi}{2}\left(\frac{1}{(\log m)^{\beta +1} }+ \frac{1}{m} \right)\Mp + \frac{1}{\log \log m}
.
\end{align*}
By \eqref{propri S_rho} and since $a\not\in E_m(H_i)$ for $i \in [\![ j, J]\!] 
$, we obtain the bound of \eqref{eq 3.3}, 
\begin{align*}&\max_{t\in \Scal_{m^{-1/2+\delta}}} \left|
\sum_{H_i \leq|h|< H_{i+1}}\gamma_m(h,t)\widehat{\varphi_a}(h)
\right|\\
&\leq 
\max _{t\in \Scal_{H_i^{-4/3}}} \left|\sum_{H_i \leq |h|< H_{i+1}} \gamma_m(h,t)\widehat{\varphi_a}(h)\right| +
\max _{|t-t'|\leq H_i^{-4/3}} \left|\sum_{H_i \leq |h|< H_{i+1}} (\gamma_m(h,t)-\gamma_m(h,t'))\widehat{\varphi_a}(h)\right|\\
&\leq 
\frac{1}{(\log H_i)^2} +
2H_i \frac{\pi }{2}\left(\frac{1}{H_i^{4/3}}+\frac{1}{m}\right)
\Mp
\ll  \frac{1}{(\log H_i)^2}.
\end{align*}
For \eqref{eq 3.4}, we use the trivial bound with \eqref{Bound gamma p}.
Hence, by combining with \eqref{(1)} and \eqref{(2)},  there exist $c_1>0$ and $c_2>0$ such that
\begin{align*}
\sup_{t\in [0,1]} \left|\B_m(a,t)\right|
&\leq \frac{c_1}{\log\log m} +{\sum_{i=j}^{J}}
\frac{c_2}{(\log H_i)^2}+ \Mp +
\sum_{1\leq |h|<H_j}\frac{\Mp}{2|h|}
.
\end{align*}
Thus, for $H_0$ large enough, we obtain \eqref{1*}.

We follow the same approach to prove \eqref{2*}. By \eqref{(1)} and \eqref{(2)} we have
\begin{align*}
\underset{|t-t'|\leq H_0^{-4/3}}{\sup_{t,t'\in [0,1]}} \left|\B_m(a,t)-\B_m(a,t')\right|
& \leq
2\max_{t\in \Scal_{m^{-1/2+\delta}}} \left|
\sum_{H_0\leq|h|< m/2}\gamma_m(h,t)\widehat{\varphi_a}(h)
\right| + O\left( m^{\frac{-\delta}{2\alpha}}\right)\\
&+\underset{|t-t'|\leq H_0^{-4/3}}{\max_{t,t'\in \Scal_{m^{-1/2+\delta}}}} \left|
\sum_{|h|<H_0}(\gamma_m(h,t)-\gamma_m(h,t'))\widehat{\varphi_a}(h)
\right|.
\end{align*}
Combining \eqref{eq 3.1}, \eqref{eq 3.2}, and \eqref{eq 3.3} with their bounds, we obtain
$$
2\max_{t\in \Scal_{m^{-1/2+\delta}}} \left|
\sum_{H_0\leq|h|< m/2}\gamma_m(h,t)\widehat{\varphi_a}(h)
\right|
\ll \frac{1}{\log\log m} +{\sum_{i=0}^{J}}
\frac{1}{(\log H_i)^2}\ll  \frac{1}{\log H_0}.
$$
Moreover, the last term is controlled using \eqref{eq almost lipchitz 1}. Therefore
\[
\underset{|t-t'|\leq H_0^{-4/3}}{\max_{t,t'\in \Scal_{m^{-1/2+\delta}}}} \left|
\sum_{|h|<H_0}(\gamma_m(h,t)-\gamma_m(h,t'))\widehat{\varphi_a}(h)
\right|
\leq (2H_0 -1)\frac{\pi}{2}\left(H_0^{-4/3}+\frac{1}{m}\right) \Mp
\ll
\frac{1}{H_0^{1/3}} .
\]
Thus, since $ H_0 \leq (\log m)^2$, we conclude that \eqref{2*} holds:
\begin{align*}
\underset{|t-t'|\leq H_0^{-4/3}}{\sup_{t,t'\in [0,1]}} \left|\B_m(a,t)-\B_m(a,t')\right|
\ll \frac{1}{\log H_0}.
\end{align*}

Finally, for $|\theta-\theta'|\leq H_0^{-2/3}$, by \eqref{1*}, we have
\begin{align*}
\left|\Re\left(e^{\pi i\theta}\B_m(a,t)\right)-\Re\left(e^{\pi i\theta'}\B_m(a,t)\right)\right|
&=
\left|\Re\left((e^{\pi i\theta}-e^{\pi i\theta'})\B_m(a,t)\right)\right|
\leq
\pi |\theta-\theta'|\cdot 
\left|\B_m(a,t)\right|\\
&\leq
\pi \frac{(\Mp +1)\log H_0 }{H_0^{2/3}}
\ll \frac{1}{\log H_0}.
\end{align*}
This completes the proof.
\end{proof}

\subsection{Estimating the Laplace transform of the partial sums}

We now adapt Lamzouri's  \cite[Proposition 7.2]{Lam20}, to use in our setting; namely, we have

\begin{pro}\label{pro 7.2 adapted H_0}
Let $m$ be large and $H_0$ be a large enough integer such that $H_0\leq (\log m)^2$. Assume Assumptions (\hyperlink{Ass Law}{Law}) and (\hyperlink{Ass Tightness}{Tightness}). Then there exists a set $\Ecal_m(H_0) \subset \Omega_m$ with cardinality
$$\frac{|\Ecal_m(H_0)|}{|\Omega_m|} \ll \exp\left(-\frac{\delta H_0^{1/2}}{2}\right),$$ 
where $\delta $ is the constant in Assumption (\hyperlink{Ass Tightness}{Tightness}), such that for all complex numbers $s$ with $|s| \leq \frac{\delta H_0^{1/2}}{50(\Mp+1)(\log H_0)^2}$, uniformly for $t,\theta\in [0,1]$, we have
\begin{align*}
    \frac{1}{|\Omega_m|} \sum_{
a\in \Omega_m\setminus \Ecal_m(H_0)}
\exp
\left(
s \cdot \sum_{|h|< m/2}
\gamma_m(h,t,\theta)\widehat{\varphi_a}(h)
\right)
&= \E \left[\exp \left(s \cdot \sum_{|h|< m/2} \gamma_m(h,t,\theta)\X(h)\right) \right]\\
&+ O \left(\exp \left(
-\frac{2\delta H_0^{1/2}}{5\log H_0}
\right)\right),
\end{align*}
where $\gamma_m(h,t,\theta) = \Re\left(e^{\pi i \theta } \gamma_m(h,t)\right) \in\R$ and $(\X(h))_{h\in\Z}$ denotes a family of independent and identically distributed random variables with the same distribution as $\X$ in Assumption (\hyperlink{Ass Law}{Law}).
\end{pro}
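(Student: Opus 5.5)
We follow Lamzouri's proof of \cite[Proposition 7.2]{Lam20}. Take the exceptional set to be $\Ecal_m(H_0)$ from \eqref{eq def Ecal_m(H_j)}; its size is given by \eqref{eq bound Ecal(H_0)} in Lemma~\ref{lem card set E(H)}. The key idea is to split the Fourier sum at height $H_0$. Fix $t,\theta\in[0,1]$. By \eqref{eq term pair h=m/2 avec theta}, write
\[
\sum_{|h|<m/2}\gamma_m(h,t,\theta)\widehat{\varphi_a}(h)=L_a+R_a,
\]
where $L_a$ is the sum over $|h|<H_0$ and $R_a$ is the sum over $H_0\le|h|<m/2$.

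\textbf{Step 1: pointwise bounds off the exceptional set.} For $a\notin\Ecal_m(H_0)$, the argument in the proof of Proposition~\ref{Pro a not in Ep} (the bounds for \eqref{eq 3.1}--\eqref{eq 3.3}, summed over the dyadic blocks $j\ge0$) gives $|R_a|\ll 1/\log H_0$. Since $|\gamma_m(h,t,\theta)|\le|\gamma_m(h,t)|$, this also holds after taking real parts. The trivial bound \eqref{Bound gamma p} gives $|L_a|\le \Mp(2+\log H_0)$. Hence
\[
e^{s(L_a+R_a)}=e^{sL_a}\bigl(1+O(|s|/\log H_0)\bigr).
\]
This error is not yet small, so we do not use it naively. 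Instead we truncate the Taylor series of $e^{sR_a}$ at order $N\asymp H_0^{1/2}/\log H_0$. On the complement of $\Ecal_m(H_0)$, using $|sR_a|\le |s|/\log H_0$, the Taylor tail is at most $(e|s|/(N\log H_0))^N$, which is $\le e^{-N}$ for $|s|$ in the stated range.

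\textbf{Step 2: moment comparison.} It remains to compare
\[
\frac{1}{|\Omega_m|}\sum_{a\notin\Ecal_m(H_0)} e^{sL_a}\frac{(sR_a)^n}{n!},\qquad n\le N,
\]
with its probabilistic analogue. First reinsert the exceptional $a$. Their contribution is bounded by Cauchy--Schwarz: $|e^{sL_a}|\le e^{|s|\Mp(2+\log H_0)}$, the moments of $R_a$ are controlled by Lemma~\ref{lemme 6.3}, and the mass is $\exp(-\delta H_0^{1/2}/2)$ by Lemma~\ref{lem card set E(H)}. Since $|s|\log H_0\ll H_0^{1/2}/\log H_0$, this is $O(\exp(-2\delta H_0^{1/2}/(5\log H_0)))$. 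Next expand $e^{sL_a}$ as a Taylor series up to order $K\asymp H_0^{1/2}$. Its terms are bounded by $(|s|\Mp\log H_0)^k/k!$, so the tail beyond $K$ is negligible. Then each term $L_a^kR_a^n$ with $k+n\le \log m/\log\log m$ is a product of powers of linear forms in $\widehat{\varphi_a}(h)$. Lemma~\ref{Pro 2.2} (with $y=0$, which handles $h=0$) replaces it by $\E[L^kR^n]$ up to an error $m^{-1/2}(C_1\log m)^{k+n}$. Summed over all terms, this error is $\ll m^{-1/4}$.

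\textbf{Step 3: the probabilistic side.} Do the same operations in reverse on $\E[e^{s(L+R)}]$. Bound the tails using Lemma~\ref{Lemme 3.1} and Lemma~\ref{lem:moments h=0}: in particular $\E|R|^{2n}\le (2\Mp^2 n/H_0)^n$, so the Taylor tail in $R$ beyond $N$ is at most $e^{-cN}$.

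\textbf{Main obstacle.} The hardest part is the bookkeeping of parameters. One must choose $N,K$ so that several competing losses all fit under $\exp(-2\delta H_0^{1/2}/(5\log H_0))$ at once: the Taylor truncations, the factor $e^{|s|\Mp\log H_0}$ appearing with the exceptional set, and the condition $k+n\le\log m/\log\log m$ (which holds since $H_0\le(\log m)^2$). The constraint $|s|\le \delta H_0^{1/2}/(50(\Mp+1)(\log H_0)^2)$ is exactly what makes $e^{|s|(\Mp+1)\log H_0}\le\exp(\delta H_0^{1/2}/(50\log H_0))$, which is absorbed by the savings. Uniformity in $t,\theta$ is automatic, since every bound used depends only on $|\gamma_m(h,t,\theta)|\le 1/(2|h|)$.
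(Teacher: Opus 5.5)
Your route is sound but differs from the paper's. The paper never splits the Fourier sum. On $\Omega_m\setminus\Ecal_m(H_0)$ it uses the pointwise bound \eqref{1*} for the whole sum, $(\Mp+1)\log H_0$, and expands the full exponential once, to order $N=\lfloor 2\delta H_0^{1/2}/(5\log H_0)\rfloor$. It then reinserts the exceptional $a$ in each moment $k\le N$ using pointwise bounds on layers:
\begin{itemize}
\item \eqref{1*} at scale $H_{j+1}$ on $E_m(H_j)\setminus\Ecal_m(H_{j+1})$, against mass $\exp(-\delta H_j^{1/2}/2)$;
\item the trivial bound $3\Mp\log m$ on $E^0_m\cup E^1_m\cup E^2_m$, against mass $m^{-\delta/2}$.
\end{itemize}
Lemma~\ref{Pro 2.2} and Lemma~\ref{lem:moments h=0} then finish. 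You instead split into a low part $L_a$, bounded by $\Mp(2+\log H_0)$ for \emph{every} $a$, and a high part $R_a$, controlled on $\Ecal_m(H_0)$ by Cauchy--Schwarz and Lemma~\ref{lemme 6.3}. At that step you need only the total mass of $\Ecal_m(H_0)$, not its layered structure. The price is a double Taylor expansion and mixed moments $L_a^kR_a^n$. Lemma~\ref{Pro 2.2} as stated only treats powers of one linear form and its conjugate over a common range of $h$, but the multilinear version you need follows verbatim from \eqref{eq moment Ass Law}.

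One parameter choice has to change. Take $K\asymp H_0^{1/2}$ with $H_0$ close to $(\log m)^2$, which is exactly the regime needed at the top of the $V$-range. Then $K+N\asymp\log m$, outside the range $k\le\log m/\log\log m$ where \eqref{eq moment Ass Law} is assumed. So your parenthetical ``which holds since $H_0\le(\log m)^2$'' is false. Take $K=N=\lfloor 2\delta H_0^{1/2}/(5\log H_0)\rfloor$ instead. For $H_0$ large, $|sL_a|\le|s|\Mp(2+\log H_0)\le\delta H_0^{1/2}/(40\log H_0)\le K/e^2$, so the Taylor tail of $e^{sL_a}$ beyond $K$ is still $\ll e^{-K}$, which is exactly the target error. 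Moreover $K+N\le\frac{2\delta}{5}\cdot\frac{\log m}{\log\log m}$. This keeps you inside the ranges of \eqref{eq moment Ass Law} and Lemma~\ref{lemme 6.3}, and it gives your $O(m^{-1/4})$.

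Two justifications also need tightening:
\begin{itemize}
\item The bounds for \eqref{eq 3.1}--\eqref{eq 3.3} give $|R_a|\ll 1/\log H_0$ only for $t\in\Scal_{m^{-1/2+\delta}}$. To extend to all $t$, use $a\notin E^0_m$ together with \eqref{eq almost lipchitz 1} for the $|h|<H_0$ part.
\item The passage to $\gamma_m(h,t,\theta)$ should use $\widehat{\varphi_a}(h)\in\R$, so that the sum is $\Re(e^{\pi i\theta}\,\cdot\,)$ of the complex one. The termwise inequality $|\gamma_m(h,t,\theta)|\le|\gamma_m(h,t)|$ cannot transfer a bound that relies on cancellation.
\end{itemize}
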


\begin{proof}
Let $\Ecal_m(H_0)$ be the set defined in \eqref{eq def Ecal_m(H_j)}, then we have the desired bound on $| \Ecal_m(H_0)| $ by \eqref{eq bound Ecal(H_0)}. Moreover, if $a \not \in \Ecal_m(H_0) $ then by \eqref{1*},
we have for $H_0$  large enough,
\[
\sup_{t,\theta\in[0,1]^2}\left|\sum_{|h|< m/2}
\gamma_m(h,t, \theta )\widehat{\varphi_a}(h)
\right| = \sup_{t\in[0,1]}
\left|
\sum_{|h|< m/2}
\gamma_m(h,t)\widehat{\varphi_a}(h)
\right|
\leq (\Mp+1)\log H_0.
\]
Let $N = \left\lfloor \frac{2\delta H_0^{1/2}}{5\log H_0}\right\rfloor$ and we assume that $|s|\leq \frac{N+1}{20(\Mp+1)\log H_0}$. Then we have
\begin{align*}\frac{1}{|\Omega_m|}
\sum_{a\in \Omega_m \setminus \Ecal_m(H_0)}
\exp
\left(
s \sum_
{|h|< m/2}
\gamma_m(h,t,\theta)\widehat{\varphi_a}(h)
\right)\\
=
\sum_{k=0}^N
\frac{s^k}{k!}
\frac{1}{|\Omega_m|}
\sum_{a\in \Omega_m \setminus \Ecal_m(H_0)}
\left(
\sum_
{|h|< m/2}
\gamma_m(h,t,\theta)\widehat{\varphi_a}(h)
\right)^k
+ R,\end{align*}
where
\begin{align*}
R &\ll \sum_{k>N} \frac{|s|^k}{k!} ((\Mp +1)\log H_0)^k \\
&\ll  \sum_{k>N}
\frac{e^k}{(N+1)^k}\left( |s|(\Mp+1) \log H_0\right)^k
\leq  \sum_{k>N}
\left(\frac{ e}{20}\right)^k
\ll e^{-N},
\end{align*}
by Stirling’s formula and our assumptions on $s$ and $N$. Furthermore, note that we have for $m$ large enough, and all $a\in E^0_m\cup E^1_m\cup E^2_m$, 
\[\sum_{|h|< m/2}
|\gamma_m(h,t,\theta)\widehat{\varphi_a}(h)| \leq  \Mp+ \sum_{1 \leq |h| < m/2}
\frac{\Mp }{2|h|}\leq 3\Mp  \log m.
\]
Since $H_0\leq (\log m)^2$, then $N\leq \frac{2\delta H_0^{1/2}}{5\log H_0} \leq \frac{\delta\log m}{5\log\log m}$ and by \eqref{eq bound E^1,2,3}, for all $k\leq N,$
\begin{align*}
&\frac{1}{|\Omega_m|}
\sum_{a\in E^0_m\cup E^1_m\cup E^2_m}\left(\sum_{|h|< m/2}
\gamma_m(h,t,\theta)\widehat{\varphi_a}(h)\right)^k\\
&\leq 
m^{-\delta/2}\exp\left( \frac{\delta\log m}{5\log\log m}\log( 3\Mp \log m)\right)\\
&\ll m^{-\frac{\delta}{5}}.
\end{align*}
Now, let $j\in [\![ 0,J ]\!] ,
$ if $ a\in E_m(H_j)\setminus \Ecal_m(H_{j+1})$ where $ \Ecal_m(H_{J+1}):= E^0_m\cup E^1_m\cup E^2_m$, it follows from \eqref{eq bound E(H j)} and \eqref{1*} that for all integers
$0 \leq  k \leq  N$, for $H_0\leq H_j$ then $N\leq \frac{2\delta H_j^{1/2}}{5\log H_j} $ and we have 
\begin{align*}
&\frac{1}{|\Omega_m|}
\sum_{a\in E_m(H_j)\setminus \Ecal_m(H_{j+1})}
\left(
\sum_
{|h|< m/2}
\gamma_m(h,t,\theta)\widehat{\varphi_a}(h)
\right)^k\\
&\leq \frac{1}{|\Omega_m|}
\sum_{a\in E_m(H_j)\setminus \Ecal_m(H_{j+1})}
\left(
(\Mp+1)\log H_{j+1}
\right)^N\\
&\ll 
\exp\left(-\frac{\delta H_j^{1/2}}{2}\right)
\exp\left(\frac{2\delta H_j^{1/2}\log[(\Mp +1)\log 2H_{j}]}{5\log H_j}\right)\\
&\ll \exp\left(-\frac{\delta H_j^{1/2}}{5}\right).
\end{align*}
Since 
\[ 
\Ecal_m(H_0) = E^0_m\cup E^1_m\cup E^2_m\cup
\bigcup_{0 \leq  j\leq J}
\left(E_m(H_j)\setminus \Ecal_m(H_{j+1})\right),
\] 
by collecting these error terms, we obtain
\[
\frac{1}{|\Omega_m|}
\sum_{a\in \Ecal_m(H_0)}
\left(
\sum_
{|h|< m/2}
\gamma_m(h,t,\theta)\widehat{\varphi_a}(h)
\right)^k
\ll \exp\left(-\frac{\delta H_0^{1/2}}{5}\right).
\]
Thus, by Lemma \ref{Pro 2.2}, we obtain
\begin{align*}
&\frac{1}{|\Omega_m|}
\sum_{a\in \Omega_m \setminus \Ecal_m(H_0)}
\left(
\sum_{|h|< m/2}
\gamma_m(h,t,\theta)\widehat{\varphi_a}(h)
\right)^k\\
&=
\frac{1}{|\Omega_m|}
\sum_{{a\in \Omega_m}}
\left(
\sum_{|h|< m/2}
\gamma_m(h,t,\theta)\widehat{\varphi_a}(h)
\right)^k
+ O \left(\exp\left(-\frac{\delta H_0^{1/2}}{5}\right)\right)\\
&= \E
\left[
\left(
\sum_{|h|< m/2}
\gamma_m(h,t,\theta)\X (h)
\right)^k\right]
+ O\left( \exp\left(-\frac{\delta H_0^{1/2}}{5}\right)+m^{-1/2}
\left[C_1 \sum_{|h|<m/2}|\gamma_m(h,t,\theta)|\right]^{k}\right).   
\end{align*}
And
\[m^{-1/2}
\left[C_1\sum_{|h|<m/2}|\gamma_m(h,t,\theta)|\right]^{k}
\leq 
m^{-1/2}
\left[C\log m\right]^{N}
\ll
\exp\left(-\frac{\delta H_0^{1/2}}{5}\right)
.\]
Since $|\gamma_m(0,t,\theta)|\leq\frac{\lfloor mt\rfloor+1}{m}\leq 2$ and
$|\gamma_m(h,t,\theta)|\leq\frac{1}{2|h|}$ for $|h|\geq1$ by \eqref{Bound gamma p},
and since $k>N\geq 4$, it follows from Lemma~\ref{lem:moments h=0} and Stirling's
formula that
\begin{align*}
    &
    \left| 
    \sum_{k>N} 
\frac{s^k}{k!}
\E
\left[\left(
\sum_{|h|< m/2}
\gamma_m(h,t,\theta)\X (h)
\right)^k\right] \right| 
\leq \sum_{k>N} \frac{e^k|s|^k}{k^k} \cdot \left( 5 \Mp\log k\right)^k\\
&\ll \sum_{k>N} \left(\frac{5\Mp e|s|\log (N+1)}{N+1}\right)^k 
\ll \sum_{k>N} \left(\frac{\Mp e}{8(\Mp+1)}\right)^k 
\ll e^{- N}.
\end{align*}
Therefore we deduce that
\begin{align*}\frac{1}{|\Omega_m|}&
\sum_{a\in \Omega_m \setminus \Ecal_m(H_0)}
\exp
\left(
s  \sum_{|h|< m/2}
\gamma_m(h,t,\theta)\widehat{\varphi_a}(h)
\right)\\
=&
\sum_{k=0}^N
\frac{s^k}{k!}
\frac{1}{|\Omega_m|}
\sum_{a\in \Omega_m \setminus \Ecal_m(H_0)}
\left(
\sum_
{|h|< m/2}
\gamma_m(h,t,\theta)\widehat{\varphi_a}(h)
\right)^k
+ O \left(e^{-N}\right)\\
=&
\sum_{k=0}^N
\frac{s^k}{k!}
\E\left[\left(
\sum_
{|h|< m/2}
\gamma_m(h,t,\theta)\X(h)
\right)^k\right]
+ O \left(e^{-N} + \exp\left(-\frac{\delta H_0^{1/2}}{5}\right)e^{|s|}\right)\\
=& \E \left[\exp \left(s \sum_{|h|< m/2} \gamma_m(h,t,\theta)\X(h)\right) \right] + O \left(e^{- N}\right) .
\end{align*}
Collecting the above estimates completes the proof.
\end{proof}

An easy calculation gives (see for example page 1501 of \cite{KS16})
\begin{equation}\label{eq unif bound gamma p =gamma}
\gamma_m(h,t) 
= \gamma (h,t) +O\left(\frac{1}{m}\right),
\end{equation}
where $\gamma_m(h,t) $ is defined in  \eqref{S_m(a,t) fourier transform} and $\gamma(h,t) $ is defined in \eqref{eq def gamma(h,t)}.
Hence, to complete the proof of Proposition \ref{pro 7.2 adapted H_0}, we need the following lemma. Thus, we obtain the Laplace transform associated with the universal probabilistic model presented in Corollary \ref{THM asymp  uniform (0,1)^2} and Corollary \ref{Cor asymp T Theta}.

\begin{lem}\label{lem estim log E exp sum gamma_p} Let $m$ be a large number. Let $t, \theta\in [0,1]$ be two real numbers. Assume that $2\leq s \leq m^{1/3}$. Then
\begin{align*}
    \sum_{|h|< m/2} \log \E \left[\exp \Big(s \cdot \gamma_m(h,t,\theta)\X(h)\Big) \right] = 
    \sum_{h\in \Z} \log \E \left[\exp \left(s \cdot  \gamma(h,t,\theta)\X(h)\right) \right] +O(1),
\end{align*} 
where $\gamma_m(h,t,\theta) = \Re\left(e^{\pi i \theta } \gamma_m(h,t)\right) $ and $\gamma(h,t,\theta) = 
 \left| \Re\left(e^{\pi i \theta } \gamma(h,t)\right) \right| $.
\end{lem}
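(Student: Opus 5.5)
Throughout, write $\psi(u):=\log\E[e^{u\X}]$. By Assumption (\hyperlink{Ass Law}{Law}), $\psi$ is even (since $\X$ is symmetric), $0\le\psi(u)\le \Mp|u|$, $\psi(u)=O(u^2)$ for $|u|\le 1$, and $|\psi'(u)|\le \Mp$ everywhere. Because $\psi$ is even, the absolute value in $\gamma(h,t,\theta)$ plays no role, and we may compare $\psi(s\gamma_m(h,t,\theta))$ with $\psi(s\gamma(h,t,\theta))$ term by term. We split the sum over $h$ into three ranges: $|h|\le s$, $s<|h|<m/2$, and $|h|\ge m/2$, where the last range occurs only on the limiting side.

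\emph{Range $|h|\le s$.} By \eqref{eq unif bound gamma p =gamma}, $|\gamma_m(h,t,\theta)-\gamma(h,t,\theta)|\le|\gamma_m(h,t)-\gamma(h,t)|\ll 1/m$, uniformly in $t,\theta$. The Lipschitz bound $|\psi'|\le\Mp$ then gives
\[
|\psi(s\gamma_m(h,t,\theta))-\psi(s\gamma(h,t,\theta))|\ll s/m
\]
for each such $h$. Summing over the $O(s)$ values of $h$ gives a total of $O(s^2/m)=O(m^{-1/3})$, using $s\le m^{1/3}$.

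\emph{Range $s<|h|<m/2$.} Here both arguments are small. Indeed, $|s\gamma(h,t,\theta)|\le s/(\pi|h|)<1$, and by \eqref{Bound gamma p}, $|s\gamma_m(h,t,\theta)|\le s/(2|h|)<1$. Using $\psi(u)\ll u^2$ on $[-1,1]$, each term is $O(s^2/h^2)$ on both sides. The total contribution from this range is therefore $\ll s^2\sum_{|h|>s}h^{-2}\ll s$. This is too crude, so instead we use the quadratic behaviour together with the difference. By the mean value theorem,
\[
|\psi(x)-\psi(y)|\le \sup_{|u|\le 1}|\psi'(u)|\,|x-y|\ll \max(|x|,|y|)\,|x-y|,
\]
since $\psi'(u)=O(u)$ near $0$. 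Here $\max(|x|,|y|)\ll s/|h|$ and $|x-y|\ll s/m$. Summing over $s<|h|<m/2$ gives
\[
\ll \frac{s^2}{m}\sum_{|h|<m}\frac1{|h|}\ll \frac{s^2\log m}{m}=o(1).
\]

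\emph{Tail $|h|\ge m/2$ on the limiting side.} Each term is $\ll s^2/h^2$, so the tail is $\ll s^2/m=o(1)$.

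The only delicate point is the uniformity of \eqref{eq unif bound gamma p =gamma}, namely $\gamma_m(h,t)-\gamma(h,t)=O(1/m)$ uniformly for $|h|<m/2$ and $t\in[0,1]$. We would verify it directly. Write $\gamma_m(h,t)=\frac{1}{m}\cdot\frac{e_m(h(\lfloor mt\rfloor+1))-1}{e_m(h)-1}$. Replacing $\lfloor mt\rfloor+1$ by $mt$ costs $O(|h|/m)\cdot\frac{1}{m|e_m(h)-1|}\ll 1/m$, using $m|e_m(h)-1|\gg|h|$. Replacing $m(e_m(h)-1)$ by $2\pi i h$ introduces a relative error $O(|h|/m)$, which contributes $O(1/m)$ because $|e^{2\pi iht}-1|/|h|\ll 1/|h|$. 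The case $h=0$ is immediate. The same estimate passes to $\gamma_m(h,t,\theta)$ by taking real parts. Combining the three ranges gives the stated $O(1)$, in fact $o(1)$, uniformly in $t,\theta$.
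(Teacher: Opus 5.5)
Your proof is correct. It uses the same two ingredients as the paper: the uniform approximation \eqref{eq unif bound gamma p =gamma} for small $|h|$, and $\log\E[e^{u\X}]\ll u^2$ for $|u|\le 1$ for large $|h|$. You organise the ranges differently, though, and that difference is worth noting.

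The paper cuts at $|h|=s^2$. Below the cut it uses only the crude Lipschitz bound $O(s/m)$ per term, for a total of $O(s^3/m)$; this is exactly where $s\le m^{1/3}$ enters. Above the cut it does not compare the two sides at all, but bounds each of them separately by $\sum_{|h|>s^2}s^2/h^2\ll 1$.

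You cut at $|h|=s$ instead, and keep comparing termwise all the way to $|h|<m/2$. Using $|\psi'(u)|\ll|u|$, each term differs by $O(s^2/(|h|m))$, which sums to $O(s^2\log m/m)=o(1)$. Your argument would in fact remain valid for $s$ up to about $(m/\log m)^{1/2}$. The paper's route is shorter and needs only the size bound of Lemma \ref{estim f}; yours needs derivative information but is sharper. You also verify \eqref{eq unif bound gamma p =gamma} directly, correctly and uniformly in $|h|<m/2$, whereas the paper quotes it from Kowalski--Sawin.

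Two small fixes are needed. First, in your mean value inequality the supremum must be over $u$ between $x$ and $y$, not over $|u|\le 1$; the latter only gives $\ll|x-y|$. Second, the bound $\psi'(u)=O(u)$ deserves a line of justification. For example, $\psi'(0)=0$ by symmetry, and $0\le\psi''(u)\le\Mp^2$ because $\psi''(u)$ is the variance of $\X$ under the exponentially tilted law. Together these give $|\psi'(u)|\le\Mp^2|u|$ for all real $u$.
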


The proof of this lemma relies on the following result from \cite{ABL21}.
\begin{lem}\cite[Lemma 5.3]{ABL21}\label{estim f} Let $f_\X$ be the function defined in \eqref{def f_X}. Then we have
\begin{equation}
  f_\X(t) \ll\left\{
   \begin{array}{l cc l}
         t^2 & \text{if}  & 0\leq t<1\\
     \log(2t) & \text{if}  & 1\leq t \\ 
   \end{array}  
   \right. .
\end{equation}
\end{lem}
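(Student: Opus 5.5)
The plan is to bound $|f_\X(t)|$ directly from the definition \eqref{def f_X}, splitting into the two ranges $0\leq t<1$ and $t\geq 1$. The only inputs needed are the properties of $\X$ from Assumption (\hyperlink{Ass Law}{Law}): $\X$ is symmetric, $|\X|\leq \Mp$ almost surely, and $\P(\X>\Mp-\varepsilon)\gg \varepsilon^{C_2}$. Throughout, the implied constants may depend on $\X$, that is, on $\Mp$, $C_2$ and the constant in this tail bound.

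\emph{Range $0\leq t<1$.} Here $f_\X(t)=\log \E[e^{t\X}]$. By symmetry of $\X$ we have $\E[e^{t\X}]=\E[\cosh(t\X)]\geq 1$, so $f_\X(t)\geq 0$. For the upper bound, $|\X|\leq \Mp$ gives $\E[\cosh(t\X)]\leq \cosh(\Mp t)\leq e^{\Mp^2t^2/2}$. Hence
\[
0\leq f_\X(t)\leq \frac{\Mp^2}{2}t^2 \ll t^2 .
\]

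\emph{Range $t\geq 1$.} Here $f_\X(t)=\log\E[e^{t\X}]-\Mp t$.

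For the upper bound, $\X\leq \Mp$ gives $\E[e^{t\X}]\leq e^{\Mp t}$, hence $f_\X(t)\leq 0$.

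For the lower bound, fix $\varepsilon\in(0,1]$. Keeping only the event $\{\X>\Mp-\varepsilon\}$ in the expectation gives
\[
\E[e^{t\X}]\geq e^{t(\Mp-\varepsilon)}\,\P(\X>\Mp-\varepsilon)\gg e^{t\Mp-t\varepsilon}\,\varepsilon^{C_2}.
\]
Choosing $\varepsilon=1/t$ yields
\[
f_\X(t)\geq -1-C_2\log t-O(1)\gg -\log(2t),
\]
where we use that $\log(2t)\geq \log 2>0$ for $t\geq 1$, so the additive constant is absorbed. Together with $f_\X(t)\leq 0$, this gives $|f_\X(t)|\ll \log(2t)$ for $t\geq 1$.

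The only step that needs any care is the lower bound for $t\geq1$. It relies on the polynomial tail assumption near $\Mp$, and on the choice $\varepsilon=1/t$, which balances the loss $e^{-t\varepsilon}$ against the tail factor $\varepsilon^{C_2}$. Everything else follows immediately from symmetry and boundedness of $\X$.
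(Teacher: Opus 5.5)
Your proof is correct. The paper gives no proof of its own and only cites \cite[Lemma 5.3]{ABL21}, where the argument is essentially yours: use $\E[e^{t\X}]=1+O(t^2)$ for $0\le t<1$ (your $\cosh(\Mp t)\le e^{\Mp^2t^2/2}$ bound is a clean way to do this), and for $t\ge1$ use $e^{\Mp t}\ge \E[e^{t\X}]\ge e^{\Mp t-1}\P(\X>\Mp-1/t)\gg e^{\Mp t}t^{-C_2}$, reading the tail hypothesis for $\varepsilon\in(0,1]$ as you do.
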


\begin{proof}[Proof of Lemma \ref{lem estim log E exp sum gamma_p} ]
For all $h$ such that $|h| > s^2\geq 4$, we use \eqref{Bound gamma p}, and Lemma \ref{estim f} to see that:
\begin{align*}
    \sum_{|h|>s^2} \log \E \left[\exp \left(s \cdot  \gamma_m(h,t,\theta)\X(h)\right) \right] &\ll  \sum_{|h|>s^2} \frac{s^2}{|h|^2} \ll 1,\\
    \sum_{|h|>s^2} \log \E \left[\exp \left(s \cdot  \gamma(h,t,\theta)\X(h)\right) \right]  &\ll  \sum_{|h|>s^2} \frac{s^2}{|h|^2} \ll 1.
\end{align*}
For all \(h\) with \(|h| \leq s^2\), we use \eqref{eq unif bound gamma p =gamma}, the symmetry of the distribution of \(\X(h)\), and the bound \(|\X(h)| \leq \Mp\), to obtain
\[
\log \E \left[\exp \left(s \cdot  \gamma_m(h,t,\theta)\X(h)\right)\right] = 
\log \E \left[
\exp \left(s \cdot \gamma(h,t,\theta) \X(h) 
\right) \right] +O\left(\frac{s}{m}\right).
\]
Therefore,
\begin{align*}
    \sum_{|h|\leq s^2} \log \E \left[\exp \left(s \cdot  \gamma_m(h,t,\theta)\X(h)\right) \right]  -
    \sum_{|h|\leq s^2} \log \E \left[\exp \left(s \cdot  \gamma(h,t,\theta)\X(h)\right) \right] \ll s^2\cdot\frac{s}{m}\leq 1.
\end{align*}
Combining these estimates completes the proof.
\end{proof}

\subsection{Distribution of the maximum of partial sums}

In this section, we seek an upper bound for the following set:
\[
\Ncal_m(T,\Theta, V) := \frac{1}{|\Omega_m|}\left|\left\{
a\in \Omega_m\; \; \Big|\; \;
\underset{t\in T}{\sup_{\theta\in\Theta}}\,
\Re\left(
\frac{e^{\pi i \theta}}{\sqrt{m}}\sum_{0\leq n\leq m t} \varphi_a(n)
\right)> V
\right\}\right|
\]
where $T \subset [0,1]$ and $ \Theta \subset [0,1]$ are intervals and similarly we write $\Ncal_m^- (T,\Theta, V)$ for the proportion of $a \in \Omega_m$ such that $
\sup_{\substack{t\in T\\ \theta\in\Theta}}
\Re\left(-
\frac{e^{\pi i \theta}}{\sqrt{m}}\sum_{0\leq n\leq m t} \varphi_a(n)
\right)> V$.

\begin{pro}\label{Pro final canonic unif T,Theta} Assume Assumptions (\hyperlink{Ass Law}{Law}) and (\hyperlink{Ass Tightness}{Tightness}). Let $m$ be a large enough integer. Assume that for all $s \geq \pi,$ uniformly for intervals $T\subset[0,1]$ and $\Theta\subset[0,1]$, we have
\begin{align*}
\underset{t\in T}{\sup_{\theta\in\Theta}}
\left(
\log \E \left[\exp \Big(s \cdot \sum_{h \in \Z}
   \gamma(h,t,\theta)\X(h)\Big) \right] \right)
     &\leq  \frac{\Mp}{\pi} s \log s+B_{T,\Theta} s+ O\left(\frac{s}{\log s}\right),
\end{align*}
for some constant $B_{T,\Theta}\in [\kappa- C_3/16,\kappa]$ that depends on $T$ and $\Theta$, where $ \kappa$ is defined in \eqref{def kappa} and $C_3$ is the constant in  Theorem \ref{THM asymp t theta}.
Then, there exists $V_0>0$ such that, uniformly for all intervals $T$ and $\Theta$ and for $V$ in the range $$V_0\leq V \leq (\Mp /\pi)\left(\log_2 m-2\log_3 m\right)-V_0,$$ we have:
\begin{equation}
    \Ncal_m(T,\Theta,V)
    \leq 
\exp\left(- \frac{\Mp}{\pi e} \exp\left(\frac{\pi (V-B_{T,\Theta})}{\Mp}\right)\left[1+O\left(\frac{1}{V}\right)\right] 
\right).
\end{equation}
Furthermore for the same range $V$, we obtain the same bound for $\Ncal_m^-(T,\Theta,V)$.
\end{pro}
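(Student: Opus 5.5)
We follow the standard saddle-point argument of \cite[Section 7]{Lam20} and \cite{ABL21}, now with a supremum over $(t,\theta)$. The approach is to discretize, remove the exceptional set, and apply a Chernoff bound with the Laplace transform of Proposition \ref{pro 7.2 adapted H_0}.

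\textbf{Choice of parameters.} Fix $V$ in the range. Take $s$ at the saddle point of the assumed bound $\frac{\Mp}{\pi}s\log s+B_{T,\Theta}s-sV$, namely
\[
\log s=\frac{\pi(V-B_{T,\Theta})}{\Mp}-1 .
\]
Then $s\asymp e^{\pi V/\Mp}\le (\log m)/(\log\log m)^2$ up to a factor $e^{-\pi V_0/\Mp}$. Choose $H_0$ with
\[
\frac{\delta H_0^{1/2}}{50(\Mp+1)(\log H_0)^2}\ge s \quad\text{and}\quad \frac{\delta H_0^{1/2}}{2}\ge 2s\log s .
\]
For instance $H_0\asymp s^2(\log s)^6$ works. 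This satisfies $H_0\le(\log m)^2$ because of the restriction $V\le(\Mp/\pi)(\log_2 m-2\log_3 m)-V_0$, and this is where that range is used.

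\textbf{Discretization.} For $a\notin\Ecal_m(H_0)$, \eqref{2*} and \eqref{3*} reduce the supremum over $T\times\Theta$ to a finite grid $G\subset T\times\Theta$. The grid has spacing $H_0^{-4/3}$ in $t$ and $H_0^{-2/3}$ in $\theta$, so $|G|\ll H_0^2$. The cost is replacing $V$ by $V-\varepsilon$ with $\varepsilon\ll 1/\log H_0\ll 1/V$. We then bound
\[
\Ncal_m(T,\Theta,V)\le \frac{|\Ecal_m(H_0)|}{|\Omega_m|}+\sum_{(t,\theta)\in G}\frac{1}{|\Omega_m|}\Big|\{a\notin\Ecal_m(H_0):\ \Re(e^{\pi i\theta}\B_m(a,t))>V-\varepsilon\}\Big| .
\]
For each grid point we use Markov's inequality with $e^{s(\cdot)}$. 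The sum over $a\notin\Ecal_m(H_0)$ is handled by Proposition \ref{pro 7.2 adapted H_0}, using \eqref{eq term pair h=m/2 avec theta} to pass from $\Re(e^{\pi i\theta}\B_m)$ to $\sum_{|h|<m/2}\gamma_m(h,t,\theta)\widehat{\varphi_a}(h)$. The probabilistic Laplace transform equals the product of the one-variable transforms by independence. By symmetry of $\X$ we may take absolute values of the coefficients, so Lemma \ref{lem estim log E exp sum gamma_p} replaces it with $\sum_{h\in\Z}\log\E[e^{s\gamma(h,t,\theta)\X}]+O(1)$. The hypothesis then bounds this by $\frac{\Mp}{\pi}s\log s+B_{T,\Theta}s+O(s/\log s)$.

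\textbf{Evaluation.} At the saddle point the exponent is $-\frac{\Mp}{\pi}s+O(s/\log s)+O(s\varepsilon)$. Since $s=e^{\pi(V-B)/\Mp-1}$ and $\log s\asymp V$, this gives
\[
\exp\Big(-\frac{\Mp}{\pi e}e^{\pi(V-B_{T,\Theta})/\Mp}\big(1+O(1/V)\big)\Big).
\]
The extra factor $|G|\ll H_0^2=e^{O(\log s)}$ is absorbed into the $O(1/V)$ term. The error term $\exp(-2\delta H_0^{1/2}/(5\log H_0))$ times $e^{s(V-\varepsilon)}$ is negligible by the choice of $H_0$. The same holds for $|\Ecal_m(H_0)|/|\Omega_m|\ll e^{-\delta H_0^{1/2}/2}$, which is much smaller than $e^{-2s\log s}$.

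\textbf{Main obstacle.} The delicate point is fitting all constraints together. $H_0$ must be large enough that the exceptional set and the Laplace error are negligible against $e^{-cs}$, but $\log H_0$ must also be large enough that the discretization loss $s\varepsilon\ll s/\log H_0$ is $O(s/V)$. At the same time $H_0\le(\log m)^2$ must hold, which forces the upper bound on $V$. I would first try $H_0=\lfloor s^2(\log s)^{6}\rfloor$ and check that each inequality holds for $V\ge V_0$. The uniformity in $T,\Theta$ is automatic, because all bounds from Proposition \ref{pro 7.2 adapted H_0} onward are uniform in $(t,\theta)$. For $\Ncal_m^-$ the same argument applies to $-\B_m$, using that $\X\sim-\X$, so that $\E[e^{-s\gamma_m\X}]=\E[e^{s\gamma_m\X}]$; equivalently one may replace $s$ by $-s$ in Proposition \ref{pro 7.2 adapted H_0}.
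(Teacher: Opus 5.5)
Your argument is the paper's: remove $\Ecal_m(H_0)$, use \eqref{2*}--\eqref{3*} to pass to a grid of at most $H_0^2$ points at the cost $V\mapsto V-c/\log H_0$, then apply the exponential Chebyshev bound with Proposition \ref{pro 7.2 adapted H_0} and Lemma \ref{lem estim log E exp sum gamma_p}, and evaluate at the saddle point. The paper puts the shifted $\hat V$ into the saddle point and you put $V$; this only changes $s$ by a factor $1+O(1/V)$. There is, however, a concrete error in your parameter choice.

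The constraint $s\le \delta H_0^{1/2}/(50(\Mp+1)(\log H_0)^2)$ from Proposition \ref{pro 7.2 adapted H_0} already forces $H_0\gg s^2(\log s)^4$. At the top of the range, $s\asymp \log m/(\log_2 m)^2$, and $s^2(\log s)^4$ is then of exact order $(\log m)^2$. So there is no room for extra logarithms. Your suggested $H_0\asymp s^2(\log s)^6$ has size $(\log m)^2(\log_2 m)^2$ there. Hence the claim that $H_0\le(\log m)^2$ ``because of the restriction on $V$'' is false, and Propositions \ref{Pro a not in Ep} and \ref{pro 7.2 adapted H_0} become unavailable. With that $H_0$ you only obtain the smaller range $V\le(\Mp/\pi)(\log_2 m-3\log_3 m)-V_0$.

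The fix is to take $H_0$ minimal, defining it by $\delta H_0^{1/2}/(50(\Mp+1)(\log H_0)^2)=s$ exactly as the paper does. Then each requirement holds:
\begin{itemize}
\item $H_0\le(\log m)^2$ once $V_0$ is large.
\item $\log H_0\asymp V$, so the discretization loss is $O(s/V)$.
\item $\delta H_0^{1/2}/2=25(\Mp+1)s(\log H_0)^2$, so the exceptional set is negligible.
\item The error in Proposition \ref{pro 7.2 adapted H_0} equals $\exp(-20(\Mp+1)s\log H_0)$, which is negligible against $\exp(-\Mp s/\pi)$.
\end{itemize}
Note also that this error enters multiplied by $e^{-s(V-\varepsilon)}\le 1$, not by $e^{+s(V-\varepsilon)}$, so that check is easier than you wrote. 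With this correction your proof goes through as in the paper.
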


This proposition allows us to prove Theorem \ref{THM Ncal_m(V)} and Theorem \ref{thm structure simp}.

\begin{proof}[Proof of Theorem \ref{THM Ncal_m(V)}] The implicit upper bound is obtained by combining Proposition \ref{Pro final canonic unif T,Theta} and Corollary \ref{THM asymp  uniform (0,1)^2} for $T = [0,1],\Theta = [0,1]$, and hence $B_{[0,1],[0,1]} = \kappa.$
Then, there exists $V_0>0$ such that for all real numbers $V_0\leq  V \leq$
$\dfrac{\Mp}{\pi}\big(\log_2 m -2\log_3 m\big)-V_0$, we have
\begin{align}\label{eq THM 1.1}
    \Ncal_m(V) \leq \Ncal_m([0,1],[0,1],V)+\Ncal_m^-([0,1],[0,1],V) 
    & \leq \exp\left(- \frac{\Mp}{\pi e} \exp\left(\frac{\pi (V-\kappa)}{\Mp}\right)\left[1
+O\left(\frac{1}{V}\right)\right] 
\right),
\end{align}
where $\kappa$ is defined in \eqref{def kappa}. To conclude, the implicit lower bound corresponds to \cite[Theorem~1.6]{ABL21}.
\end{proof}

\begin{proof}[Proof of Theorem \ref{thm structure simp}] 
 Let $ 0\leq  \Delta \leq 1/2$.
By Corollary \ref{Cor asymp T Theta}, for all intervals $T,\Theta$ such that 
\begin{align*}
T\times\Theta 
& \in \{[0,1/2-\Delta]\times [0,1], [1/2+\Delta,1]\times [0,1], [0,1]\times[0,1/2-\Delta^4], [0,1]\times [1/2+\Delta^4,1]\},
\end{align*}
so that
\begin{align*}
T\times\Theta &\subset [0,1]^2\setminus[1/2-\Delta,1/2+\Delta]\times[1/2-\Delta^4,1/2+\Delta^4],
\end{align*} 
we have uniformly for $\Delta$, $B_{T,\Theta} = \kappa - C_3 \Delta^4\in [\kappa- C_3/16,\kappa]$,
\begin{align*}
\sup_{\substack{t\in T\\ \theta\in\Theta}}
\left(
\log \E \left[\exp \Big(s \cdot \sum_{h \in \Z}
   \gamma(h,t,\theta)\X(h)\Big) \right] \right)
     &\leq  \frac{\Mp}{\pi} s \log s+(\kappa - C_3 \Delta^4) s+ O\left(\frac{s}{\log s}\right),
\end{align*}
By using Proposition
\ref{Pro final canonic unif T,Theta}, there exists $V_0>0$ such that, uniformly for all intervals $T$ and $\Theta$ and for $V$ in the range $V_0\leq V \leq \frac{\Mp}{\pi}\left(\log_2 m-2\log_3 m\right)-V_0$, we have:
\begin{align*}
    \Ncal_m(T,\Theta,V)+ \Ncal_m^-(T,\Theta,V)
    &\leq 
\exp\left(- \frac{\Mp}{\pi e} \exp\left(\frac{\pi (V-B_{T,\Theta})}{\Mp}\right)\left[1+O\left(\frac{1}{V}\right)\right] 
\right)\\
&\leq \exp\left(- \frac{\Mp}{\pi e} \exp\left(\frac{\pi (V-\kappa)}{\Mp}\right)\left[\exp\left(\frac{\pi C_3 \Delta^4}{\Mp}\right)
+O\left(\frac{1}{V}\right)\right] 
\right).
\end{align*}
Now, for $V\geq 32$, we choose $\Delta = V^{-1/5}$ and hence,
\[\exp\left(\frac{\pi C_3 \Delta^4}{\Mp}\right)-1 +O\left(\frac{1}{V}\right)=\frac{\pi C_3}{\Mp}\frac{1}{V^{4/5}} +O\left(\frac{1}{V}\right)
.\]
Thus, by \eqref{eq THM 1.1}, there exists $C>0$ such that
\begin{align}\label{eq N(T,theta) C_3 D^4}
\frac{1}{\Ncal_m(V)}\frac{1}{|\Omega_m|}\left|\left\{
a\in \Omega_m \;\big|\; 
\sup_{(t,\theta) \in T\times\Theta}
\left|
\Re\left(e^{\pi i\theta}\B_m(a,t)\right) \right|
>V
\right\}\right|
\leq  
\exp\left(- C\frac{e^{\pi V/\Mp}}{V^{4/5}}\right)  .
\end{align}
Moreover, by Theorem \ref{THM Ncal_m(V)}, 
\begin{align}\label{eq N(V+C_3 D^4)}
\frac{\Ncal_m\left(V+C_3 \Delta^4 \right)}{\Ncal_m(V)}
&\leq  
\exp\left(- C\frac{e^{\pi V/\Mp}}{V^{4/5}}\right) .
\end{align}
for $V \leq
\dfrac{\Mp}{\pi}\big(\log_2 m-2\log_3 m\big)-V'_0$, where $V'_0 = V_0+ C_3/16\geq V_0 +C_3 \Delta ^4$. 

\begin{align*}
\mathcal{A}_m(V)&:= \left\{
a \in \Omega_m \, :\,  
V < \M(\varphi_a)\leq V + C_3\Delta^4, \;\;
\sup_{\substack{(t,\theta) \in 
[0,1]\times [0,1] 
\\
(t,\theta) \not\in 
\left[\frac{1}{2}-\Delta,\frac{1}{2}+\Delta\right]\times \left[\frac{1}{2}-\Delta^4,\frac{1}{2}+\Delta^4\right] }}
\left|\Re\left(e^{\pi i\theta}\B_m(a,t)\right)
\right|\leq V
\right\}\\
&\subset \left\{
a \in \Omega_m \, :\,  
V < \M(\varphi_a)\leq V + C_3 V^{-4/5}, \;\;
| t_a - 1/2|\leq \frac{1}{V^{1/5}}, \;\; | \theta_a - 1/2|\leq \frac{1}{V^{4/5}}
\right\}.
\end{align*}

Hence, by \eqref{eq N(T,theta) C_3 D^4}
and \eqref{eq N(V+C_3 D^4)}, we have
\begin{align*}
\frac{1}{\Ncal_m(V)}\frac{| \left\{
a \in \Omega_m \, :\,  
V < \M(\varphi_a)\right\} 
\setminus \mathcal{A}_m(V)| }{| \Omega_m |}
&\leq  
\exp\left(- C\frac{e^{\pi V/\Mp}}{V^{4/5}}\right) .
\end{align*}
This completes the proof.
\end{proof}

\begin{proof}[Proof of Proposition \ref{Pro final canonic unif T,Theta}] Since the random variable \(\X\) is symmetric, the proof for \(\Ncal_m^-(T,\Theta,V)\) is exactly the same as that for \(\Ncal_m(T,\Theta,V)\). Hence, we only prove the latter case.

We introduce a modified version of $\Ncal_m(T,\Theta,V)$ by removing exceptional elements $a\in\Ecal_m(H_0):$
\[
\overset{\sim}{\Ncal_m}(T,\Theta, V) := \frac{1}{|\Omega_m|}\left|\left\{
a\in \Omega_m\setminus \Ecal_m(H_0) \quad \big|\quad
\underset{t\in T}{\sup_{\theta\in\Theta}}\,
\Re\left(e^{\pi i\theta}\B_m(a,t)\right)> V
\right\}\right|.
\]
Let \(H_0\leq (\log m)^2\) be a sufficiently large integer to be chosen later in terms of $V$. By Lemma \ref{lem card set E(H)}, we obtain (uniformly for $T$ and $\Theta$) that
\begin{equation}\label{eq Ncal_m = 
sim Ncal_m}
\Ncal_m(T,\Theta, V) = 
\overset{\sim}{\Ncal_m}(T,\Theta, V)+O\left(
\exp\left(-\frac{\delta H_0^{1/2}}{2}\right)\right).
\end{equation}

Let $s\in \R^+ $. Since $\un_{X>V}\leq e^{sX-sV}$, we obtain
\[
\overset{\sim}{\Ncal_m}(T,\Theta, V)
\leq \frac{1}{|\Omega_m|}\sum_{a\in \Omega_m\setminus \Ecal_m(H_0)}
\underset{t\in T}{\sup_{\theta\in\Theta}}
\exp\left(
s \Re\left(e^{\pi i\theta}\B_m(a,t)\right) -sV
\right).
\]

Let us put $ E_T =
T\cap \Scal_{H_0^{-4/3}}$ and $E_\Theta  = \Theta\cap \Scal_{H_0^{-2/3}}.$ If $T$ is too small (that is, $T\cap \Scal_{H_0^{-4/3}}=\emptyset$) then let us take $E_T = \{t_0\}$ where $t_0 \in T$, and similarly for $E_\Theta$. In all cases,
we have $ |E_\Theta|\cdot |E_T|\leq H_0^{2}$, and by
Proposition \ref{Pro a not in Ep}, 
\begin{align*}
    &\underset{|t-t'|\leq H_0^{-4/3}}{\sup_{t,t'\in [0,1]}}
    \underset{|\theta-\theta'|\leq H_0^{-2/3}}{\sup_{\theta,\theta'\in [0,1]}} \left|\Re\left(e^{\pi i\theta}\B_m(a,t)\right)-\Re\left(e^{\pi i\theta'}\B_m(a,t')\right)\right|\\
    &\ll 
    \underset{|t-t'|\leq H_0^{-4/3}}{\sup_{t,t',\theta \in [0,1]}}\left|\B_m(a,t)-\B_m(a,t')\right|+\underset{|\theta-\theta'|\leq H_0^{-2/3}}{\sup_{t',\theta,\theta'\in [0,1]}}
    \left|\Re\left(e^{\pi i\theta}\B_m(a,t')\right)-\Re\left(e^{\pi i\theta'}\B_m(a,t')\right)\right|\\
&\ll \frac{1}{\log H_0} .
\end{align*}
Let $\hat{V} = V- \frac{c}{\log H_0 }$, for some suitably large constant $c$. Then, using this last estimate together with \eqref{eq term pair h=m/2 avec theta}, we obtain
\begin{align*}
\overset{\sim}{\Ncal_m}(T,\Theta, V)
&\leq \frac{1}{|\Omega_m|}\sum_{a\in \Omega_m\setminus \Ecal_m(H_0)}
\underset{t\in E_T}{\sup_{\theta\in E_\Theta}}
\exp\left(
s \sum_{|h| < m/2}
\gamma_m(h,t,\theta)\widehat{\varphi_a}(h) +\frac{cs}{\log H_0 } -s V
\right)\\
&
\leq 
\underset{t\in E_T}{\sum_{\theta\in E_\Theta}}
\frac{1}{|\Omega_m|}\sum_{a\in \Omega_m\setminus \Ecal_m(H_0)}
\exp\left(
s \sum_{|h| < m/2}
\gamma_m(h,t,\theta)\widehat{\varphi_a}(h) -s\hat{V}
\right)\\
&\leq H_0^2
\underset{t\in E_T}{\sup_{\theta\in E_\Theta}}
\frac{1}{|\Omega_m|}\sum_{a\in \Omega_m\setminus \Ecal_m(H_0)}
\exp\left(
s \sum_{|h| < m/2}
\gamma_m(h,t,\theta)\widehat{\varphi_a}(h) -s\hat{V}
\right),
\end{align*}
where we have used that $
| E_T| \cdot|E_\Theta | 
\leq H_0^2.$
By Proposition \ref{pro 7.2 adapted H_0} and Lemma \ref{lem estim log E exp sum gamma_p}, we have under the condition $|s| \leq \frac{\delta H_0^{1/2} }{50(\Mp+1)(\log H_0)^2}$ 
\begin{align*}
\overset{\sim}{\Ncal_m}(T,\Theta, V)
&\leq H_0^2 \cdot
\underset{t\in T}{\sup_{\theta\in\Theta}}
\E\left[\exp\left(
s \sum_{|h| < m/2}
\gamma_m(h,t,\theta)\X(h) -s\hat{V}
\right)\right] +  O \left(\exp \left(
-\frac{2\delta H_0^{1/2}}{5\log H_0}
\right)\right)
\\
&
\leq H_0^2 \cdot
\underset{t\in T}{\sup_{\theta\in\Theta}}
\left(\E\left[\exp\left(
s \sum_{h \in \Z}
\gamma(h,t,\theta)\X(h) -s\hat{V}
\right)\right]e^{O(1)} \right)+  O \left(\exp \left(
-\frac{2\delta H_0^{1/2}}{5\log H_0}
\right)\right).
\end{align*}
Furthermore, by our assumption, we obtain
\[
\overset{\sim}{\Ncal_m}(T,\Theta, V)
\leq H_0^2\cdot
\exp\left( A s\log s+Bs-s\hat{V} +O\left(\frac{s}{\log s}\right)\right) +  O \left(\exp \left(
-\frac{2\delta H_0^{1/2}}{5\log H_0}
\right)\right),
\]
where $A = \frac{\Mp}{\pi}$ and $B=B_{T,\Theta}$. 
We choose $H_0$ and $s$ so that
\[ \frac{\delta H_0^{1/2} }{50(\Mp+1)(\log H_0)^2}
 = s = \exp\left(\frac{\hat{V}-B-A}{A}\right)
\]
which optimizes the exponential term. Then
\begin{align*}
\exp\left( A s\log s+Bs-s\hat{V}\right) 
&=\exp\left(s(\hat{V}-B-A)+Bs-s\hat{V}\right)\\
&=\exp\left(-A\exp\left(\frac{\hat{V}-B-A}{A}\right)\right).
\end{align*}
This choice is admissible provided that
\[
\frac{\hat{V}}{A} \leq \log\left( \frac{\delta H_0^{1/2}}{50(\Mp+1)(\log H_0)^2}\right) + \frac{B}{A} + 1.
\]
Recall that $ \hat{V} = V- \frac{c}{\log H_0 }.$ Since $B_{T,\Theta}\in [\kappa- C_3/16,\kappa]$ and $H_0\leq (\log m)^2$, where $H_0$ is sufficiently large (see Proposition \ref{Pro a not in Ep}), there exists $V_0>0$ such that for $V$ in the range 
\[V_0 \leq V \leq A\left(\log_2 m -2\log_3 m\right)-V_0,
\]
we obtain, uniformly for $T$ and $\Theta$,
\begin{align*}
&\overset{\sim}{\Ncal_m}(T,\Theta, V) \\
&\leq H_0^2\cdot
\exp\left(-\frac{A}{e}\exp\left(\frac{V-B}{A}+O\left(\frac{1}{V}\right)\right)+ O\left(\frac{\exp(V/A)}{V}\right)\right)+   O \left(\exp \left(
-\frac{2\delta H_0^{1/2}}{5\log H_0}
\right)\right)\\ 
&= 
\exp\left(-\frac{\Mp}{\pi e}\exp\left(\frac{\pi(V-B_{T,\Theta})}{\Mp}\right)\left[1
+O\left(\frac{1}{V}\right)\right] \right) 
.
\end{align*}
This completes the proof by \eqref{eq Ncal_m = 
sim Ncal_m}.
\end{proof}

\section{Alternative sets of assumptions}

\subsection{Assumptions}

Let $\{\varphi_a\}_{a\in\Omega_m}$ be a family of $m-$periodic 
complex-valued functions. We consider the following assumptions used in \cite{ABL21}.

\textbf{Assumption 1.} Uniform boundedness.
\[\max_{a\in\Omega_m} ||\varphi_a||_\infty\ll 1.\]

\textbf{Assumption 2.} Boundedness of the Fourier transform. There exists a positive constant $\Mp$ such that for all $a\in\Omega_m$ and $h\in \Z /m\Z,$
\[\widehat{\varphi_a}(h)\in [-\Mp,\Mp].
\]

\textbf{Assumption 3.} Joint distribution of the Fourier transform. 
There exists a sequence of independent and identically distributed random variables $\{\X(h)\}_{h\in\Z^*}$ supported on $[-\Mp,\Mp]$, and absolute constants $\eta\geq 1/2$ and $C_1>1$, such that for all positive integers $k\leq \log m/\log\log m,$ and all $k-$tuples $(h_1, ...,h_k)\in\big((-m/2, m/2]\setminus \{0\}\big)^k,$ we have
\[
\frac{1}{|\Omega_m|}\sum_{a\in\Omega_m} \widehat{\varphi_a}(h_1)\cdots \widehat{\varphi_a}(h_k)
 =
 \E[\X(h_1)\cdots \X(h_k)] +O\left(\frac{C_1^k}{m^{\eta}}\right).
\]
Furthermore, if we let $\X$ be a random variable with the same distribution as the $\X(h),$ then $\X$ verifies the following conditions.

\textbf{3.a.} There exists a positive constant $A>0$ such that for all $\varepsilon >0$, we have $\P(\X>\Mp-\varepsilon)\gg \varepsilon^A$ and $\P(\X<-\Mp+\varepsilon)\gg \varepsilon^A$.

\textbf{3.b.} For all integers $l\geq 0$, we have $\E[\X^{2l+1}] =0$.

\textbf{Assumption 4.} Tightness conditions with short sums. There exist absolute constants $\alpha\geq 1$ and $0<\delta<1/2$ such that for any interval $I$ of length $|I|\leq m^{1/2+\delta}$, one has
\[
\frac{1}{|\Omega_m|}\sum_{a\in \Omega_m} \left|
\frac{1}{\sqrt{m}}\sum_{n\in I} \varphi_a(n)\right|^\alpha \ll m^{-1/2-\delta}.
\]

\begin{rem}
Since the random variable is bounded, its distribution is uniquely determined by its moments (Hausdorff moment problem). Thus, Assumption 3.b. is equivalent to saying that the distribution of \(\X\) is symmetric, that is, $\X\sim -\X$.
\end{rem}

\begin{pro}\label{Pro Assumptions implication}
Let $m$ be large, and $\Fcal = \{\varphi_a\}_{a\in\Omega_m}$ be a family of $m$-periodic complex-valued functions satisfying one of the following subsets of the above assumptions.
\begin{enumerate}[label=\Alph*]
    \item Assumption 2 and Assumption 3 with $\eta>1$.

    \item Assumptions 1, 2 and Assumption 3 with $1\geq \eta>1/2$.

    \item Assumptions 1, 2, 4 and Assumption 3 with $ \eta=1/2$.
\end{enumerate}
Then we have Assumption (\hyperlink{Ass Tightness}{Tightness}).
\end{pro}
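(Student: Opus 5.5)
We must show that each of the sets A, B, C implies Assumption (Tightness): there exist $\alpha\geq1$ and $0<\delta<1/4$ such that
\[
\frac{1}{|\Omega_m|}\sum_{a}\max_{|I|\leq m^{1/2+\delta}}\Big|\frac{1}{\sqrt m}\sum_{n\in I}\varphi_a(n)\Big|^{\alpha}\ll m^{-\delta}.
\]
The plan is the same in all three cases. First discretize the maximum over intervals. Then bound the moment of a single short sum, either through its Fourier expansion combined with Assumption 3, or through Assumption 4 directly. Finally, sum over the polynomially many discretized intervals. We may shrink $\delta$ freely, since a smaller $\delta$ only weakens the requirement.

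\textbf{Discretization.} Any interval $I$ with $|I|\leq m^{1/2+\delta}$ is a difference of two initial segments $[0,mt]$ and $[0,mt']$ with $|t-t'|\leq m^{-1/2+\delta}$. Write $I=J\cup R$, where the endpoints of $J$ lie on the grid $m^{1/2}\Z$ and $R$ consists of at most two pieces of length $\leq m^{1/2}$.

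In cases B and C, Assumption 1 gives $\|\varphi_a\|_\infty\ll1$. The pieces of $R$ then contribute trivially only $O(1)$, which is not small enough, so we use a finer grid. Take the grid $m^{1/2-2\delta'}\Z$, with $\delta'$ small. The remainder is then $\ll m^{-2\delta'}$ trivially, and the number of grid intervals of length $\leq m^{1/2+\delta}$ inside $[0,m]$ is $\ll m^{1+3\delta'}$ (up to $\delta$ factors).

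In case A no pointwise bound is available. Instead write $\frac{1}{\sqrt m}\sum_{n\in I}\varphi_a(n)=\sum_h c_I(h)\widehat{\varphi_a}(h)$. Here $|c_I(h)|\leq\min(|I|/m,\,1/(2|h|))$, and $c_I$ depends Lipschitz-continuously on the endpoints with constant $\ll |h|/m$ per unit shift. Combined with $|\widehat{\varphi_a}|\leq \Mp$, rounding the endpoints to any grid of mesh $m^{1-\epsilon}$ changes the sum by $\ll m^{-\epsilon}\log m$.

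\textbf{Moment of a fixed short sum.} Fix a grid interval $I$ and an even integer $2k$. Expand $\big|\sum_h c_I(h)\widehat{\varphi_a}(h)\big|^{2k}$ and average over $a$ using Assumption 3, treating $h=0$ with the trivial bound $|c_I(0)|\leq|I|/m$.

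The main term is $\E\big|\sum_h c_I(h)\X(h)\big|^{2k}$. Since the $\X(h)$ are independent and symmetric, this is $\ll_k\big(\sum_h|c_I(h)|^2\big)^k$. We have $\sum_h|c_I(h)|^2=|I|/m\leq m^{-1/2+\delta}$, so the main term is $\ll_k m^{-k(1/2-\delta)}$.

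The error term is $m^{-\eta}\big(C_1\sum_h|c_I(h)|\big)^{2k}\ll m^{-\eta}(\log m)^{2k}$.

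In case A, where $\eta>1$, fix $k$ large. The number of intervals in a grid of mesh $m^{1-\epsilon}$ is $\ll m^{2\epsilon}$, so the union bound gives
\[
m^{2\epsilon}\big(m^{-k(1/2-\delta)}+m^{-\eta}(\log m)^{2k}\big)+m^{-2k\epsilon}(\log m)^{2k}.
\]
Choosing $\epsilon,\delta$ small makes this $\ll m^{-\delta}$ with $\alpha=2k$. Apply Hölder to pass from the sum over intervals to the maximum.

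In case B we need the finer grid of size $m^{1+3\delta'}$, against an error of $m^{-\eta}$ with $\eta>1/2$. This fails unless we use a dyadic chaining. Decompose $I$ into $O(\log m)$ dyadic blocks at scales $2^j$, from $m^{1/2-2\delta'}$ up to $m^{1/2+\delta}$. At scale $2^j$ there are $\ll m2^{-j}$ blocks. Each block has moment $\ll (2^j/m)^k+m^{-\eta}(\log m)^{2k}$. The sum over blocks at scale $2^j$ is therefore
\[
m2^{-j}\big((2^j/m)^{k}+m^{-\eta}(\log m)^{2k}\big),
\]
and at the smallest scale $2^j\approx m^{1/2-2\delta'}$ this is $\approx m^{1/2+2\delta'-\eta}$. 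This is $\ll m^{-\delta}$ once $\delta,\delta'<(\eta-1/2)/4$. The first term is small once $k\geq 2$. Apply Minkowski over the $O(\log m)$ scales.

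In case C, Assumption 4 gives, for a single interval, $\E|\cdot|^\alpha\ll m^{-1/2-\delta}$. The same dyadic chaining applies: at scale $2^j\leq m^{1/2+\delta}$ there are $\ll m2^{-j}$ blocks, and their total contribution is $\ll m^{1/2-2\delta'}\cdot$ hmm — this count is too large at small scales. So in case C I would instead take a grid of mesh $m^{1/2}$ only, with $\ll m^{1/2}$ intervals. This gives a contribution $\ll m^{1/2}\cdot m^{-1/2-\delta}=m^{-\delta}$. For the remainder pieces of length $\leq m^{1/2}$, rather than using the pointwise bound, control their maximum through the Fourier-side moment bound with $\eta=1/2$ together with Assumption 4 applied at scale $m^{1/2}$, where there are $\ll m^{1/2}$ such blocks. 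This is the step I expect to be the main obstacle: extracting a uniform saving $m^{-\delta}$ for the maximum over sub-$\sqrt m$ pieces when $\eta=1/2$ exactly. I would attempt it by combining Assumption 4 (with its exponent $\alpha$) with the interpolation $\min(1,\cdot)$ from Assumption 1, and by replacing $\delta$ with $\delta/2$ in the final statement.
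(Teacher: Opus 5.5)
\textbf{Case A has a genuine gap.} You claim that rounding the endpoints of $I$ to a grid of mesh $m^{1-\epsilon}$ changes $\frac{1}{\sqrt m}\sum_{n\in I}\varphi_a(n)$ by $\ll m^{-\epsilon}\log m$. This is false.

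\emph{Why it fails.} The change is the normalized sum over the discarded piece $R$, which can have length $\asymp m^{1-\epsilon}$. That is far longer than the intervals $|I|\le m^{1/2+\delta}$ you are trying to control, so this grid does not approximate them at all.
\begin{itemize}
\item Deterministically, $|c_R(h)|\le\min(|R|/m,1/(2|h|))$ and $|\widehat{\varphi_a}(h)|\le\Mp$ give only $\Mp\sum_{|h|<m/2}\min(m^{-\epsilon},1/(2|h|))\asymp\log m$. Moving an endpoint by one unit changes each $c_I(h)$ by $1/m$, but there are $\asymp m$ frequencies.
\item On average the claim fails too: your own main-term computation gives mean square $\asymp|R|/m\asymp m^{-\epsilon}$ for such $R$.
\item A telltale sign is that your union bound over $m^{2\epsilon}$ intervals only needs $\eta>2\epsilon$, so the hypothesis $\eta>1$ is never used.
\end{itemize}

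\emph{What is missing} is an exact multiscale decomposition with no remainder, reaching down to single points, so that no pointwise bound is needed. The paper takes $L=\lfloor m^\delta\rfloor$ and the families $\A(k)=\{[uL^k,vL^k]:|u-v|\le L\}$, $0\le k<N$, with $N$ bounded.
\begin{itemize}
\item Every interval of length $\le m^{1/2+\delta}$ is a union of $O(1)$ members of these families.
\item Each family has $\ll m^{1+\delta}$ members.
\item Each member has $\alpha$-th moment $\ll(\log m)^\alpha m^{-\eta}\ll m^{-1-2\delta}$. This comes from splitting the Fourier expansion at $m^\beta$, using the Lipschitz bound for small $|h|$ and \cite[Lemma 6.1]{ABL21} for large $|h|$.
\end{itemize}
The union bound then gives $m^{-\delta}$, and this is exactly where $\eta>1$ enters. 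Your own dyadic chaining from case B would also work here if you ran it down to scale $2^0=1$: there is then no remainder, and the bottom level contributes $\ll m\cdot m^{-\eta}(\log m)^{2k}$.

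\textbf{Case C is not proved.} You leave the sub-$\sqrt m$ pieces open, and the fallback you sketch miscounts. A grid of mesh $m^{1/2}$ carries $\asymp m^{1/2+\delta}$ intervals of length $\le m^{1/2+\delta}$, not $m^{1/2}$, so the union bound yields only $O(1)$. The end pieces of length $\le m^{1/2}$ are also only $O(1)$ pointwise.

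In fact the dyadic chaining you abandoned works:
\begin{itemize}
\item At scale $2^j\le m^{1/2+\delta}$ there are $\ll m2^{-j}$ blocks. By Assumption 4 their $\alpha$-th moments sum to $\ll m2^{-j}\cdot m^{-1/2-\delta}$, which at the bottom scale $2^j\asymp m^{1/2-\delta/2}$ is $\ll m^{-\delta/2}$.
\item The two end pieces of length $<m^{1/2-\delta/2}$ are $\ll m^{-\delta/2}$ by Assumption 1.
\item Minkowski in $L^\alpha$ (with $\alpha\ge1$) makes the sum over scales a geometric series dominated by the bottom scale.
\end{itemize}
This gives Tightness with exponent $\delta/2$, shrunk below $1/4$ if needed. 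The paper simply invokes \cite[Lemma 3.1]{ABL21} here.

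\textbf{Case B is correct but takes a different route from the paper.} The paper uses its short-interval moment lemma, with $\nu=1/2+\delta$ and $\delta=(\eta-1/2)/2$, to verify Assumption 4 and then falls back on case C. Your argument is self-contained: it runs the chaining directly with the per-block Fourier moment bound $(2^j/m)^k+m^{-\eta}(\log m)^{2k}$, and uses Assumption 1 for the remainders.
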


\begin{rem} 
The only difference between Assumption (\hyperlink{Ass Law}{Law}) and Assumptions $2$ and $3$ with $\eta = 1/2$ lies in the inclusion of the index $h=0$.
In \cite{ABL21}, the authors study the distribution of $\Im (\S_m(a,1/2))$ to obtain the estimate \eqref{eq ABL Nm(1/2,V)}, but
since by \eqref{S_m(a,t) fourier transform}, $\gamma_m(0,t) = \frac{\lfloor mt \rfloor+1}{m}\in \R$, the limiting distribution of $\Im (\S_m(a,1/2))$ does not reveal the term $\widehat{\varphi_a}(0)$ and hence the inclusion of the index $h=0$ becomes optional.
However, in our case, we want to construct the probabilistic model for all $t\in [0,1]$ and thus we need to know the  distribution of $\widehat{\varphi_a}(0)$ and its correlations with other Fourier coefficients.
\end{rem}

\begin{rem} \label{rem hat varphi_a(h) = hat varphi a-h(0)}
In all the examples of Section 1.2, the parameter of the family acts on the Fourier
transform by translation. Indeed, up to the $O(1)$ elements excluded from
$\Omega_p$, we have $\widehat{\varphi_a}(h) = \widehat{\varphi_{a-h}}(0)$ or $\widehat{\varphi_{a,b}}(h) = \widehat{\varphi_{a-h,b}}(0)$
or $\widehat{\varphi_{a,b}}(h) = \widehat{\varphi_{a,b-h}}(0)$. For such families the average
$\frac{1}{|\Omega_m|}\sum_{a \in \Omega_m} \widehat{\varphi_a}(h_1) \cdots
\widehat{\varphi_a}(h_k)$ only depends on the tuple $(h_1, \dots, h_k)$ up to a
global translation, so that singling out the index $h = 0$ would be artificial:
it is more natural to require \eqref{eq moment Ass Law} for all $k$-tuples in $(-m/2, m/2]^k$, as we do
in Assumption (\hyperlink{Ass Law}{Law}), rather than for tuples avoiding $0$ as in Assumption 3.
\end{rem}

\subsection{Quadratic Gauss sums}

Let \(\chi=\left(\frac{\cdot}{p}\right)\) be the quadratic character modulo \(p\). For all \(a\in \Omega_p=\F_p^\times\), we define
\[
\varphi_a(n):=
\frac{\sqrt p}{\tau(\chi)}
\left(\frac np\right)e_p(an),
\]
where
$\tau(\chi)=\sum_{n=1}^{p-1}\left(\frac np\right)e_p(n).$
In particular, the discrete Fourier transform is exactly
\[
\widehat{\varphi_a}(h)
=
\frac{1}{\tau(\chi)}
\sum_{n\in\Z/p\Z}
\left(\frac np\right)e_p((a-h)n)
=
\left(\frac{a-h}{p}\right)\in\{-1,0,1\}.
\]

This family satisfies Assumption (\hyperlink{Ass Law}{Law}). Indeed, take \(\Mp=1\), and let \(\X\) be a Rademacher random variable taking values in \(\{\pm1\}\). 
Moreover, for all $k\in \N$, $h_1,...,h_k\in\Z/p\Z$ distinct and $\alpha_1,...,\alpha_k\in \N^*,$
\begin{align*}
\frac{1}{|\F_p^*|}
\sum_{a\in\F_p^*} \prod_{i = 1}^k \widehat{\varphi_a}(h_i)^{\alpha_i}
&=\frac{1}{p-1}\sum_{a\in\F_p^*} \prod_{i = 1}^{k} \left(\frac{a-h_i}{p}\right)^{\alpha_i}\\
&=\frac{1}{p-1}\sum_{a\in\F_p^*}  \left(\frac{ \prod_{\substack{1\le i\le k\\ \alpha_i\text{ odd}}}a-h_i}{p}\right)
\end{align*}
If at least one of the $\alpha_i$ is odd, then the polynomial $P(X)=\prod_{\substack{1\le i\le k\\ \alpha_i\text{ odd}}}(X-h_i)$
is squarefree of degree at most $k$. Hence, by Weil’s bound for multiplicative character sums,
\begin{align*}
\frac{1}{|\F_p^*|}
\sum_{a\in\F_p^*} \prod_{i = 1}^k \widehat{\varphi_a}(h_i)^{\alpha_i}
&\left\{
\begin{array}{l cc l}
         =1 - O\left(\frac{1}{p }\right)  & \text{if}  & \forall i,\,\,2|\alpha_i,\\
     = O\left( \frac{k}{p^{1/2}}\right)& \text{if}  & \exists j,\,\,\alpha_j=1\,(\mod\, 2). 
\end{array}
\right.
\end{align*}
Moreover, we have the following Burgess estimate in \cite{Bur89} for any prime \(p\) and any \(N,H,a\),
$$
\sum_{N\leq n\leq N+H} \left(\frac{n}{p}\right)e_p(an) \ll  H^{2/3} p^{1/8+\varepsilon}. $$
Thus, Assumption (\hyperlink{Ass Tightness}{Tightness}) is verified with $\alpha = 4$ and $\delta =1/24$,
\[
\max_{|I|\leq p^{1/2+1/24}}\left|
\frac{1}{\sqrt{p}}\sum_{n\in I} \left(\frac{n}{p}\right)e_p(an)
\right|^{4} 
\ll_\varepsilon 
\frac{(p^{13/24})^{8/3}\cdot p^{1/2}\cdot p^{4\varepsilon}}{p^2}
\ll
p^{-1/24}.
\]

\subsection{Preliminary}

To prove Proposition~\ref{Pro Assumptions implication}, we rely on the following lemma. Its proof requires \cite[Lemma 6.1]{ABL21} under Assumptions~2 and~3 in full generality (with arbitrary \(\eta\)). The special case \(\eta=1/2\) under Assumption~(\hyperlink{Ass Law}{Law}) was stated earlier as Lemma~\ref{lemme 6.3}.

\begin{lem}\label{lem sum courte, alpha, eta, nu} Under Assumptions $2$ and $3$, let $\eta$ be as in Assumption $3$. Let $m$ be a large number. Let $\nu \in (0,1)$. Then there exists $\alpha >0$ such that for all intervals $I$ with $|I|\leq m^\nu$, one has
\[
\frac{1}{|\Omega_m|}\sum_{a\in \Omega_m} \left|
\frac{1}{\sqrt{m}}\sum_{n\in I} \varphi_a(n)\right|^\alpha \ll_{\alpha, \eta,\nu} \frac{(\log m)^\alpha}{m^\eta}.
\]  
\end{lem}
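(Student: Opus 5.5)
We will pass to Fourier space and apply a moment bound. By the discrete Plancherel formula, for an interval $I=(mt',mt]$ with $|I|\leq m^\nu$ we write
\[
\frac{1}{\sqrt m}\sum_{n\in I}\varphi_a(n)=\sum_{h\in\Z/m\Z}\big(\gamma_m(h,t)-\gamma_m(h,t')\big)\widehat{\varphi_a}(h).
\]
The coefficients $c(h):=\gamma_m(h,t)-\gamma_m(h,t')$ obey two bounds. By \eqref{Bound gamma p} we have $|c(h)|\leq 1/|h|$ for $1\leq|h|<m/2$. By Lemma \ref{lem gam lip and sum H<h<2H} we also have $|c(h)|\ll |I|/m\leq m^{\nu-1}$ for every $h$. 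Since Assumption 3 only concerns $h\neq0$, the indices $h=0$ and $h=m/2$ are handled trivially with Assumption 2: each contributes $O(\Mp m^{\nu-1})$, which is far below the target.

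Next we split the frequencies at $y:=m^{1-\nu}$. In the low range $1\leq |h|<y$ the Lipschitz bound is the good one, and the trivial estimate gives
\[
\sum_{|h|<y}|c(h)|\,|\widehat{\varphi_a}(h)|\ll \Mp\, y\, m^{\nu-1}\ll 1.
\]
This is not yet small, so the low range must also be controlled on average and not just pointwise. For this we compute its $2k$-th moment with \cite[Lemma 6.1]{ABL21} (or its analogue, Lemma \ref{Pro 2.2}) under Assumption 3 with exponent $\eta$. The probabilistic main term is $\E|\sum c(h)\X(h)|^{2k}$. Independence and symmetry of the $\X(h)$ (Assumption 3.b) let us bound it via Lemma \ref{Lemme 3.1}-type estimates by $(Ck\sum_{|h|<y}|c(h)|^2)^k\ll (Ck\, y\, m^{2\nu-2})^k=(Ck\,m^{\nu-1})^k$. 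The error term is $m^{-\eta}(C_1\sum|c(h)|)^{2k}\ll m^{-\eta}C^{k}$. For the high range $y\leq |h|<m/2$ we use the first bound in Lemma \ref{lemme 6.3}-type form with $c_0=1$. This gives a main term $(16\Mp^2k/y)^k=(Ck\,m^{\nu-1})^k$ and an error term $m^{-\eta}(4C_1\log m)^{2k}$.

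It then suffices to fix a single integer $k$ large in terms of $\eta$ and $\nu$, so that $(1-\nu)k\geq \eta$. With this choice both main terms are $\ll_k m^{-\eta}$, and the total $2k$-th moment is $\ll (\log m)^{2k}m^{-\eta}$. Taking $\alpha=2k$ and combining the two ranges by Minkowski's inequality (or $|x+y|^{\alpha}\leq 2^{\alpha}(|x|^\alpha+|y|^\alpha)$) yields the claimed bound $\ll (\log m)^\alpha m^{-\eta}$.

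The main obstacle is the bookkeeping for the general-$\eta$ version of \cite[Lemma 6.1]{ABL21}. We must make sure that the moment approximation is available with error $C_1^{2k}m^{-\eta}$ times $(\sum|c(h)|)^{2k}$ for a fixed $k$. We must also check that $\sum_{|h|<m/2}|c(h)|\ll\log m$, so that the $m^{-\eta}$ error term carries exactly the factor $(\log m)^{\alpha}$ appearing in the statement; the main terms are negligible once $k$ is fixed.
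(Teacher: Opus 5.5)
Your argument is correct, but it cuts the frequencies at a different point from the paper's proof, and so it treats the low range differently.

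\textbf{The paper's route.} The paper splits at $m^{\beta}$ with $\beta=\tfrac{2(1-\nu)}{3}$, strictly below the natural scale $m^{1-\nu}$. Then the Lipschitz bound of Lemma \ref{lem gam lip and sum H<h<2H} and Assumption 2 alone give the pointwise estimate
\[
\sum_{|h|<m^\beta}|c(h)\widehat{\varphi_a}(h)|\ll m^{\beta+\nu-1}=m^{-(1-\nu)/3}.
\]
So the low range, including $h=0$, needs no averaging at all. Only the high range goes through \cite[Lemma 6.1]{ABL21}, and the price is that $K$ must satisfy $\beta K>\eta$.

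\textbf{Your route.} You cut exactly at $y=m^{1-\nu}$, where the low-frequency sum is only $O(1)$ pointwise, so you need a second probabilistic input. That input is the general-$\eta$ moment comparison (the analogue of Lemma \ref{Pro 2.2} on $1\le|h|<y$), combined with a Khintchine/Hoeffding-type bound
\[
\E\Big|\sum c(h)\X(h)\Big|^{2k}\le \Big(Ck\sum|c(h)|^2\Big)^k,
\]
which uses the independence and mean zero of the $\X(h)$. Note that \cite[Lemma 6.1]{ABL21} in its stated form, with $|c(h)|\le c_0/|h|$ and $y=1$, would only give $O(1)$ on this range. What you actually need is the comparison lemma together with the $\ell^2$ bound. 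This works, since on that range $\sum_{1\le|h|<y}|c(h)|^2\ll m^{\nu-1}$ and $\sum|c(h)|\ll 1$.

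\textbf{Comparison.} Your route allows a slightly smaller exponent: $k(1-\nu)\ge\eta$ instead of $\tfrac{2}{3}(1-\nu)K>\eta$. But the lemma only asserts the existence of some $\alpha$, so this gain does not matter. The paper's lower cut-off shows the extra moment computation can be avoided entirely. In both versions the final bound is set by the $m^{-\eta}(\log m)^{2k}$ error term from the high range.
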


\begin{proof}Let \(I=[mt',mt]\) with \(|t-t'|\leq \frac{|I|}{m}\leq m^{\nu-1}\). Set 
\(
\beta=\frac{2(1-\nu)}{3} \) and \( \alpha=2K\in\mathbb{N}
\)
with \(K\) large enough so that \(\beta K>\eta\). 
Using the discrete Plancherel formula \eqref{S_m(a,t) fourier transform}, and removing the term \(h=m/2\) if it exists, we obtain
\begin{align*}
    &\frac{1}{|\Omega_m|}\sum_{a\in \Omega_m} \left|
\frac{1}{\sqrt{m}}\sum_{n\in I} \varphi_a(n)\right|^\alpha \\
=& \frac{1}{|\Omega_m|}\sum_{a\in \Omega_m} \left|
\sum_
{-m/2<h\leq  m/2}
(\gamma_m(h,t)-\gamma_m(h,t'))\widehat{\varphi_a}(h)
\right|^{2K}\\
\leq & \frac{2^{2K}}{|\Omega_m|}\sum_{a\in \Omega_m} \left|
\sum_{|h|< m^\beta}
(\gamma_m(h,t)-\gamma_m(h,t'))\widehat{\varphi_a}(h)
\right|^{2K}\\
+& \frac{4^{2K}}{|\Omega_m|}\sum_{a\in \Omega_m} \left|
{\sum_{m^\beta \leq|h|< m/2}}
(\gamma_m(h,t)-\gamma_m(h,t'))\widehat{\varphi_a}(h)
\right|^{2K}+  4^{2K}\left(
\frac{\Mp}{m/2}
\right)^{2K}.
\end{align*}
By \cite[Lemma 6.1]{ABL21}, since $ \beta K>\eta$, we have:
\begin{align*}
&\frac{1}{|\Omega_m|}
\sum_{a\in \Omega_m}
\left|
\sum_{m^\beta \leq|h|< m/2}
(\gamma_m(h,t)-\gamma_m(h,t'))\widehat{\varphi_a}(h)
\right|^{2K }\\ 
&\ll
\left(
\frac{16(c_0 \Mp)^2 K}{m^\beta}
\right)^K
+\frac{(4C_1 c_0\log m)^{2K}}{m^{\eta}}
\ll \frac{(\log m)^{2K}}{m^{\eta}}.
\end{align*}
Moreover, by Lemma \ref{lem gam lip and sum H<h<2H}, we obtain
\begin{align*}
\frac{1}{|\Omega_m|}
\sum_{a\in \Omega_m}
\left| 
\sum_{|h|< m^\beta }
(\gamma_m(h,t)-\gamma_m(h,t'))\widehat{\varphi_a}(h)
\right|^{2K }
&\leq
\frac{1}{|\Omega_m|}
\sum_{a\in \Omega_m}
\left(
\sum_{|h|< m^\beta }
\frac{\pi}{2}\left(|t-t'| +\frac{1}{m} \right)\Mp
\right)^{2K }\\
&\ll 
\left(
m^\beta 
m^{\nu-1} 
\right)^{2K }
\ll m^{-\eta}.
\end{align*}
Collecting the above bounds yields the result.
\end{proof}

\subsection{Proof of Proposition \ref{Pro Assumptions implication}}
\begin{proof}[Proof of the case $C$] This follows from \cite[Lemma 3.1]{ABL21} with $L = m^{1/2+\delta/4}$.
\end{proof}

\begin{proof}[Proof of the case $B$]
Let $\delta = \frac{\eta-1/2}{2}$. By Lemma \ref{lem sum courte, alpha, eta, nu}, with $\nu = 1/2+\delta$, there exists $\alpha\geq 1$ such that for every interval $I$ of length 
$|I|\leq m^{1/2 +\delta}$, we have
\[
\frac{1}{|\Omega_m|}\sum_{a\in \Omega_m} 
\left|
\frac{1}{\sqrt{m}}\sum_{n\in I} \varphi_a(n)\right|^\alpha 
\ll m^{-1/2-\delta}.
\]
Thus, we recover Assumption $4$ and hence Case \(C\), which was treated in \cite[Lemma 3.1]{ABL21}.
\end{proof}

\begin{proof}[Proof of the case $A$]
We directly show Assumption (\hyperlink{Ass Tightness}{Tightness}). Let $ 0<\delta \leq \min(1/4, \frac{\eta-1}{3})$.
Let $ \nu = \frac{1}{2}+\delta$ and $L = \lfloor  m^\delta \rfloor$, so there exists $N>0$ independent of $m$, such that $m^\nu\leq L^{N}$ and $N\delta = \nu' <1.$ Let us choose $\alpha$ by Lemma \ref{lem sum courte, alpha, eta, nu}, such that 
for all intervals $I$ with $|I|\leq m^{\nu'}$, we have
\begin{equation}\label{eq leq m (1+2delta)}
\frac{1}{|\Omega_m|}\sum_{a\in \Omega_m} \left|
\frac{1}{\sqrt{m}}\sum_{n\in I} \varphi_a(n)\right|^\alpha \ll_{\alpha, \eta,\nu'} \frac{(\log m)^\alpha}{m^\eta} \ll \frac{1}{m^{1+2\delta}}.
\end{equation}
Thus, we would like to move the maximum $\max_{|I|\leq m^{\nu}}$ inside the sum and for this, we can lose a factor $m^{1+\delta}$. So, we define the following sets $\A(k)$ of intervals for $0\leq k\leq N-1$, of the desired size $ |\A(k)| \ll m L \leq m^{1+\delta}$ namely
\[
\A(k):= \left\{ [uL^k ,vL^k] \subset [0, m] \,\Big|\, u,v\in\mathbb{N},\ |u-v|\leq L \right\}.
\]
Moreover, any interval \(I'\) with \(|I'|\leq L^{k+1}\) can be decomposed as
$
I' = I_k \sqcup J_k \sqcup I'_k,
$
where \(J_k\in \A(k)\) and \(|I_k|, |I'_k|\leq L^k\); this follows by aligning \(I'\) with the grid \(aL^k\). Thus, by the triangle inequality, we have
\begin{align*}
\left|\frac{1}{\sqrt{m}}\sum_{n\in I'} \varphi_a(n)\right|
&\leq 
\left|
\frac{1}{\sqrt{m}}\sum_{n\in I_k} \varphi_a(n)\right|
+ \left|
\frac{1}{\sqrt{m}}\sum_{n\in I'_k} \varphi_a(n)\right|
+ \left|\frac{1}{\sqrt{m}}\sum_{n\in J_k} \varphi_a(n)\right|\\
&\leq 
2\max_{|I|\leq L^k}
\left|
\frac{1}{\sqrt{m}}\sum_{n\in I} \varphi_a(n)\right|
+ \max_{J\in \A(k)}
\left|\frac{1}{\sqrt{m}}\sum_{n\in J} \varphi_a(n)\right|.
\end{align*}
 Hence, this gives 
\begin{align*}
\max_{|I| \leq L^{k+1}}\left|\frac{1}{\sqrt{m}}\sum_{n\in I} \varphi_a(n)\right|
&\leq 2\max_{|I| \leq L^{k}}
\left|
\frac{1}{\sqrt{m}}\sum_{n\in I} \varphi_a(n)\right| +
\max_{J\in \A(k)}
\left|\frac{1}{\sqrt{m}}\sum_{n\in J} \varphi_a(n)\right|.
\end{align*}
Since $m^\nu\leq L^{N}$ and $\{J\cap\N\subset [ 0, m], \, J \in \A(0) \} = \{I\cap\N\subset [ 0, m], \, |I| \leq L \} $, we deduce by induction on \(k_0\in\{0,\dots,N\}\) that
\begin{align*}
\max_{|I| \leq m^\nu}\left|\frac{1}{\sqrt{m}}\sum_{n\in I} \varphi_a(n)\right|
&\leq 
\max_{|I| \leq L^{N}}\left|\frac{1}{\sqrt{m}}\sum_{n\in I} \varphi_a(n)\right|\\
&\leq 2^{k_0} \max_{|I| \leq L^{N-k_0}}\left|\frac{1}{\sqrt{m}}\sum_{n\in I} \varphi_a(n)\right|
+\sum_{k =N- k_0}^{N-1}
2^{N-k-1}\max_{J\in \A(k)}
\left|\frac{1}{\sqrt{m}}\sum_{n\in J} \varphi_a(n)\right|\\
&\leq 
\sum_{k = 0}^{N-1} 2^{N-k-1}
\max_{J\in  \A(k)}
\left|\frac{1}{\sqrt{m}}\sum_{n\in J} \varphi_a(n)\right|.
\end{align*}
Moreover, since $N,\delta$ depend only on $\nu = 1/2+\delta$, we have the following bound
\begin{equation*}
\max_{|I| \leq m^\nu}\left|\frac{1}{\sqrt{m}}\sum_{n\in I} \varphi_a(n)\right|
\ll_\nu
\underset{0\leq k\leq N-1}{\max_{J\in \A(k)}}
\left|\frac{1}{\sqrt{m}}\sum_{n\in J} \varphi_a(n)\right|.
\end{equation*}
Taking moments, we obtain
\[
\frac{1}{|\Omega_m|}\sum_{a\in\Omega_m}
\max_{|I|\leq m^\nu}\left|
\frac{1}{\sqrt{m}}\sum_{n\in I} \varphi_a(n)
\right|^{\alpha } 
\ll
\sum_{k = 0}^{N-1}
\sum_{J\in \A(k)}
\frac{1}{|\Omega_m|}\sum_{a\in\Omega_m}
\left|\frac{1}{\sqrt{m}}\sum_{n\in J} \varphi_a(n)\right|^\alpha.
\] 
Since \(|\A(k)| \ll m^{1+\delta}\) and by \eqref{eq leq m (1+2delta)}, we conclude that
\[
\frac{1}{|\Omega_m|}
\sum_{a\in\Omega_m}
\max_{|I|\leq m^\nu}
\left|
\frac{1}{\sqrt{m}}\sum_{n\in I} \varphi_a(n)
\right|^{\alpha } 
\ll
\sum_{k = 0}^{N-1}
\sum_{J\in \A(k)}\frac{1}{m^{1+2\delta}}
\ll m^{-\delta},
\] as desired.
\end{proof}

\textsc{
Acknowledgments} The author would like to thank his Ph.D. advisor Youness Lamzouri for many helpful discussions concerning the works
\cite{ABL21},  \cite{Lam20},
\cite{LN24} and for the
useful suggestions and comments on this paper.

\end{document}